\theoremstyle{definition}
\newtheorem{dfn}{Definition}[section]
\newtheorem{rem}[dfn]{Remark}
\newtheorem{example}[dfn]{Example}
\theoremstyle{plain}
\newtheorem{thm}[dfn]{Theorem}
\newtheorem{prop}[dfn]{Proposition}
\newtheorem{prob}[dfn]{Problem}
\newcommand{\hpi}{\widehat{\pi}}
\newcommand{\Q}{\mathbb{Q}}
\newcommand{\Z}{\mathbb{Z}}
\newcommand{\calI}{\mathcal{I}}
\newcommand{\calM}{\mathcal{M}}
\newcommand{\calC}{\mathcal{C}}
\newcommand{\calT}{\mathcal{T}}
\newcommand{\calV}{\mathcal{V}}
\newcommand{\frakL}{\mathfrak{L}}
\newcommand{\frakM}{\mathfrak{M}}
\newcommand{\Aut}{\operatorname{Aut}}
\newcommand{\Hom}{\operatorname{Hom}}
\newcommand{\Der}{\operatorname{Der}}
\newcommand{\calG}{\mathcal{G}}
\newcommand{\filt}[1]{F^{ #1}}
\newcommand{\GL}{\mathbb{Q}\, \vert \pi \vert } 
\newcommand{\bGL}{\mathbb{Q}\, \vert\!\vert \pi \vert\!\vert}
\newcommand{\skein}{\mathcal{S}}
\newcommand{\cskein}{\widehat{\mathcal{S}} }
\newcommand{\hskein}{\mathcal{S}^{-1}}
\newcommand{\tskein}{\mathcal{A}}
\newcommand{\ctskein}{\widehat{\mathcal{A}}}
\newcommand{\tzeroskein}{\mathcal{A}_0 }
\newcommand{\torelli}{\mathcal{I}}
\newcommand{\zettaiti}[1]{\lvert #1 \rvert}
\newcommand{\zettaisq}[1]{\lvert\!\lvert #1  \rvert\!\rvert}
\newcommand{\gyaku}[1]{ #1^{-1}}
\newcommand{\arcsinh}{\mathrm{arcsinh}}
\newcommand{\arccosh}{\mathrm{arccosh}}
\begin{document}

\title{Generalized Dehn twists in low-dimensional~topology}

\author{Yusuke Kuno}
\address{Department of Mathematics, Tsuda University,
2-1-1 Tsuda-machi, Kodaira-shi Tokyo 187-8577, Japan}
\email{kunotti@tsuda.ac.jp}

\author{Gw\'ena\"el Massuyeau}
\address{ {IMB}, Universit\'e  Bourgogne Franche-Comt\'e \& CNRS, 21000 Dijon, France }
\email{gwenael.massuyeau@u-bourgogne.fr}

\author{Shunsuke Tsuji}
\address{Research Institute for Mathematical Sciences,
Kyoto University, 
Sakyo-ku, Kyoto, 606-8502, Japan }
\email{tsujish@kurims.kyoto-u.ac.jp}

\date{September 16th, 2019}

\subjclass[2010]{}
\keywords{}
\thanks{This paper is dedicated to {Vladimir  G. Turaev}, 
on the occasion of the conference ``\emph{Geometry, Topology of manifolds, and Physics}''
(Strasbourg, June 2018) in his honor.}

\begin{abstract}
The generalized Dehn twist along a closed curve in an oriented surface is an algebraic construction 
which involves intersections of loops in the surface.
It is defined as an automorphism of the Malcev completion of the fundamental group of the surface.
As the name suggests, for the case where the curve has no self-intersection, 
it is induced from the usual Dehn twist along the curve.
In this expository article, after explaining their definition, 
we review several results about generalized Dehn twists such as their realizability as diffeomorphisms of the surface, their diagrammatic description in terms of decorated trees and the Hopf-algebraic framework underlying their construction.
Going to the dimension three, we also overview the relation between generalized Dehn twists 
and  $3$-dimensional homology cobordisms, and we survey the variants of generalized Dehn twists  for skein algebras of the surface.
\end{abstract}

\maketitle

{ \setcounter{tocdepth}{1} \tableofcontents}

\section{Introduction}

Let $C$ be a simple closed curve in the interior of an oriented surface.
The (right-handed) \emph{Dehn twist along $C$}, denoted by $t_C$, is an orientation-preserving diffeomorphism of the surface obtained by cutting the surface along $C$, rotating, and gluing back:
\begin{equation} \label{eq:tC}
{\unitlength 0.1in%
\begin{picture}(29.1400,7.4200)(3.8600,-11.9700)%
%
\special{pn 13}%
\special{ar 900 500 600 200 0.5404195 2.6011732}%
%
\special{pn 13}%
\special{ar 900 1300 600 200 3.6820122 5.7427658}%
%
\special{pn 13}%
\special{ar 2699 500 600 200 0.5404195 2.6011732}%
%
\special{pn 13}%
\special{ar 2699 1300 600 200 3.6820122 5.7427658}%
%
\special{pn 13}%
\special{ar 900 900 50 200 4.7123890 1.5707963}%
%
\special{pn 13}%
\special{ar 2700 900 50 200 4.7123890 1.5707963}%
%
\special{pn 13}%
\special{pn 13}%
\special{pa 2700 1100}%
\special{pa 2691 1096}%
\special{fp}%
\special{pa 2676 1075}%
\special{pa 2672 1066}%
\special{fp}%
\special{pa 2664 1038}%
\special{pa 2661 1026}%
\special{fp}%
\special{pa 2656 996}%
\special{pa 2655 984}%
\special{fp}%
\special{pa 2652 953}%
\special{pa 2651 940}%
\special{fp}%
\special{pa 2650 908}%
\special{pa 2650 895}%
\special{fp}%
\special{pa 2651 862}%
\special{pa 2652 850}%
\special{fp}%
\special{pa 2654 818}%
\special{pa 2656 807}%
\special{fp}%
\special{pa 2661 776}%
\special{pa 2663 763}%
\special{fp}%
\special{pa 2671 736}%
\special{pa 2676 726}%
\special{fp}%
\special{pa 2690 704}%
\special{pa 2700 700}%
\special{fp}%
%
\special{pn 13}%
\special{pn 13}%
\special{pa 900 1100}%
\special{pa 889 1095}%
\special{fp}%
\special{pa 874 1070}%
\special{pa 869 1057}%
\special{fp}%
\special{pa 861 1025}%
\special{pa 859 1012}%
\special{fp}%
\special{pa 854 979}%
\special{pa 853 965}%
\special{fp}%
\special{pa 851 932}%
\special{pa 850 918}%
\special{fp}%
\special{pa 850 884}%
\special{pa 851 871}%
\special{fp}%
\special{pa 853 837}%
\special{pa 854 823}%
\special{fp}%
\special{pa 858 790}%
\special{pa 861 777}%
\special{fp}%
\special{pa 868 745}%
\special{pa 872 733}%
\special{fp}%
\special{pa 889 705}%
\special{pa 900 700}%
\special{fp}%
%
\special{pn 8}%
\special{pa 400 900}%
\special{pa 1400 900}%
\special{fp}%
%
\special{pn 8}%
\special{pa 2200 900}%
\special{pa 2760 900}%
\special{fp}%
%
\special{pn 8}%
\special{ar 2760 1020 40 120 4.7123890 6.2831853}%
%
\special{pn 8}%
\special{ar 2840 1020 40 86 1.5707963 3.1415927}%
%
\special{pn 8}%
\special{pn 8}%
\special{pa 2880 1020}%
\special{pa 2880 1030}%
\special{fp}%
\special{pa 2873 1070}%
\special{pa 2870 1078}%
\special{fp}%
\special{pa 2848 1104}%
\special{pa 2840 1106}%
\special{fp}%
%
\special{pn 8}%
\special{pn 8}%
\special{pa 2880 1020}%
\special{pa 2880 1012}%
\special{fp}%
\special{pa 2881 973}%
\special{pa 2881 965}%
\special{fp}%
\special{pa 2883 927}%
\special{pa 2884 919}%
\special{fp}%
\special{pa 2887 881}%
\special{pa 2888 873}%
\special{fp}%
\special{pa 2893 835}%
\special{pa 2894 828}%
\special{fp}%
\special{pa 2901 792}%
\special{pa 2903 785}%
\special{fp}%
\special{pa 2912 751}%
\special{pa 2914 744}%
\special{fp}%
\special{pa 2927 712}%
\special{pa 2930 706}%
\special{fp}%
\special{pa 2952 684}%
\special{pa 2960 682}%
\special{fp}%
%
\special{pn 8}%
\special{ar 3040 780 40 120 1.5707963 3.1415927}%
%
\special{pn 8}%
\special{ar 2960 780 40 100 4.7123890 6.2831853}%
%
\special{pn 8}%
\special{pa 3040 900}%
\special{pa 3200 900}%
\special{fp}%
\put(8.0000,-6.0000){\makebox(0,0)[lb]{$C$}}%
\put(15.0000,-10.0000){\makebox(0,0)[lb]{$\ell$}}%
\put(33.0000,-10.0000){\makebox(0,0)[lb]{$t_C(\ell)$}}%
\end{picture}}%
\end{equation}
Dehn twists appear in a number of basic constructions in low-dimensional topology. 
This mainly stems from the so-called ``Dehn--Lickorish theorem'' \cite{Dehn38, Lic62}, stating that 
Dehn twists give rise to generators for the mapping class group of a compact oriented surface.
For instance, the presentation of closed orientable 3-manifolds in terms of framed links in the 3-sphere \cite{Lic62,Ki78}
relies crucially on this fact.
For another instance, Dehn twists appear as monodromy around critical points of Lefschetz fibrations and thus provide 
a combinatorial approach to study this interesting class of $4$-manifolds; see, e.g., the survey article \cite{KS09}.
Compared with general elements of the mapping class group, Dehn twists are easy to handle since the support of $t_C$ lies in an annulus neighborhood of the curve $C$.
So, when one wants to understand a given element of the mapping class group, 
it is sometimes convenient to write it as a product of Dehn twists.

This article is aimed at giving a survey  on  ``generalized'' Dehn twists $t_{\gamma}$
 along \emph{non-simple} closed curves $\gamma$ in an oriented surface.
This construction has been  introduced in \cite{KK14}  and studied in \cite{Kuno13, MT13, KKgroupoid, KK16}.
It originates from the study of the action of ``usual'' Dehn twists
on the fundamental group of the surface, or more precisely, on its nilpotent quotients and its Malcev completion.
After explaining the definition of generalized Dehn twists, we present several results to illustrate how they are related to other objects in low-dimensional topology such as the mapping class group of a surface,  $3$-dimensional cobordisms over the surface, and skein algebras of the surface.

\subsection{Preliminary discussion}
\label{subsec:ridea}

Before giving a precise definition in Section~\ref{sec:Dtf}, 
let us explain how one is led to the generalized Dehn twist $t_\gamma$ along a closed curve $\gamma$.
First of all, notice that the cut-and-paste procedure 
illustrated by \eqref{eq:tC}
apparently does not work if the simple closed curve $C$ is replaced by a self-intersecting closed curve $\gamma$.
Rather, we focus on the action of usual Dehn twists on loops in the surface.

For definiteness and for simplicity, here and throughout, we only consider the case where the surface is a compact oriented connected surface 
$\Sigma := \Sigma_{g,1}$ of genus~$g$ with one boundary component.
Let $\pi := \pi_1(\Sigma,*)$ be the fundamental group of $\Sigma$ with basepoint $*$ chosen from the boundary $\partial \Sigma$.
Let $\mathcal{M}$ be the \emph{mapping class group} of $\Sigma$, namely the group of diffeomorphisms of $\Sigma$ 
fixing the boundary pointwise,
modulo isotopies fixing the boundary.
By the Dehn--Nielsen theorem, the action of $\mathcal{M}$ on $\pi$ induces a canonical isomorphism
\[
\mathcal{M} \overset{\cong}{\longrightarrow} {\rm Aut}_{\partial}(\pi),
\]
where ${\rm Aut}_{\partial}(\pi)$ is the group of automorphisms of $\pi$ fixing the based loop that is parallel to the boundary $\partial \Sigma$.

The action of a usual Dehn twist on $\pi$ is described explicitly as follows.
Let $C\subset {\rm Int}(\Sigma)$ be a simple closed curve.
If $\ell \colon [0,1] \to \Sigma$ is a based loop in $\Sigma$ which intersects $C$ in general position, then $t_C(\ell)$ is obtained by inserting a copy of $C$ (with a suitable orientation) at every intersection of $\ell$ with $C$.
To be more precise, give an orientation to $C$ and let $\ell \cap C = \{ p_1,\ldots,p_n\}$ be the intersection of $\ell$ and $C$, where $0< \ell^{-1}(p_1) < \ell^{-1}(p_2) < \cdots < \ell^{-1}(p_n) < 1$.
We denote by $\varepsilon_i \in \{ \pm 1\}$ the local intersection number at $p_i$ of the two oriented curves $\ell$ and $C$.
Then,
\begin{equation} \label{eq:tCl}
t_C(\ell) = \ell_{*p_1} (C_{p_1})^{\varepsilon_1} \ell_{p_1p_2} (C_{p_2})^{\varepsilon_2} \cdots \ell_{p_{n-1}p_n}(C_{p_n})^{\varepsilon_n} \ell_{p_n*} \in \pi
\end{equation}
where $\ell_{*p_1}$ is the subpath of $\ell$ from $*$ to $p_1$, 
$C_{p_1}$ is the closed curve $C$ based at $p_1$, and so on.
(Concatenation of paths is read from left to right.) 
Note that we need the orientation of $\Sigma$ to define $t_C$, but 
it is easily seen that the right hand side of \eqref{eq:tCl} is independent of the orientation of $C$.

Now let $\gamma \subset {\rm Int}(\Sigma) $ be any closed curve, \emph{with or without} self-intersection. 
A first naive trial to define $t_{\gamma}$ would be to use formula \eqref{eq:tCl} with replacing~$C$ by $\gamma$, but this does not work.
In fact, the right hand side of \eqref{eq:tCl} is not invariant under homotopy of $\ell$ 
when $\gamma$ has self-intersections.
Even if we neglect this fact and pretend that formula \eqref{eq:tCl} works, 
we cannot expect it to define a diffeomorphism of $\Sigma$.
This is because when $\ell$ is simple, the right hand side of \eqref{eq:tCl} may have non-trivial self-intersection
(arising from~$\gamma$), while any diffeomorphism of $\Sigma$ must preserve simple paths in $\Sigma$.

One outcome of this apparently hopeless situation is to consider, instead, the formal linear combination
\begin{equation} \label{eq:sigmaCl}
\sigma(C)(\ell) := \sum_{i=1}^n \varepsilon_i\, \ell_{*p_i} C_{p_i} \ell_{p_i*} \in \mathbb{Z} \pi,
\end{equation}
which one may view as a ``linearization'' of \eqref{eq:tCl}.
A key fact is that the right hand side is homotopy invariant even if we replace $C$ with any closed (oriented) curve $\gamma$. 
By linearity, any linear combination $u$ of 
free loops in $\Sigma$ gives an endomorphism $\sigma(u) \colon \mathbb{Z}\pi \to \mathbb{Z}\pi$, 
which turns out to be a derivation of the group ring $\mathbb{Z}\pi$.

The result in \cite{KK14, MT13, KK16} describes the action of $t_C$ on the group ring $\mathbb{Z}\pi$ 
as the exponential of a derivation of $\mathbb{Z}\pi$, 
which depends only on the homotopy class of $C$ and is built from the action \eqref{eq:sigmaCl}.
To be more specific, in order to work with the exponential, one has to replace $\Z \pi$ with the $I$-adic completion $\widehat{\Q \pi}$ of the group algebra $\Q \pi$, where $I$ denotes the augmentation ideal. 
Then the formula is 
\begin{equation} \label{eq:tCexp}
t_C = \exp \left( \sigma \big( \frac{1}{2} (\log C)^2 \big) \right)
\in {\rm Aut}(\widehat{\mathbb{Q}\pi}).
\end{equation}

Of course, the right hand side of \eqref{eq:tCexp}
makes sense if we replace $C$ with any closed curve $\gamma$.
Thus we define the \emph{generalized Dehn twist} along $\gamma$ to be
\begin{equation} \label{eq:tCexpgen}
t_{\gamma} := \exp \left( \sigma \big( \frac{1}{2} (\log \gamma)^2 \big) \right).
\end{equation}
In general, $t_{\gamma}$ does no longer preserve the fundamental group $\pi \subset \widehat{\mathbb{Q}\pi}$, but turns out to preserve the \emph{Malcev completion} $\widehat{\pi}$ of $\pi$.

\subsection{Organization}

This expository paper is organized as follows.

In Section~\ref{sec:Dtf}, we give the precise definition of a generalized Dehn twist~$t_{\gamma}$ along a closed curve $\gamma$.
It is defined as an element of the generalized mapping class group $\widehat{\calM} := {\rm Aut}_{\partial}(\widehat{\pi})$, i.e$.$ the group of automorphisms of $\widehat{\pi}$ that preserves the element corresponding to the boundary of $\Sigma$.
The group~$\widehat{\calM}$ naturally contains the mapping class group $\calM$ as a subgroup.

Section~\ref{sec:realize} is concerned with the question whether $t_{\gamma} \in \widehat{\calM}$ 
is induced by a diffeomorphism of $\Sigma$ or not, i.e$.$ $t_{\gamma} \in \calM$ or not.
In fact, as we will see, 
there are many examples of closed curves $\gamma$ for which the answer to this question is negative.

In Section~\ref{sec:diagram}, we give a diagrammatic description of generalized Dehn twists in terms of decorated trees 
whose leaves are colored by the first homology group of $\Sigma$.
Although we do not explicitly review this here, 
that point of view is closely related to the theory of the Johnson homomorphisms for the mapping class group \cite{Joh83, Mo93}.
Using this diagrammatic description, we show the following analogue of the Dehn--Lickorish theorem, which seems to be new:
the generalized mapping class group $\widehat{\calM}$ is topologically generated by (rational powers of) generalized Dehn twists.

In Section \ref{sec:Fox}, we present an approach of \cite{MT13} 
to generalize Dehn twists which is based on the notion of ``Fox pairing''.
This approach enables us to consider ``twists'' in a more algebraic setting.
We mention some examples arising elsewhere in topology, and which could be interesting for further investigation.

In Section~\ref{sec:hc}, we review recent results from \cite{KM19} 
about a (partial) topological interpretation of $t_{\gamma}$ in terms of $3$-dimensional surgery.
In more detail, given a closed curve $\gamma \subset {\rm Int}(\Sigma)$, 
we choose a knot resolution $K$  of $\gamma$ in $U:= \Sigma \times [-1, + 1]$, and perform a surgery along $K$ with a suitable framing.
We compare the resulting homology cobordism, denoted by $U_K$, 
with $t_{\gamma}$ through their respective actions on the Malcev completion $\widehat{\pi}$.

Finally, in Section~\ref{sec:skein}, we summarize some results from the series of papers 
\cite{TsujiCSAI, TsujiCSApure, TsujiTorelli, Tsujihom3, TsujiHOMFLY-PTskein}. 
We explain two variations of formula \eqref{eq:tCexp} 
that exist for the Kauffman bracket skein algebra, on the one hand, and for the HOMFLY-PT skein algebra, on the other hand. 
The resulting skein versions of generalized Dehn twists are related to \eqref{eq:tCexpgen} via some commutative diagrams.
We conclude by reviewing applications of this skein approach of  mapping class groups to the construction of topological invariants of homology 3-spheres.

\subsection{Acknowledgments}

The main tools used in the study of generalized Dehn twists are algebraic operations on loop spaces in an oriented surface
which are defined by intersection, or, self-intersection.
We would like to point out that Vladimir Turaev  already introduced in 1978 such kind of operations \cite{Tur78}:
these should be viewed as precursors of the so-called ``Goldman bracket'' \cite{Gol86} and ``Turaev cobracket'' \cite{Tur91},
as well as the loop action \eqref{eq:sigmaCl} that is discussed above.
Thus, we hope that the reader will be convinced that the framework of  generalized Dehn twists borrows much
to the influential works of Turaev in low-dimensional topology, from the late 1970's to nowadays.

This paper was finalized while the two first-named authors were visiting the CRM in Montr\'eal:
they thank the Simons Foundation for support.
Y.K. is supported by JSPS KAKENHI 18K03308;  G.M. is supported in part  by the  project ITIQ-3D, funded by the
``R\'egion Bourgogne Franche-Comt\'e.''
S.T. is supported by JSPS KAKENHI 18J00305 and the Research  Institute for Mathematical
Sciences, an International Joint Usage/Research Center located in Kyoto University.

\section{The Dehn twist formula and generalized Dehn twists}
\label{sec:Dtf}

In this section, we review the definition of generalized Dehn twists.
Recall that $\pi$ is the fundamental group of the surface $\Sigma = \Sigma_{g,1}$ with basepoint $*\in \partial \Sigma$.

\subsection{Malcev completion}

Let $I$ be the augmentation ideal of the group algebra $\Q \pi$.
An element of $I$ is a formal $\Q$-linear combination $\sum_{x\in \pi} a_x x$, where $a_x=0$ for all but finite $x$ and $\sum_{x\in \pi} a_x =0$.
The powers $\{ I^m \}_m$ define a multiplicative filtration of $\Q \pi$.
The \emph{$I$-adic completion} of $\Q \pi$ is the projective limit
\[
\widehat{\Q \pi} := \varprojlim_{m} \Q \pi /I^m.
\]
The powers of $I$ induce a natural filtration of $\widehat{\Q \pi}$ which we denote by $\{ \widehat{I^{m}}\}_m$.

There is also a canonical Hopf algebra structure on $\Q \pi$ whose coproduct is given by $\Delta(x) = x \otimes x$ 
for any $x\in \pi$, and this induces a complete Hopf algebra structure on $\widehat{\Q \pi}$.
The \emph{Malcev completion} $\widehat{\pi}$ of $\pi$ is defined to be the set of group-like elements in $\widehat{\Q\pi}$: 
\[
\widehat{\pi} := \big\{  x\in \widehat{\Q\pi} \mid x\neq 0, \Delta(x) = x \widehat{\otimes} x \big\}.
\]
There is a filtration of the group $\widehat{\pi}$ whose $m$th term is $\widehat{\pi}_m := {\widehat{\pi} \cap (1 + \widehat{I^m})}$.
Since $\pi$ is a free group of finite rank, the natural map $\Q \pi \to \widehat{\Q \pi}$ is injective, and so is the natural map $\pi \to \widehat{\pi}$.
Let $\pi ~=~\Gamma_1\pi~\supset~\Gamma_2\pi~\supset~\Gamma_3\pi\supset \cdots$ be the lower central series of the group $\pi$.
For each $m \ge 0$, the map $\pi \to \widehat{\pi}$ induces an injective group homomorphism
\begin{equation} \label{eq:Mal_k}
\pi/\Gamma_m \pi \longrightarrow \widehat{\pi}/\widehat{\pi}_m.
\end{equation}

\subsection{The action of free loops on based loops}

Let us recall the operation~$\sigma$ introduced in \cite{KK14}.
Let $\alpha$ be an (oriented) free loop in $\Sigma$ and $\beta$ a based loop, and assume that they intersect in transverse double points.
Set
\begin{equation} \label{eq:sigma}
\sigma(\alpha)(\beta) := \sum_{p\in \alpha \cap \beta}
\varepsilon_p\, \beta_{*p} \alpha_p \beta_{p*}
\in \Z \pi.
\end{equation}
Here, the sum is taken over all the intersections of $\alpha$ and $\beta$, $\varepsilon_p \in \{ \pm 1\}$ is the local intersection number of $\alpha$ and $\beta$ at $p$, and $\beta_{*p}$, $\alpha_p$, and $\beta_{p*}$ have the same meaning as in formula \eqref{eq:tCl}.
Extending by linearity, we obtain a $\Q$-linear map
\[
\sigma(u)\colon \Q \pi \longrightarrow \Q \pi
\]
for any $\Q$-linear combination $u$ of (homotopy classes of) free loops in $\Sigma$.
It~is in fact a derivation of $\Q \pi$: for any $v_1,v_2\in \Q \pi$,
\[
\sigma(u) (v_1v_2) = (\sigma(u)(v_1))\, v_2 + v_1\, (\sigma(u)(v_2)).
\]

Let $\alpha$ be a free loop in $\Sigma$.
For each $m\in \Z$, let $\alpha^m$ be the $m$th power of $\alpha$.
For any polynomial $f(x) \in \Q[x]$, the expression $f(\alpha)$ makes sense as a $\Q$-linear combination of free loops in $\Sigma$, 
so that the derivation $\sigma(f(\alpha)) \colon \Q\pi \to \Q\pi$ is defined.
As is proved in \cite{KKgroupoid,KK15,MT13}, it holds that 
\begin{equation} \label{eq:sigma-mn}
\sigma( (\alpha-1)^m ) (I^n) \subset I^{m+n-2}
\quad \text{for any $m,n \ge 0$}
\end{equation}
(with the convention that $\Q\pi = I^0=I^{-1}=I^{-2}$).
Therefore, for any power series $f(x) \in \Q[[x-1]]$, one can consider the derivation
\[
\sigma(f(\alpha)) \colon \widehat{\Q \pi} \longrightarrow \widehat{\Q \pi},
\]
which is continuous with respect to the filtration $\{ \widehat{I^m} \}_m$.

\begin{rem} \label{rem:operations}
\begin{enumerate}
\item
The operation $\sigma$ is a refinement of the Goldman bra\-cket \cite{Gol86} of two (homotopy classes of) free loops in $\Sigma$.
In order to give a precise statement, for a based loop $\alpha$ in $\Sigma$ denote by $|\alpha|$ the free homotopy class of $\alpha$.
(We also apply this convention to $\Q$-linear combinations of based loops in $\Sigma$.)
Let $\alpha$ and $\beta$ be based loops in~$\Sigma$.
Then the Goldman bracket $\big[ |\alpha|, |\beta| \big]$ of the two free loops $|\alpha|$ and $|\beta|$ 
is equal to $\big| \sigma( |\alpha|) (\beta) \big|$.
\item
The operation $\sigma$ itself has a refinement, 
which is called the \emph{homotopy intersection form} and denoted by $\eta\colon \Q\pi \times \Q\pi \to \Q\pi$.
This form, which can be regarded as a ``universal'' version of Reidemeister's equivariant  intersection pairings, 
is explicit in \cite{Tur78} and implicit in~\cite{Papa75}; see also~\cite{Per06,MT13}.
Let $\alpha$ and $\beta$ be immersed based loops in $\Sigma$ such that their intersections in the interior of $\Sigma$ consists of finitely many transverse double points, and in a neighborhood of the basepoint of $\Sigma$, they are arranged as shown in the following figure:
\begin{center}
\vskip 0.5em
{\unitlength 0.1in%
\begin{picture}(10.0000,3.7200)(4.0000,-8.0000)%
%
\special{pn 13}%
\special{pa 400 800}%
\special{pa 1400 800}%
\special{fp}%
%
\special{pn 4}%
\special{sh 1}%
\special{ar 900 800 16 16 0 6.2831853}%
%
\special{pn 8}%
\special{pa 900 800}%
\special{pa 700 640}%
\special{fp}%
%
\special{pn 8}%
\special{pa 900 800}%
\special{pa 1100 640}%
\special{fp}%
%
\special{pn 8}%
\special{pa 900 800}%
\special{pa 820 600}%
\special{fp}%
%
\special{pn 8}%
\special{pa 900 800}%
\special{pa 980 600}%
\special{fp}%
%
\special{pn 8}%
\special{pn 8}%
\special{pa 820 600}%
\special{pa 816 593}%
\special{fp}%
\special{pa 797 559}%
\special{pa 794 551}%
\special{fp}%
\special{pa 779 516}%
\special{pa 776 508}%
\special{fp}%
\special{pa 762 472}%
\special{pa 759 465}%
\special{fp}%
\special{pa 733 437}%
\special{pa 726 433}%
\special{fp}%
\special{pa 688 429}%
\special{pa 679 430}%
\special{fp}%
\special{pa 642 440}%
\special{pa 634 442}%
\special{fp}%
\special{pa 600 460}%
\special{pa 593 465}%
\special{fp}%
\special{pa 575 498}%
\special{pa 574 506}%
\special{fp}%
\special{pa 589 541}%
\special{pa 594 548}%
\special{fp}%
\special{pa 621 576}%
\special{pa 627 581}%
\special{fp}%
\special{pa 656 606}%
\special{pa 663 611}%
\special{fp}%
\special{pa 693 635}%
\special{pa 700 640}%
\special{fp}%
%
\special{pn 8}%
\special{pn 8}%
\special{pa 980 600}%
\special{pa 984 593}%
\special{fp}%
\special{pa 1003 559}%
\special{pa 1006 551}%
\special{fp}%
\special{pa 1021 516}%
\special{pa 1024 508}%
\special{fp}%
\special{pa 1038 472}%
\special{pa 1041 465}%
\special{fp}%
\special{pa 1067 437}%
\special{pa 1074 433}%
\special{fp}%
\special{pa 1112 429}%
\special{pa 1121 430}%
\special{fp}%
\special{pa 1158 440}%
\special{pa 1166 442}%
\special{fp}%
\special{pa 1200 460}%
\special{pa 1207 465}%
\special{fp}%
\special{pa 1225 498}%
\special{pa 1226 506}%
\special{fp}%
\special{pa 1211 541}%
\special{pa 1206 548}%
\special{fp}%
\special{pa 1179 576}%
\special{pa 1173 581}%
\special{fp}%
\special{pa 1144 606}%
\special{pa 1137 611}%
\special{fp}%
\special{pa 1107 635}%
\special{pa 1100 640}%
\special{fp}%
\put(8.6000,-9.2000){\makebox(0,0)[lb]{$*$}}%
\put(5.6000,-7.2000){\makebox(0,0)[lb]{$\alpha$}}%
\put(11.2000,-7.6000){\makebox(0,0)[lb]{$\beta$}}%
\end{picture}}%
\vskip 0.5em
\end{center}
Then,
\begin{equation} \label{eq:eta}
\eta(\alpha,\beta) := \sum_{p\in \alpha \cap \beta} \varepsilon_p\, \alpha_{*p}\beta_{p*}.
\end{equation}
How to reconstruct $\sigma$ from $\eta$ will be explained in Example \ref{ex:hif}.
\end{enumerate}
\end{rem}

\subsection{Logarithms of Dehn twists}

Consider now the power series
\[
L(x) := \frac{1}{2} (\log x)^2 \in \Q[[x-1]],
\]
where
$$
\log x = \sum_{n=1}^{\infty} \frac{(-1)^{n-1}}{n} (x-1)^n.
$$
Let $\gamma$ be an (unoriented) closed curve in $\Sigma$.
We pick an orientation of  
$\gamma$ but, for simplicity, we use the same letter $\gamma$ for the resulting free loop.
Since $L(x^{-1}) = L(x)$, the derivation $\sigma(L(\gamma)) \colon \widehat{\Q \pi} \to \widehat{\Q \pi}$ does not depend 
on this choice of orientation.

\begin{rem} \label{rem:piconj}
The set of conjugacy classes in $\pi$, denoted by $\vert \pi \vert$, is naturally identified 
with the set of free homotopy classes of loops in $\Sigma$.
Thus the $\Q$-linear span $\Q \vert \pi \vert$ is the underlying vector space for the Goldman bracket.
As was shown in \cite{KKgroupoid, KK15, MT13}, 
there is a natural filtration of $\Q \vert \pi \vert$ defined by using the filtration $\{ I^m \}_m$ of $\Q \pi$ 
and the canonical projection $\zettaiti{ -}\colon  \Q \pi \to \Q \vert \pi \vert$.
Then, the expression $L(\gamma)$ makes sense as an element of the completion of~$\Q \vert \pi \vert$ with respect to this filtration.
\end{rem}

\begin{thm}[\cite{KK14,MT13}] \label{thm:t_Cformula}
Let $C$ be a simple closed curve in ${\rm Int}(\Sigma)$. 
Then the exponential of the derivation $\sigma(L(C))$ converges, and it coincides with the action of the Dehn twist $t_C$ on $\widehat{\Q \pi}$:
\[
t_C = \exp \big(  \sigma(L(C)) \big)  \colon \widehat{\Q \pi} \longrightarrow \widehat{\Q \pi}.
\] 
\end{thm}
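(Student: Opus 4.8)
The plan is to compare the two maps as filtered algebra automorphisms of $\widehat{\Q\pi}$ and check them on generators. Both are continuous automorphisms of the complete Hopf algebra $\widehat{\Q\pi}$: the Dehn twist $t_C$ because it is induced by an automorphism of the group $\pi$, and $\exp(\sigma(L(C)))$ because the exponential of a continuous derivation is an algebra automorphism. Since $\pi$ topologically generates $\widehat{\Q\pi}$ as an algebra, it suffices to prove $t_C(\ell)=\exp(\sigma(L(C)))(\ell)$ for every based loop $\ell\in\pi$, which I put in transverse position with respect to $C$, meeting it at $p_1,\dots,p_n$ (in this order along $\ell$) with signs $\varepsilon_1,\dots,\varepsilon_n$.

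The first step is an explicit formula for the derivation on a based loop. Realizing the free loop $C^j$ by $j$ parallel push-offs of the simple curve $C$ joined in series, each push-off meets $\ell$ once near every $p_i$, and the loop it inserts is homotopic to $(C_{p_i})^{j}$; since powers of $C_{p_i}$ commute, the $j$ copies contribute equally and $\sigma(C^j)(\ell)=j\sum_i\varepsilon_i\,\ell_{*p_i}(C_{p_i})^{j}\ell_{p_i*}$. By linearity this gives, for any power series $f$, the formula $\sigma(f(C))(\ell)=\sum_i\varepsilon_i\,\ell_{*p_i}\,(\theta f)(C_{p_i})\,\ell_{p_i*}$, where $\theta f(x)=x f'(x)$ is the Euler operator. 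Specializing to $f=L$ and using $\theta L(x)=x\cdot(\log x)/x=\log x$ yields the clean expression
\[
\sigma(L(C))(\ell)=\sum_{i=1}^n\varepsilon_i\,\ell_{*p_i}(\log C_{p_i})\ell_{p_i*}=\Big(\sum_{i=1}^n\varepsilon_i\log\gamma_i\Big)\,\ell,
\]
where $\gamma_i:=\ell_{*p_i}C_{p_i}\ell_{*p_i}^{-1}\in\pi$ is the conjugate of $C$ based at $*$ through the initial arc of $\ell$. In the same way the twist formula \eqref{eq:tCl} rearranges as $t_C(\ell)=\gamma_1^{\varepsilon_1}\cdots\gamma_n^{\varepsilon_n}\,\ell$.

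Writing $D:=\sigma(L(C))$, the case $n=1$ is already decisive and exhibits the mechanism: the loop $\gamma_1=\ell_{*p_1}C_{p_1}\ell_{*p_1}^{-1}$ meets $C$ transversally nowhere, since the arc $\ell_{*p_1}$ has no interior intersection with $C$ and $C_{p_1}$ is parallel to $C$; hence $D(\gamma_1)=0$, so $D^k(\ell)=(\log\gamma_1)^k\,\ell$ and
\[
\exp(D)(\ell)=\exp(\log\gamma_1)\,\ell=\gamma_1\,\ell=t_C(\ell).
\]
As $\log\gamma_1\in\widehat{I^1}$, the series converges, which already settles the convergence claim in this case.

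For general $n$ I would argue by induction on the intersection number, or equivalently introduce the one-parameter family of ``fractional twists'' $\Phi_s(\ell):=\ell_{*p_1}C_{p_1}^{s\varepsilon_1}\cdots C_{p_n}^{s\varepsilon_n}\ell_{p_n*}=\gamma_1^{s\varepsilon_1}\cdots\gamma_n^{s\varepsilon_n}\,\ell$ and reduce the theorem to the flow identity $\tfrac{d}{ds}\Phi_s=D\circ\Phi_s$ together with $\Phi_0=\id$ and $\Phi_1=t_C$. Expanding $D(\Phi_s(\ell))$ by the Leibniz rule over the ordered product $\gamma_1^{s\varepsilon_1}\cdots\gamma_n^{s\varepsilon_n}\ell$ produces diagonal terms $D(\gamma_i^{s\varepsilon_i})$ besides $D(\ell)$, and matching them against $\frac{d}{ds}\Phi_s=\sum_i\gamma_1^{s\varepsilon_1}\cdots(\varepsilon_i\log\gamma_i)\gamma_i^{s\varepsilon_i}\cdots\ell$ is the noncommutative bookkeeping that constitutes the technical heart. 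Here two features of a \emph{simple} curve are essential, and I expect them to be the main obstacle to make fully rigorous. First, the convergence of $\exp(D)$ does not follow from filtration degrees alone, since \eqref{eq:sigma-mn} only shows that $D$ preserves the $I$-adic degree; it holds because each $\gamma_i$ is homologous into a parallel copy of $C$, so its algebraic intersection with $C$ vanishes, $\sum_{q\in\gamma_i\cap C}\varepsilon_q=0$, forcing $D(\gamma_i)\in\widehat{I^2}$ and hence $D^k(\ell)\in\widehat{I^k}$. Second, the commutators produced by reordering the factors $\gamma_i$ must reproduce exactly the cross-terms coming from the crossings of $\gamma_i$ with $C$, an identity ultimately governed by the homotopy intersection form $\eta$ of Remark~\ref{rem:operations}. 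Assembling these gives $\frac{d}{ds}\Phi_s=D\circ\Phi_s$, whence $\Phi_s=\exp(sD)$ and, at $s=1$, the desired equality $t_C=\exp(\sigma(L(C)))$.
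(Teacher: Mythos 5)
Your setup is sound and the first half of the argument is correct: the reduction to based loops, the formula $\sigma(f(C))(\ell)=\sum_i\varepsilon_i\,\ell_{*p_i}\big(xf'(x)\big)\big|_{x=C_{p_i}}\ell_{p_i*}$, the rewritings $\sigma(L(C))(\ell)=\big(\sum_i\varepsilon_i\log\gamma_i\big)\ell$ and $t_C(\ell)=\gamma_1^{\varepsilon_1}\cdots\gamma_n^{\varepsilon_n}\ell$, and the complete treatment of $n=1$ all check out. But the proof stops exactly where the content of the theorem begins: for $n\geq 2$ the flow identity $\tfrac{d}{ds}\Phi_s=\sigma(L(C))\circ\Phi_s$ is asserted rather than proved, and you acknowledge that the ``noncommutative bookkeeping'' is the main obstacle. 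The missing step is concrete and is the whole point. One must compute $\sigma(L(C))(\gamma_i)$ for $i\geq 2$: since $\gamma_i=\ell_{*p_i}C_{p_i}\overline{\ell_{*p_i}}$ meets $C$ in $2(i-1)$ points (near each $p_j$ with $j<i$, once on the outgoing arc with sign $\varepsilon_j$ and once on the return arc with sign $-\varepsilon_j$), the same Euler-operator computation you did for $\ell$ yields
\[
\sigma(L(C))(\gamma_i)=\Big[\,\sum_{j<i}\varepsilon_j\log\gamma_j,\ \gamma_i\Big],
\]
and then, writing $w_i:=\sum_{j<i}\varepsilon_j\log\gamma_j$, the Leibniz expansion of $\sigma(L(C))\big(\gamma_1^{s\varepsilon_1}\cdots\gamma_n^{s\varepsilon_n}\ell\big)$ telescopes (via $w_{i+1}=w_i+\varepsilon_i\log\gamma_i$, $w_1=0$) exactly to $\tfrac{d}{ds}\Phi_s(\ell)$. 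Without this commutator formula and the telescoping, nothing beyond $n\leq 1$ is established. The convergence discussion has the same status: the observation that $\sigma(L(C))(\gamma_i)\in\widehat{I^2}$ (the paired intersection points cancel to first order, reflecting $\omega([C],[C])=0$) is the right mechanism, since \eqref{eq:sigma-mn} alone only gives filtration preservation, but the induction carrying this to $\sigma(L(C))^k(\ell)$ is not spelled out.

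For what it is worth, the survey does not prove Theorem~\ref{thm:t_Cformula} (it is quoted from \cite{KK14,MT13}), and the proofs in those references avoid your global bookkeeping altogether: \cite{KK14} localizes to an annulus neighborhood of $C$, verifies the formula there for the fundamental groupoid of the annulus (essentially your $n=1$ computation), and transports it to $\Sigma$ by functoriality of $\sigma$ under inclusion of subsurfaces; \cite{MT13} works algebraically with the Fox pairing $\eta$ of Remark~\ref{rem:operations}, deducing convergence from the isotropy of $[C]$ (weak nilpotence) before comparing with $t_C$. Your route is viable and would yield a genuinely more hands-on proof, but as written it is an announced strategy with the decisive identity left unverified.
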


\subsection{Definition of a generalized Dehn twist} \label{subsec:gDt}

Let $\gamma$ be a closed curve in~$\Sigma$.
As was shown in \cite{KKgroupoid, KK15, MT13}, for any $\gamma$ the exponential of the derivation $\sigma(L(\gamma))$ converges and defines a filtration-preserving algebra automorphism of $\widehat{\Q \pi}$.
Theorem \ref{thm:t_Cformula} shows that when $\gamma$ is simple, $\exp(\sigma(L(\gamma)))$ is the action on $\widehat{\Q \pi}$ of the usual Dehn twist along $\gamma$.
In this case, $\exp(\sigma(L(\gamma)))\colon \widehat{\Q \pi} \to \widehat{\Q \pi}$ 
is clearly a Hopf algebra automorphism since it preserves $\pi$.
In fact, $\exp(\sigma(L(\gamma)))$ is a Hopf algebra automorphism for any closed curve $\gamma$ \cite[~\S 5]{MT13}, see also \cite[\S 5]{KK16}.
Thus, by restriction, $\exp(\sigma(L(\gamma)))$ can be regarded as an automorphism of the Malcev completion $\widehat{\pi}$ of $\pi$.
Furthermore, $t_{\gamma}$ preserves the boundary loop $\zeta \in \pi \subset \widehat{\pi}$ 
since $\sigma(\alpha)(\zeta) = 0$ for any free loop $\alpha$.

\begin{dfn}
The \emph{generalized Dehn twist along $\gamma$} is the automorphism of the complete Hopf algebra $\widehat{\Q \pi}$ defined by
\[
t_{\gamma}:= \exp \big(  \sigma(L(\gamma)) \big) 
\colon \widehat{\Q \pi} \longrightarrow \widehat{\Q \pi},
\] 
or equivalently, its restriction to the Malcev completion $\widehat{\pi}$.
\end{dfn}

\noindent
Note that, 
for any integer $n\ge 0$, it holds that $t_{\gamma^n} = (t_{\gamma})^{n^2}$.

\subsection{Action on the nilpotent quotients of $\pi$}
\label{subsec:anq}

We say that a closed curve  $\gamma$ in $\Sigma$ 
is of \emph{nilpotency class} $\ge k$
if its homotopy class in $\pi$ (after some arbitrary choices of orientation and basing arc) lies in $\Gamma_k \pi$.
The generalized Dehn twists act on $\widehat{\pi}$ and hence on the quotient $\widehat{\pi}/\widehat{\pi}_m$ for each $m\ge 0$.
It~will turn out in  Section \ref{sec:hc} that,
if $\gamma$ is of nilpotency class $\ge k$, the action of $t_{\gamma}$ on $\widehat{\pi}/\widehat{\pi}_{2k+1}$ preserves the image of the map \eqref{eq:Mal_k} with $m=2k+1$, and therefore, 
$t_{\gamma}$ acts on the free nilpotent group $\pi/\Gamma_{2k+1}\pi$.
In particular, $t_{\gamma}$~acts on $\pi/\Gamma_{2k}\pi$.
The following result describes this action in the manner of formula \eqref{eq:tCl}.

\begin{prop} \label{prop:quotient}
Let $\gamma \subset {\rm Int}(\Sigma)$ be a closed curve of nilpotency class~${\ge k}$ and give an orientation to $\gamma$.
Let $\ell\colon [0,1]\to \Sigma$ be a based loop which intersects $\gamma$ in general position, and let $\ell \cap \gamma = \{ p_1,\ldots,p_n \}$ with $\ell^{-1}(p_1) < \ell^{-1}(p_2)< \cdots < \ell^{-1}(p_n)$.
For each $i\in \{1,\ldots,n\}$, let $\varepsilon_i \in \{\pm 1\}$ be the sign of intersection of $\ell$ and $\gamma$ at $p_i$.
Then, the class $\{ \ell \}_{2k-1} \in \pi/\Gamma_{2k} \pi$ of $\ell \in \pi$
is mapped by $t_\gamma$ to
\begin{equation*} 
t_{\gamma}(\{ \ell\}_{2k-1} )
=
\ell_{*p_1} (\gamma_{p_1})^{\varepsilon_1} \ell_{p_1p_2} (\gamma_{p_2})^{\varepsilon_2}
 \cdots \ell_{p_{n-1}p_n}( \gamma_{p_n})^{\varepsilon_n} \ell_{p_n*}
 \in \frac{\pi}{\Gamma_{2k} \pi}.
\end{equation*}
\end{prop}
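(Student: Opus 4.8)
\emph{Strategy and reduction.} The plan is to verify the identity after projecting to $\Q\pi/I^{2k}$. Since $\pi$ is free, the Magnus embedding gives $\Gamma_{2k}\pi=\pi\cap(1+\widehat{I^{2k}})$, so the map $\pi/\Gamma_{2k}\pi\to\Q\pi/I^{2k}$ is injective; granting from Section~\ref{sec:hc} that $t_\gamma$ preserves the lattice $\pi/\Gamma_{2k}\pi$ inside $\widehat{\pi}/\widehat{\pi}_{2k}$, it suffices to show that $t_\gamma(\ell)$ and the word $w:=\ell_{*p_1}(\gamma_{p_1})^{\varepsilon_1}\cdots(\gamma_{p_n})^{\varepsilon_n}\ell_{p_n*}$ have the same class in $\Q\pi/I^{2k}$. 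I assume $k\ge 2$, so that $\gamma\in\Gamma_k\pi\subset\Gamma_2\pi$ is homologically trivial (the case $k=1$ is the classical transvection on $H_1(\Sigma)$ and is treated directly). Two inputs are used: each inserted loop satisfies $\gamma_p-1\in\widehat{I^k}$, and, crucially, $\sigma(\gamma)$ raises the $I$-adic filtration by $k$ rather than by $k-1$. Indeed, for a group element $b$ the degree-$k$ symbol of $\sigma(\gamma)(b)=\sum_q\varepsilon_q\,b_{*q}(\gamma_q-1)b_{q*}$ from \eqref{eq:sigma} equals $(\gamma\cdot b)\,\overline{\gamma}$, where $\gamma\cdot b$ is the homological intersection number and $\overline{\gamma}\in\Gamma_k\pi/\Gamma_{k+1}\pi$; this vanishes because $\gamma$ is null-homologous, whence $\sigma(\gamma)(\widehat{I^n})\subset\widehat{I^{n+k}}$.

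\emph{The right-hand side.} I would expand $w$ in $\Q\pi$ by writing each factor as $(\gamma_{p_i})^{\varepsilon_i}=1+\bigl((\gamma_{p_i})^{\varepsilon_i}-1\bigr)$ with $(\gamma_{p_i})^{\varepsilon_i}-1\in I^k$. Any monomial containing two or more such factors lies in $I^{2k}$ and is discarded, so modulo $I^{2k}$ only the constant and the one-factor monomials remain. Using $\gamma^{-1}-1\equiv-(\gamma-1)\pmod{I^{2k}}$ and $\sum_i\varepsilon_i=\gamma\cdot\ell=0$, the one-factor terms collapse to $\sum_i\varepsilon_i\,\ell_{*p_i}(\gamma_{p_i}-1)\ell_{p_i*}=\sigma(\gamma)(\ell)$, so that $w\equiv\ell+\sigma(\gamma)(\ell)\pmod{I^{2k}}$.

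\emph{The left-hand side.} I would expand $t_\gamma=\exp(\sigma(L(\gamma)))$. The filtration bound above holds verbatim for $\sigma(L(\gamma))$, since $L(\gamma)$ is a series in the free loops $\gamma^{j}$ and each $\sigma(\gamma^{j})$ has vanishing degree-$k$ symbol; hence $\sigma(L(\gamma))(\ell)\in\widehat{I^{k+1}}$ and $\sigma(L(\gamma))^2(\ell)\in\widehat{I^{2k+1}}\subset\widehat{I^{2k}}$, so only the linear term of the exponential survives: $t_\gamma(\ell)\equiv\ell+\sigma(L(\gamma))(\ell)\pmod{I^{2k}}$. To evaluate this term I would use $\sigma(\gamma^{j})(\ell)\equiv j^{2}\,\sigma(\gamma)(\ell)\pmod{I^{2k}}$, obtained by noting that $\gamma^{j}$ meets $\ell$ with $j$ preimages over each point of $\gamma\cap\ell$, each contributing the same based loop $\gamma_p^{\,j}\equiv1+j(\gamma_p-1)\pmod{I^{2k}}$, together with $\gamma\cdot\ell=0$. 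Since $L(x)=\tfrac12(\log x)^2=\tfrac12(x-1)^2-\tfrac12(x-1)^3+\cdots$, applying $\sigma(-)(\ell)$ term by term turns the coefficient of $(x-1)^m$ into the $m$-th finite difference of $j\mapsto j^{2}$ at $0$, equal to $2$ for $m=2$ and $0$ for $m\ge3$; the surviving $m=2$ term gives $\tfrac12\cdot2\cdot\sigma(\gamma)(\ell)$, so $\sigma(L(\gamma))(\ell)\equiv\sigma(\gamma)(\ell)\pmod{I^{2k}}$. Comparing the two paragraphs yields $t_\gamma(\ell)\equiv w\pmod{I^{2k}}$, as required.

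\emph{Main obstacle.} The delicate point is the top degree $I^{2k-1}$. A priori the quadratic term $\tfrac12\sigma(L(\gamma))^2(\ell)$ of the exponential lands only in $\widehat{I^{2k-1}}$ — the naive estimate \eqref{eq:sigma-mn} gives a shift of merely $k-1$ for the $m=2$ part of $L(\gamma)$ — and would then spoil the match. What saves the argument is the improved shift by $k$, i.e. the vanishing of the leading symbol $(\gamma\cdot-)\,\overline{\gamma}$ forced by the homological triviality of $\gamma$; this pushes the quadratic term into $\widehat{I^{2k}}$ and legitimizes the linear-term computation. I would therefore establish the refined estimate $\sigma(\gamma^{j})(\widehat{I^n})\subset\widehat{I^{n+k}}$ first and carefully, including the genuinely geometric point that, after putting $\ell$ and $\gamma$ in general position with $\ell\cap\gamma$ disjoint from the self-intersections of $\gamma$, all $j$ basings of $\gamma^{j}$ at a point of $\gamma\cap\ell$ yield the same element $\gamma_p^{\,j}$. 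That $t_\gamma$ is a Hopf-algebra automorphism, hence sends group-like elements to group-like elements, provides a consistency check on the surviving terms.
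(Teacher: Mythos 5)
Your proposal is correct and follows essentially the same route as the paper's proof: reduce to a congruence modulo $\widehat{I^{2k}}$, expand the right-hand word using that each inserted conjugate $\delta_j$ of $\gamma$ lies in $\Gamma_k\pi$ (so only the ``one-factor'' terms survive), and observe that modulo $\widehat{I^{2k}}$ only the linear term of the exponential and the leading quadratic term of $L$ contribute. The only differences are cosmetic: the paper evaluates the surviving term directly as $\sigma\big(\tfrac{1}{2}(\gamma-1)^2\big)(\ell)\,\ell^{-1}=\sum_j\varepsilon_j(\delta_j^2-\delta_j)$ instead of passing through $\sigma(\gamma)(\ell)$ and your finite-difference identity, and it bounds the higher terms of the exponential by viewing $L(\gamma)$ as lying in (the completion of) the image of $I^{2k}$ in $\Q\,\vert\pi\vert$ --- which already places $\sigma(L(\gamma))^2(\ell)$ in $\widehat{I^{4k-3}}\subset\widehat{I^{2k}}$ for $k\ge 2$, so the refined shift-by-$k$ estimate you single out as the ``main obstacle'', while true, is not actually required for that step.
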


\begin{proof}
Let $\ell'$ be the based loop in the right hand side of the above formula.
We need to prove that $t_{\gamma}(\ell)\, (\ell')^{-1} \in \widehat{\pi}_{2k}$.
This is equivalent to showing that $t_{\gamma}(\ell)\, \ell^{-1} \equiv \ell' \ell^{-1}$ modulo $\widehat{I^{2k}}$.
By assumption on $\gamma$, we have $\gamma-1 \in I^k$.
Using \eqref{eq:sigma-mn} and the fact that the leading term of $L(x)$ is $(x-1)^2/2$, we obtain
\[
t_{\gamma}(\ell)\, \ell^{-1} \equiv
1 + \sigma \left( \frac{1}{2}(\gamma-1)^2 \right)\! (\ell) \, \ell^{-1} \mod \widehat{I^{2k}}.
\]
Thus, the proof is reduced to showing that
\begin{equation} \label{eq:ell'ell}
\ell' \ell^{-1} -1 \equiv
\sigma\left( \frac{1}{2}(\gamma-1)^2 \right)\! (\ell) \, \ell^{-1}
\mod I^{2k}.
\end{equation}

To simplify notation, for each $j\in \{1,\ldots, n\}$ put $\delta_j:= \ell_{*p_j} (\gamma_{p_j}) \overline{\ell_{*p_j}}$
where $\overline{\ell_{*p_j}}$ denotes the reverse of the path $\ell_{*p_j}$.
Note that $\delta_j \in \Gamma_k \pi$ by our assumption on $\gamma$.
Then, on the one hand, we compute
\[
\ell' \ell^{-1} - 1 = \left( \prod_{j=1}^n \delta_j^{\varepsilon_j} \right) -1
= \sum_{j=1}^n \delta_1^{\varepsilon_1}\cdots \delta_{j-1}^{\varepsilon_{j-1}} (\delta_j^{\varepsilon_j} -1 )
\equiv \sum_{j=1}^n (\delta_j^{\varepsilon_j} -1),
\]
where the last equivalence is modulo $I^{2k}$.
On the other hand, we compute
\begin{align*}
\sigma\left( \frac{1}{2}(\gamma-1)^2 \right)\! (\ell) \, \ell^{-1}
&= \sigma\left( \frac{1}{2} \gamma^2 - \gamma \right)\! (\ell) \, \ell^{-1} \\
&= \left( \sum_{j=1}^n \varepsilon_j\, \ell_{*p_j} (\gamma_{p_j}^2 - \gamma_{p_j} ) \ell_{p_j *} \right) \ell^{-1} \\
&= \sum_{j=1}^n \varepsilon_j (\delta_j^2 - \delta_j).
\end{align*}
Finally, using again the fact that $\delta_j \in \Gamma_k \pi$, we have
$\varepsilon_j (\delta_j^2 - \delta_j) \equiv \varepsilon_j (\delta_j-1) \equiv (\delta_j^{\varepsilon_j}-1)$ modulo $I^{2k}$.
This proves \eqref{eq:ell'ell}.
\end{proof}

\section{Realizability as diffeomorphisms}
\label{sec:realize}

Let ${\rm Aut}_{\partial}(\pi)$ be the group of automorphisms of $\pi$ that fix the boundary element $\zeta$.
Then, by the Dehn--Nielsen isomorphism $\mathcal{M} \cong {\rm Aut}_{\partial}(\pi)$,
we can regard the mapping class group as a subgroup 
of the group of  automorphisms of  the filtered group $\hpi$ that fix the boundary element $\zeta \in \hpi$:
\begin{equation} \label{eq:form1}
 \widehat{\calM} := \Aut_\partial (\hpi) .
\end{equation}
We shall refer to $\widehat{\calM}$ as the \emph{generalized mapping class group} of $\Sigma$.
It can be equivalently defined as the group of automorphisms of the complete Hopf algebra $\widehat{\Q \pi}$ that preserve
the  homotopy intersection form $\eta\colon \widehat{\Q \pi} \times \widehat{\Q \pi} \to \widehat{\Q \pi}$:
\begin{equation} \label{eq:form2}
 \widehat{\calM} = \Aut_{\eta}\big(\widehat{\Q \pi} \big) .
\end{equation}
(See \cite[\S 8.1 \&  \S 10.3]{MT13} for the equivalence between the two definitions.)

As we have seen in Section \ref{subsec:gDt}, the generalized Dehn twists $t_{\gamma}$ are defined as elements in $\widehat{\calM}$.
We say that $t_{\gamma}$ is \emph{realizable as a diffeomorphism} if $t_{\gamma} \in \mathcal{M}$.

\begin{prob}
Given a closed curve $\gamma$ in $\Sigma$, determine whether $t_{\gamma}$ is realizable as a diffeomorphism or not.
\end{prob}

The following result generalizes the fact that the support of the usual Dehn twist $t_C$ is in an annulus neighborhood of $C$.

\begin{thm}[\cite{Kuno13,KKgroupoid}] \label{thm:locsupp}
Let $\gamma$ be an immersed closed curve in ${\rm Int}(\Sigma)$, and
suppose that $t_{\gamma}$ is realizable as a diffeomorphism.
Then there is an orientation-preserving diffeomorphism of $\Sigma$ which represents $t_{\gamma}$ and whose support lies in a regular neighborhood of $\gamma$.
\end{thm}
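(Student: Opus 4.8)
The plan is to begin with an arbitrary diffeomorphism realizing $t_\gamma$ and then push its support into a regular neighborhood of $\gamma$, exploiting the fact that $t_\gamma$ is algebraically trivial away from $\gamma$. Fix a regular neighborhood $N$ of $\gamma$, so that $\gamma \subset {\rm Int}(N)$ and $\partial N$ is a disjoint union of simple closed curves in ${\rm Int}(\Sigma)$, each \emph{disjoint} from $\gamma$; write $\Sigma' := \Sigma \setminus {\rm Int}(N)$ for the (possibly disconnected) complementary subsurface. Since $t_\gamma \in \calM$ by hypothesis, choose $\varphi \in \Diff(\Sigma,\partial\Sigma)$ representing it. After enlarging $N$ to absorb any disk bounded by a component of $\partial N$, we may assume every component of $\partial N$ is essential, so that $\Sigma'$ is incompressible.

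The first step is an algebraic localization: if $\ell$ is any based loop disjoint from $\gamma$, then for each $m$ the defining sum \eqref{eq:sigma} for $\sigma(\gamma^m)(\ell)$ is empty, hence $\sigma(L(\gamma))(\ell)=0$ and $t_\gamma(\ell)=\ell$ in $\hpi$, thus in $\pi$. Therefore $t_\gamma$ fixes the homotopy class of every loop that can be pushed off $\gamma$; in particular it fixes the free homotopy class of each component of $\partial N$ and induces the identity on the fundamental group of each component of $\Sigma'$ (based through arcs avoiding $\gamma$, with basepoints taken on $\partial N$ or at $\ast\in\partial\Sigma$). Next I would make $\varphi$ respect the splitting $\Sigma = N \cup_{\partial N} \Sigma'$: since $[\varphi]=t_\gamma$ fixes the isotopy class of each component of the multicurve $\partial N$, standard change-of-coordinates arguments for mapping classes preserving disjoint simple closed curves let me isotope $\varphi$ so that each component of $\partial N$ is preserved and fixed pointwise. (Mutually isotopic components of $\partial N$ that are a priori permuted by $\varphi$ are corrected by a diffeomorphism supported in $N$.) The restriction $\varphi' := \varphi|_{\Sigma'}$ is then a diffeomorphism of $\Sigma'$ fixing $\partial\Sigma'$ pointwise.

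The crucial step is to trivialize $\varphi'$. By the algebraic localization, $\varphi'$ induces the identity on $\pi_1$ of each component of $\Sigma'$. By the Dehn--Nielsen--Baer theorem for compact surfaces with boundary, the kernel of the action on $\pi_1$ (with basepoints on the boundary) is generated by Dehn twists about boundary-parallel curves, and the basepoint lying on $\partial\Sigma$ rules out any twist about a $\partial\Sigma$-parallel curve; hence $\varphi'$ is isotopic rel $\partial\Sigma'$ to a product $\theta$ of Dehn twists about curves parallel to the components of $\partial N$ only. Extending this isotopy of $\Sigma'$ by the identity on $N$, I may assume $\varphi$ agrees with $\theta$ on $\Sigma'$. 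Since $\theta$ is supported in an arbitrarily thin collar of $\partial N$ inside $\Sigma'$, the diffeomorphism $\varphi$ is then supported in $N$ together with that collar, which is itself a regular neighborhood $N^{+}$ of $\gamma$. This produces the desired representative of $t_\gamma$.

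I expect the main obstacle to be precisely this last step: bridging the purely algebraic vanishing of $\sigma(L(\gamma))$ on loops away from $\gamma$ and the topological conclusion that $\varphi'$ is isotopically trivial, while controlling the boundary-twist ambiguity so that it is absorbed by enlarging the neighborhood rather than contributing a genuinely non-local Dehn twist. The delicate bookkeeping lies in passing from Step~1's statement about \emph{based} loops (based at $\ast\in\partial\Sigma$) to an \emph{identity} automorphism of $\pi_1$ of each complementary piece, which requires careful tracking of basepoints and basing arcs across the components of $\Sigma'$, together with the incompressibility of $\Sigma'$ so that these subgroups inject into $\pi$ and the Dehn--Nielsen--Baer input applies.
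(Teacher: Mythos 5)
The survey states Theorem \ref{thm:locsupp} without proof, citing \cite{Kuno13,KKgroupoid}, so your attempt must be measured against those sources. Your skeleton is the right one and matches theirs in spirit: the intersection sum \eqref{eq:sigma} is empty on loops disjoint from $\gamma$, so $t_\gamma$ is ``algebraically local''; one normalizes $\varphi$ along $\partial N$, trivializes its restriction to the complement up to boundary twists, and absorbs those twists into a slightly larger regular neighborhood. But there is a genuine gap at the step you yourself flag as delicate, and it is not mere bookkeeping. Your localization lives entirely in $\pi_1(\Sigma,*)$ with $*\in\partial\Sigma$: it only controls based loops that are \emph{disjoint from $\gamma$}. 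For the component $\Sigma'_0$ of $\Sigma'$ containing $\partial\Sigma$ this suffices (every element of $\pi_1(\Sigma'_0,*)$ has such a representative, incompressibility gives injectivity into $\pi$, and the kernel of $\calM(\Sigma'_0,\partial)\to\Aut(\pi_1(\Sigma'_0,*))$ is generated by twists about the boundary components not containing $*$, i.e.\ about components of $\partial N$). But whenever the image of $\gamma$ separates $\Sigma$ --- already for a separating figure eight --- the far components $\Sigma'_i$ cannot be based ``through arcs avoiding $\gamma$'': every arc from $*$ to $\Sigma'_i$ must cross $\gamma$, and $t_\gamma$ does \emph{not} fix such an arc. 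What the single-basepoint operation actually yields (via the Goldman-bracket reduction of Remark \ref{rem:operations}) is only that $\varphi'$ preserves the $\pi$-conjugacy class of each element of $\pi_1(\Sigma'_i)$, not that it induces the identity automorphism; so the Dehn--Nielsen input you invoke (``the kernel of the action on $\pi_1$ is generated by boundary twists'') does not apply as stated.

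Closing this gap is precisely the content of the groupoid machinery of \cite{KKgroupoid}: one extends $\sigma$, hence $t_\gamma$, to the fundamental groupoid with basepoints placed on $\partial N$, where the vanishing of the intersection sum applies directly to \emph{paths} in $\Sigma'$ and gives the identity on each $\pi_1(\Sigma'_i,p_i)$ on the nose. If you insist on avoiding the groupoid, you need a chain of nontrivial substitutes: pass from $\pi$-conjugacy to $\pi_1(\Sigma'_i)$-conjugacy (which requires care for peripheral elements, e.g.\ when two components of $\partial N$ cobound an annulus in the complement), then invoke Grossman's theorem that pointwise-inner automorphisms of free groups are inner, then identify the kernel of $\calM(\Sigma'_i,\partial)\to\mathrm{Out}(\pi_1(\Sigma'_i))$ with the group of boundary twists. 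None of this is in your write-up. A smaller defect of the same flavour: a diffeomorphism ``supported in $N$'' is the identity on $\partial N$, so it cannot correct a permutation of mutually isotopic components of $\partial N$; that normalization also has to be argued differently.
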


It is conjectured that $t_{\gamma}$ is \emph{not} realizable as a diffeomorphism
 unless $\gamma$ is homotopic to a power of a simple closed curve \cite{Kuno13, KK16}.
The following result produces many examples of closed curves $\gamma$ such that $t_{\gamma} \notin \calM$.

\begin{thm}[\cite{KK15}] \label{thm:pioneinj}
Let $\gamma$ be an immersed non-simple closed curve in $ {\rm Int}(\Sigma)$ whose self-intersections consist of transverse double points.
If the inclusion homomorphism $\pi_1(N(\gamma)) \to \pi_1(\Sigma)$ is injective, where $N(\gamma)$ is a closed regular neighborhood of $\gamma$, then $t_{\gamma}$ is not realizable as a diffeomorphism.
\end{thm}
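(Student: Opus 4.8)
\emph{The plan is} to argue by contradiction, using the locality theorem (Theorem~\ref{thm:locsupp}) to push the whole problem into the subsurface $N := N(\gamma)$, and then to exhibit an obstruction there coming from the self-intersections of $\gamma$. So suppose $t_{\gamma} \in \calM$. By Theorem~\ref{thm:locsupp} there is an orientation-preserving diffeomorphism $\varphi$ of $\Sigma$, fixing $\partial \Sigma$ and supported in a closed regular neighborhood $N$ of $\gamma$, whose induced automorphism of $\hpi$ is $t_{\gamma}$.

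First I would localize the algebra to $N$. Since $\varphi$ is the identity outside $N$, it defines a mapping class of the compact subsurface $N$ relative to $\partial N$, hence an automorphism $\bar\varphi$ of the free group $\pi_1(N)$. The injectivity hypothesis lets me regard $\pi_1(N)$ as a subgroup of $\pi$ on which the two actions agree: for a based loop contained in $N$, the word produced by $\varphi$ lies again in $\pi_1(N) \subset \pi$ and equals $\bar\varphi$ applied to it. On the other hand, the operation $\sigma$ and the series $L$ only see intersections with $\gamma$, and $\gamma \subset \mathrm{Int}(N)$; hence the restriction of $t_{\gamma}$ to the completed group algebra of $\pi_1(N)$ is exactly the \emph{intrinsic} generalized Dehn twist $t_{\gamma}^N$ computed inside the surface $N$. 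Combining these, $t_{\gamma}^N$ is realized by the genuine mapping class $\bar\varphi$ of $N$. This reduces the theorem to the case in which $\gamma$ \emph{fills} its ambient surface, which is precisely the configuration in which the injectivity hypothesis is non-vacuous: it forbids $\gamma$ from being homotopic into a disk or a proper subsurface, i.e.\ from being secretly a power of a simple closed curve.

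Next I would detect the obstruction inside $N$ via self-intersections. The guiding principle is the remark in the introduction: when a simple loop $\ell$ crosses $\gamma$, the insertion formula for the twist produces a word carrying the self-crossings of $\gamma$, whereas a diffeomorphism must send simple loops to simple loops. The clean algebraic incarnation of ``being simple'' is the vanishing of the Turaev cobracket $\delta$ (equivalently, of the minimal self-intersection number): a genuine mapping class of $N$ preserves $\delta$, so $\bar\varphi$ does, and therefore $t_{\gamma}^N$ would have to preserve $\delta$ as well. I would then compute the defect $\delta \circ t_{\gamma}^N - (t_{\gamma}^N \otimes t_{\gamma}^N) \circ \delta$ in a suitable completed sense and show it is nonzero. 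Concretely, choosing a simple loop $\ell$ meeting $\gamma$ transversally and comparing $\delta(t_{\gamma}^N(\ell))$ with $t_{\gamma}^N(\delta(\ell))$, the leading discrepancy is a sum indexed by the double points of $\gamma$, which is nonzero exactly because $\gamma$ is non-simple and, by injectivity, these self-crossings cannot be undone by a homotopy in $N$.

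\emph{The main obstacle} is this last nonvanishing statement. At the coarse level guaranteed by Proposition~\ref{prop:quotient}, which describes $t_{\gamma}$ only modulo $\Gamma_{2k}\pi$ (hence only modulo $\Gamma_2 \pi$ when $[\gamma]\neq 0$ in homology), self-intersection is invisible, so one cannot simply read the contradiction off that formula; the argument must instead extract a genuinely quadratic invariant, of degree two in the self-intersection data, and verify that it survives. Making the cobracket computation precise---identifying the defect with a weighted count of the self-crossings of $\gamma$, and proving that the injectivity of $\pi_1(N) \to \pi$ prevents any cancellation---is the crux, and is exactly where the two hypotheses, non-simplicity and $\pi_1$-injectivity, enter in an essential way.
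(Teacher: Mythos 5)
Your overall strategy --- contradiction, Theorem~\ref{thm:locsupp} to confine a hypothetical realizing diffeomorphism to $N(\gamma)$, and then a Turaev-type self-intersection operation to produce the obstruction --- is exactly the route the paper indicates (it defers the details to \cite{KK15}, citing precisely Theorem~\ref{thm:locsupp} and ``a certain operation measuring self-intersections\dots essentially equivalent to the operation introduced by Turaev''). However, as written the argument has two genuine gaps. The first is in the localization step: from injectivity of $\pi_1(N)\to\pi$ you cannot formally conclude that the $I$-adic completion of $\Q\pi_1(N)$ injects into $\widehat{\Q\pi}$ --- completion is not left exact, and the comparison map already fails to be injective whenever $H_1(N;\Q)\to H_1(\Sigma;\Q)$ has a kernel, which can happen for an incompressible $N(\gamma)$ (e.g.\ when a boundary circle of $N(\gamma)$ separates $\Sigma$). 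Without injectivity of that comparison map, the commutative square relating $\varphi$ and $\bar\varphi$ does not let you identify $\bar\varphi$ with the intrinsic twist $t_\gamma^N$, so the reduction to the filling case needs an extra argument (or one should run the self-intersection computation in $\Sigma$ itself, which is closer to what \cite{KK15} does, using the $\pi_1$-injectivity only to rule out cancellations).

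The second and more serious gap is that the decisive step --- the nonvanishing of the defect of $t_\gamma$ with respect to the self-intersection operation --- is only announced, not proved. You rightly call it ``the crux'', but that crux \emph{is} the theorem: everything before it is a formal reduction and everything after it is routine, so the proposal as it stands establishes nothing beyond the setup. Two cautions for carrying it out: the unbased Turaev cobracket is a rather coarse detector of simplicity (it also vanishes on powers of simple curves, and its non-vanishing on general non-simple classes is delicate), and what \cite{KK15} actually uses is Turaev's \emph{based} self-intersection operation from \cite{Tur78}, applied to $L(\gamma)$ itself rather than to a test loop $\ell$; its leading term is an explicit sum over the double points of $\gamma$, and the hypothesis that $\pi_1(N(\gamma))\to\pi$ is injective is what prevents those contributions from cancelling. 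Until that computation is done and the no-cancellation claim is justified, the proof is incomplete.
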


The proof of this theorem uses Theorem \ref{thm:locsupp} and a certain operation measuring self-intersections of loops in $\Sigma$ which is essentially equivalent to the operation introduced by Turaev \cite{Tur78}.

\begin{example} 
Figure \ref{fig:two_examples} shows two examples of a closed curve $\gamma$ in the surface of genus two.
These examples are easily seen to satisfy the assumption of Theorem~\ref{thm:pioneinj}, and thus $t_{\gamma} \not\in \calM$.
The example in the left part is a \emph{figure eight}, i.e$.$ a closed curve with a single transverse double point.
In fact, if a figure eight~$\gamma$ is not homotopic to a simple closed curve, 
 nor to the square of a simple closed curve, then we can use Theorem \ref{thm:pioneinj} to conclude that $t_{\gamma} \not\in \calM$.
\begin{figure}[htb]
\begin{center}
{\unitlength 0.1in%
\begin{picture}(39.0000,8.0000)(2.0000,-13.0000)%
%
\special{pn 13}%
\special{ar 1700 900 100 400 0.0000000 6.2831853}%
%
\special{pn 13}%
\special{ar 500 900 300 400 1.5707963 4.7123890}%
%
\special{pn 13}%
\special{ar 4000 900 100 400 0.0000000 6.2831853}%
%
\special{pn 13}%
\special{ar 2800 900 300 400 1.5707963 4.7123890}%
%
\special{pn 13}%
\special{ar 600 900 150 150 0.0000000 6.2831853}%
%
\special{pn 13}%
\special{ar 1200 900 150 150 0.0000000 6.2831853}%
%
\special{pn 13}%
\special{ar 2900 900 150 150 0.0000000 6.2831853}%
%
\special{pn 13}%
\special{ar 3500 900 150 150 0.0000000 6.2831853}%
%
\special{pn 13}%
\special{pa 1700 1300}%
\special{pa 500 1300}%
\special{fp}%
%
\special{pn 13}%
\special{pa 1700 500}%
\special{pa 500 500}%
\special{fp}%
%
\special{pn 13}%
\special{pa 4000 1300}%
\special{pa 2800 1300}%
\special{fp}%
%
\special{pn 13}%
\special{pa 4000 500}%
\special{pa 2800 500}%
\special{fp}%
%
\special{pn 4}%
\special{sh 1}%
\special{ar 900 900 16 16 0 6.2831853}%
%
\special{pn 8}%
\special{ar 600 900 280 280 1.5707963 4.7123890}%
%
\special{pn 8}%
\special{pa 600 620}%
\special{pa 632 620}%
\special{pa 663 623}%
\special{pa 691 632}%
\special{pa 717 645}%
\special{pa 742 662}%
\special{pa 765 683}%
\special{pa 786 707}%
\special{pa 806 734}%
\special{pa 826 763}%
\special{pa 844 794}%
\special{pa 862 826}%
\special{pa 880 860}%
\special{pa 900 900}%
\special{fp}%
%
\special{pn 8}%
\special{pa 600 1180}%
\special{pa 632 1180}%
\special{pa 663 1177}%
\special{pa 691 1168}%
\special{pa 717 1155}%
\special{pa 742 1138}%
\special{pa 765 1117}%
\special{pa 786 1093}%
\special{pa 806 1066}%
\special{pa 826 1037}%
\special{pa 844 1006}%
\special{pa 862 974}%
\special{pa 880 940}%
\special{pa 900 900}%
\special{fp}%
%
\special{pn 8}%
\special{pa 1200 1180}%
\special{pa 1168 1180}%
\special{pa 1137 1177}%
\special{pa 1109 1168}%
\special{pa 1083 1155}%
\special{pa 1058 1138}%
\special{pa 1035 1117}%
\special{pa 1014 1093}%
\special{pa 994 1066}%
\special{pa 974 1037}%
\special{pa 956 1006}%
\special{pa 938 974}%
\special{pa 920 940}%
\special{pa 900 900}%
\special{fp}%
%
\special{pn 8}%
\special{pa 1200 620}%
\special{pa 1168 620}%
\special{pa 1137 623}%
\special{pa 1109 632}%
\special{pa 1083 645}%
\special{pa 1058 662}%
\special{pa 1035 683}%
\special{pa 1014 707}%
\special{pa 994 734}%
\special{pa 974 763}%
\special{pa 956 794}%
\special{pa 938 826}%
\special{pa 920 860}%
\special{pa 900 900}%
\special{fp}%
%
\special{pn 8}%
\special{ar 1200 900 280 280 4.7123890 1.5707963}%
%
\special{pn 8}%
\special{ar 2900 900 240 240 1.5707963 4.7123890}%
%
\special{pn 8}%
\special{pa 2900 1140}%
\special{pa 3500 1140}%
\special{fp}%
%
\special{pn 8}%
\special{pn 8}%
\special{pa 3650 900}%
\special{pa 3650 908}%
\special{fp}%
\special{pa 3649 944}%
\special{pa 3648 952}%
\special{fp}%
\special{pa 3644 987}%
\special{pa 3644 994}%
\special{fp}%
\special{pa 3638 1029}%
\special{pa 3636 1036}%
\special{fp}%
\special{pa 3628 1070}%
\special{pa 3626 1077}%
\special{fp}%
\special{pa 3616 1110}%
\special{pa 3613 1118}%
\special{fp}%
\special{pa 3600 1149}%
\special{pa 3597 1156}%
\special{fp}%
\special{pa 3580 1187}%
\special{pa 3577 1193}%
\special{fp}%
\special{pa 3558 1221}%
\special{pa 3553 1226}%
\special{fp}%
\special{pa 3532 1249}%
\special{pa 3528 1253}%
\special{fp}%
\special{pa 3503 1273}%
\special{pa 3497 1277}%
\special{fp}%
\special{pa 3468 1291}%
\special{pa 3461 1294}%
\special{fp}%
\special{pa 3428 1300}%
\special{pa 3420 1300}%
\special{fp}%
%
\special{pn 8}%
\special{ar 3420 900 230 400 1.5707963 3.1415927}%
%
\special{pn 8}%
\special{ar 3580 900 70 240 4.7123890 6.2831853}%
%
\special{pn 8}%
\special{pa 3580 660}%
\special{pa 2900 660}%
\special{fp}%
%
\special{pn 8}%
\special{ar 3500 860 280 280 4.7123890 1.5707963}%
%
\special{pn 8}%
\special{ar 3500 900 310 320 3.1415927 4.7123890}%
%
\special{pn 4}%
\special{sh 1}%
\special{ar 3300 660 16 16 0 6.2831853}%
%
\special{pn 4}%
\special{sh 1}%
\special{ar 3238 1140 16 16 0 6.2831853}%
\end{picture}}%
\end{center}
\caption{Closed curves $\gamma$ such that $t_{\gamma} \notin \calM$}
\label{fig:two_examples}
\end{figure}
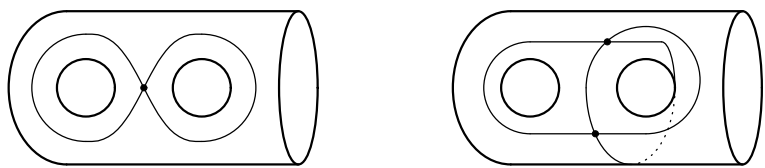
\end{example}

\section{Diagrammatic formulation of  generalized Dehn twists}
\label{sec:diagram}

Generalized Dehn twists have a useful description in terms of so-called ``Jacobi diagrams''.
In this section, we review this description and draw some consequences.

\subsection{Generalized Dehn twists as Lie  automorphisms}

Generalized Dehn twists have been defined in Section \ref{subsec:gDt} as automorphisms of the complete Hopf algebra $\widehat{\Q\pi}$
or, by restriction, as automorphisms of the Malcev completion $\widehat{\pi}$.
In some situations, however, it is appropriate to swap $\widehat{\pi}$ for its ``infinitesimal'' analogue,
namely the \emph{Malcev Lie algebra} $\frakM(\pi)$ of~$\pi$. 
The latter can be defined as the primitive part of $\widehat{\Q\pi}$:
$$
\frakM(\pi) := \big\{ x \in \widehat{\Q\pi}\, \vert\,  \Delta(x) =  x \widehat{\otimes} 1 + 1 \widehat{\otimes} x \big\}.
$$
Indeed,  the Malcev completion and the Malcev Lie algebra correspond one to the other
through the exponential and logarithm series
\begin{equation} \label{eq:correspondence}
1 + \widehat{ I^1  } \supset \widehat{\pi}
\xymatrix{
\ar@/^0.5pc/[rr]^-\log_-{\cong} &&  \ar@/^0.5pc/[ll]^-\exp
}
\frakM({\pi}) \subset  \widehat{ I^1  }
\end{equation}
which are defined, for all $u\in \widehat{ I^1  }$, by 
$$
\exp(u) =  \sum_{k=0}^{\infty}  \frac{u^k}{k!}
\quad \hbox{and} \quad
\log(1+u) =  \sum_{k=1}^{\infty}
(-1)^{k+1}\frac{u^k}{k},
$$
respectively.
For any closed curve $\gamma$ in $\Sigma$, the generalized Dehn twist $t_\gamma \in \Aut(\widehat{\Q\pi})$  
restricts to an automorphism of $\frakM(\pi)$, which preserves the element~$\log(\zeta)$.
We denote this restriction in the same way:
$$
 t_\gamma \in \Aut(\frakM(\pi)).
$$

\begin{rem}
We are using the same notation for three different kinds of automorphisms:
$$
\text{(i)} \ t_\gamma  \in \Aut(\widehat{\Q\pi}), \quad \text{(ii)} \ t_\gamma  \in \Aut(\widehat{\pi}), \quad 
\text{(iii)} \  t_\gamma  \in \Aut(\frakM({\pi})).
$$
To sum up: each of (ii) and (iii) are restrictions of (i); furthermore,
(ii) and (iii) correspond each other through the correspondence \eqref{eq:correspondence}.
\end{rem}

Next, to make this Lie algebra automorphism $t_\gamma$ more concrete,
one can swap the Malcev Lie algebra $\frakM(\pi)$ for the free Lie algebra 
$$
\frakL:= \frakL(H)
$$
generated by $H:=H_1(\Sigma;\Q)$. But, this can not be done in a canonical way 
and requires a notion that has been coined in \cite{Ma12}
as a ``symplectic expansion'' of the free group $\pi$.
Recall that $\frakL$ is the primitive part of the tensor algebra $T(H)$ generated by $H$,
and that $\frakL$ is a graded Lie algebra:
$$
\frakL = \bigoplus_{j=1}^{\infty} \frakL_j
\quad \hbox{ where } \frakL_1=H.
$$
Since the homology intersection form $\omega \colon H \times H \to \Z$ of the oriented surface~$\Sigma$ is skew-symmetric and non-degenerate,
it defines a duality $H \cong H^*$  by $x\mapsto \omega(x,-)$: hence we regard $\omega$ as an element of 
$$
\Lambda^2 H^* \cong \Lambda^2 H \cong  \frakL_2.
$$
Then, a \emph{symplectic expansion} of $\pi$ is defined as a map
$$
\theta\colon \pi \longrightarrow \widehat{T}(H)
$$
with values in the degree-completion of $T(H)$,
which is multiplicative, maps the boundary element $\zeta$ to $\exp(-\omega)$  and satisfies
$$
 \theta(x) = \underbrace{1 + [x]+ (\deg\geq 2)}_{\hbox{\small group-like}}
$$
for all $x\in \pi$ with homology class  $[x]\in H$.
 The condition that $\theta(x)$ is group-like is equivalent to requiring that $\log \theta(x)$ lies in the degree-completion of~$\mathfrak{L}$.
Symplectic expansions  are easily proved to exist \cite[Lemma 2.16]{Ma12},
and some instances can be constructed in an explicit combinatorial way  \cite{Ku12}.
\emph{In this section, we  fix a symplectic expansion~$\theta$.} 

This map $\theta\colon \pi \to \widehat{T}(H)$ can be extended, by linearity and continuity, 
to a complete Hopf algebra isomorphism $\theta\colon \widehat{\Q \pi} \to \widehat{T}(H)$, 
which restricts to a complete Lie algebra  isomorphism $\theta\colon \frakM(\pi) \to \widehat{\frakL}$. 
Hence, for any closed curve $\gamma$ in $\Sigma$,
the generalized Dehn twist $t_\gamma$ can be equivalently considered~as 
$$
t_\gamma^\theta :=   \theta  \circ t_\gamma \circ  \theta^{-1}  \in \Aut(\widehat{\frakL}).
$$

\subsection{The Lie algebra of symplectic derivations}

Let $\gamma$ be a closed  curve in $\Sigma$ and consider now the logarithm
$$
\log(t_\gamma^\theta) =  \theta  \circ \log(t_\gamma) \circ  \theta^{-1}
=  \theta  \circ \sigma(L(\gamma)) \circ  \theta^{-1} 
$$
which is a derivation of $\widehat{\frakL}$  vanishing on $\omega \in \frakL_2$.
The set consisting of all derivations of $\frakL$ 
that  vanish on $\omega \in \frakL_2$ is stable under the usual Lie bracket of derivations. 
It is called the \emph{Lie algebra of symplectic derivations} and denoted by  $$\Der_\omega( \frakL).$$
This Lie algebra  can also be regarded as a subspace of $\Hom(H,\frakL)$
(since any derivation is determined by its restriction to $H$)
and, therefore, it can be regarded as a subspace  of  $H \otimes \frakL$ (using the  duality $H \cong H^*$).
From this viewpoint, 
$\Der_\omega(\frakL)$ is the kernel of the Lie bracket $H \otimes \frakL \to \frakL_{\geq 2}$.
Note that $\Der_\omega(\frakL)$ is a graded Lie algebra, where a derivation is homogeneous of degree $k$ 
if and only if it increases the degree of $\frakL$ by $k$.

It is well-known that the above-mentioned graded Lie algebras have diagrammatic descriptions, which we now recall.
First of all, note that a linear combination~$T$
of planar binary rooted trees  with $j$ leaves colored by $H$ defines  an element $\operatorname{comm}(T)\in \frakL_j$. For instance:\\[0.1cm]
$$
\operatorname{comm}\Bigg(\begin{array}{c}
\labellist \small \hair 2pt
\pinlabel {$h_1$} [b] at 2 162
\pinlabel {$h_2$} [b] at 111 162
\pinlabel {$h_3$} [b] at 167 162
\pinlabel {$h_4$} [b] at 222 162
\pinlabel {root} [t] at 108 5
\endlabellist
\includegraphics[scale=0.3]{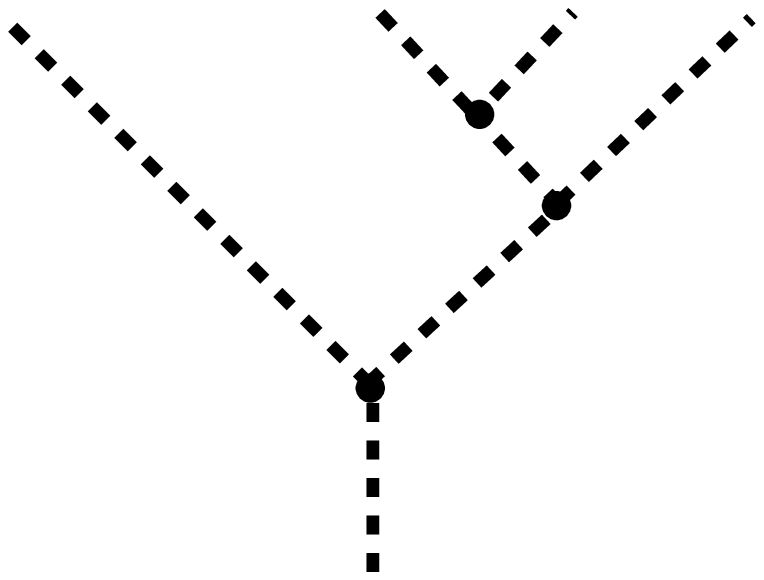} 
\end{array}\Bigg) =  [h_1,[[h_2,h_3],h_4]]
\quad \quad  (\hbox{with } h_1,\dots,h_4 \in H).
$$
A \emph{Jacobi diagram} is a graph with only univalent  and trivalent vertices, the latter being assumed to be  oriented
(i.e$.$ half-edges are cyclically ordered around each trivalent vertex); it is said  \emph{$H$-colored} if
all  its univalent vertices are colored by~$H$. Its \emph{degree} is the number of its trivalent vertices.
In the sequel, we always assume that Jacobi diagrams are finite, tree-shaped and connected. 
For example, here is an $H$-colored Jacobi diagram of degree~$3$
(where, by convention, vertex orientations are given by the trigonometric orientation of the plan):\\[0.cm]
$$
{\labellist \small \hair 2pt
\pinlabel {$h_1$} [tr] at 0 0
\pinlabel {$h_2$} [br] at 0 59
\pinlabel {$h_3$} [b] at 69 76
\pinlabel {$h_4$} [bl] at 138 63
\pinlabel {$h_5$} [tl] at 135 0
\endlabellist}
\includegraphics[scale=0.3]{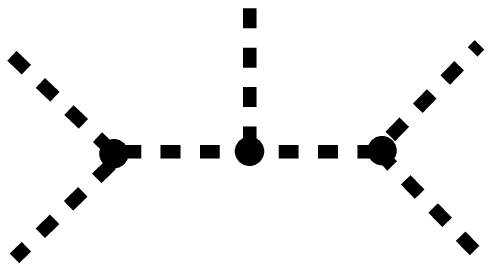}
\quad \quad \quad \quad  (\hbox{with } h_1,\dots,h_5 \in H)
$$
\vspace{-0.3cm}

\noindent
Let
$$
\calT = \bigoplus_{d=0}^{\infty} \calT_d
$$ 
be the graded $\Q$-vector space of $H$-colored Jacobi diagrams modulo the \emph{AS}, 
\emph{IHX} and \emph{multilinearity} relations,
which are the local relations shown below: \\[0.2cm]

\begin{center}
\labellist \small \hair 2pt
\pinlabel {AS} [t] at 102 -5
\pinlabel {IHX} [t] at 543 -5
\pinlabel {multilinearity} [t] at 1036 -5
\pinlabel {$= \ -$}  at 102 46
\pinlabel {$-$} at 484 46
\pinlabel {$+$} at 606 46
\pinlabel {$=0$} at 721 46 
\pinlabel {$+$} at 1106 46
\pinlabel {$=$} at 961 46
\pinlabel{$h_1+h_2$} [b] at 881 89
\pinlabel{$h_1$} [b] at 1042 89
\pinlabel{$h_2$} [b] at 1170 89
\endlabellist
\centering
\includegraphics[scale=0.3]{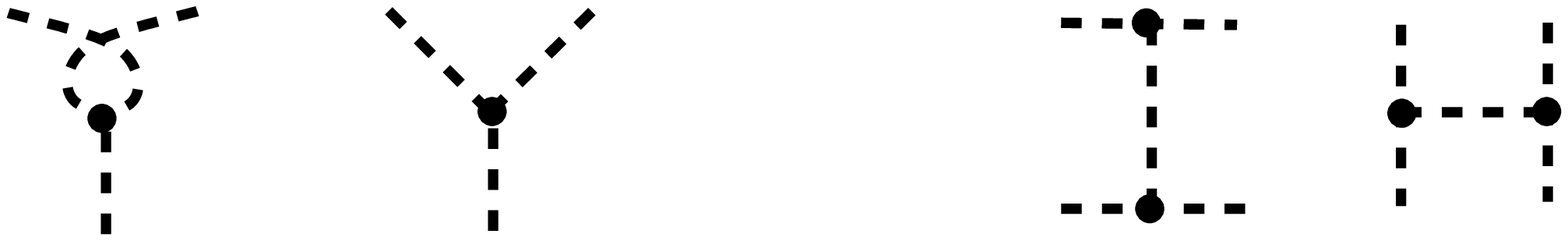}
\end{center}
\vspace{0.5cm}
There is a graded linear map
$
\Xi \colon \calT  \rightarrow 
\Der_\omega( \frakL) \subset H\otimes \frakL
$
which is defined, for any $H$-colored Jacobi diagram $T$, by
$$
\Xi(T) := \sum_{v} \operatorname{col}(v) \otimes \operatorname{comm}(T_v).
$$
Here the sum is over all univalent vertices $v$ of $T$, $ \operatorname{col}(v) $ denotes the element of $H$ carried by $v$
and $T_v$ is the  tree $T$ rooted at $v$.
The  map $\Xi$ is known to be an isomorphism
(see, for instance, \cite{HP}).

\begin{example}
In degree $0$, $\Xi$ maps any diagram of the form $h\hbox{\textbf{\,-\! -\! -}\,} k$ 
to $h \otimes k + k \otimes h$.
Indeed, as a subspace of $\Hom(H,\frakL)$, 
the degree $0$ part of $\Der_\omega( \frakL)$ corresponds to 
$$
\big\{u \in \Hom(H,H) \mid 
(u \otimes \operatorname{id} + \operatorname{id} \otimes u)(\omega) =0 \in \Lambda^2 H \subset H^{\otimes 2} \big\};
$$
therefore, as a subspace of $H \otimes \frakL$, this  corresponds to the kernel of the canonical projection $H \otimes H \to \Lambda^2H$,
i.e$.$ to the symmetric part  of $H\otimes H$.
\end{example}

The isomorphism $\Xi^{-1}$ transports the Lie bracket of derivations to the following Lie bracket in~$\calT$:
for any $H$-colored Jacobi diagrams $T$ and $T'$, 
\begin{equation}
\label{eq:Lie}
[T,T'] = \sum_{v,v'} \omega(v,v')\cdot T_v \hbox{\textbf{\,-\! -\! -}\,}  T_{v'}
\end{equation}
where the sum is over all univalent vertices $v$ and $v'$ of $T$ and $T'$, respectively,
and $T_v \hbox{\textbf{\,-\! -\! -}\,}  T_{v'}$ is obtained by gluing ``root-to-root'' $T_v$ and $T_{v'}$.

\subsection{A diagrammatic formula for generalized Dehn twists}

Recall that, in this section, we have fixed a symplectic expansion $\theta$ of $\pi$.

\begin{thm}[\cite{KM19}] \label{thm:diagram}
For any  closed curve $\gamma$ in $\Sigma$, we have
\begin{equation} \label{eq:symmetric}
\Xi^{-1}\big(\log(t_\gamma^\theta)\big)= \frac{1}{2} \cdot\log \theta ([\gamma]) \hbox{\textbf{\,-\! -\! -}\,}   \log \theta ([\gamma])
\end{equation}
meaning that $\Xi^{-1}\big(\log(t_\gamma^\theta)\big)$  is half the series of Jacobi diagrams
obtained by  gluing ``root-to-root''  two copies of the series of planar  rooted trees $\log \theta ([\gamma])$.
\end{thm}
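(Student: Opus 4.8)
The plan is to exploit that $\Xi$ is an isomorphism onto $\Der_\omega(\frakL)$ and that $\log(t_\gamma^\theta)=\theta\circ\sigma(L(\gamma))\circ\theta^{-1}$ already lies in (the degree-completion of) $\Der_\omega(\frakL)$. Thus it suffices to verify the identity of derivations $\log(t_\gamma^\theta)=\Xi\big(\tfrac12\, g\hbox{\textbf{\,-\! -\! -}\,} g\big)$, where I abbreviate $g:=\log\theta([\gamma])\in\widehat{\frakL}$. Since a derivation of the free Lie algebra is determined by its restriction to the generators $H=\frakL_1$, I would reduce the whole statement to checking that the two derivations agree on $h\in H$; this is what the remaining steps compute.

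The key input is an explicit description of the \emph{transported} operation $\theta\circ\sigma(\alpha)\circ\theta^{-1}$ for a single free loop $\alpha$, obtained from the homotopy intersection form $\eta$ (see \eqref{eq:eta}) together with the reconstruction of $\sigma$ from $\eta$ (Example \ref{ex:hif}). The point I would stress is that in the defining sum $\sigma(\alpha)(\beta)=\sum_p\varepsilon_p\,\beta_{*p}\alpha_p\beta_{p*}$ the loop $\alpha$ enters \emph{twice}: once through the intersection pairing with $\beta$ (which, after expansion, is governed by $\omega$ and carries a single contraction), and once through the inserted copy $\alpha_p$. Consequently, after conjugating by the symplectic expansion---which, by \cite{MT13}, linearizes $\eta$ into the standard $\omega$-pairing---the transported operator becomes a fixed expression in $g_\alpha:=\log\theta(\alpha)$, in which $g_\alpha$ occurs through two separate slots: schematically the intersecting copy supplies one rooted tree, the inserted copy the other, and the two are joined along the edge dual to the $\omega$-contraction, i.e.\ glued root-to-root.

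With this formula in hand, the quadratic normalization in the statement emerges from a clean power-sum identity. Writing $L(\gamma)=\sum_k c_k\gamma^k$ with $\sum_k c_k x^k=\tfrac12(\log x)^2$ and using $\log\theta(\gamma^k)=k\,g$, I would expand $\theta\circ\sigma(L(\gamma))\circ\theta^{-1}=\sum_k c_k\,\theta\circ\sigma(\gamma^k)\circ\theta^{-1}$ into pieces homogeneous in $g$. A contribution homogeneous of total degree $m$ in $g$ acquires the scalar $\sum_k c_k k^m$, and setting $t=\log x$ gives $\sum_k c_k k^m=\frac{d^m}{dt^m}\big(\tfrac12 t^2\big)\big|_{t=0}=\delta_{m,2}$. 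Hence every contribution except the one quadratic in $g$ drops out, and the surviving term is exactly the bidegree-$(1,1)$ part, namely $\Xi$ applied to the root-to-root gluing of two copies of $g$, with overall factor $\tfrac12$. I would cross-check the normalization in degree $0$: there $g=[\gamma]=:h_0$, the right-hand side acts on $h\in H$ by $h\mapsto\omega(h_0,h)\,h_0$ (using $\Xi(h_0\hbox{\textbf{\,-\! -\! -}\,} h_0)=h_0\otimes h_0+h_0\otimes h_0$ from the Example), which matches the logarithm of the homological transvection induced by $t_\gamma$; the scaling $k^2$ is likewise consistent with $t_{\gamma^n}=(t_\gamma)^{n^2}$.

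The main obstacle is the key lemma of the second step: pinning down the precise $\omega$-contraction formula for $\theta\circ\sigma(\alpha)\circ\theta^{-1}$ and identifying its bidegree-$(1,1)$ component with the diagrammatic root-gluing, with the correct symmetry factor. This is where the geometry is genuinely used---one must show that, after the symplectic expansion, ``insert a full copy of $\alpha$'' becomes one rooted-tree factor $g$ while ``intersect $\alpha$ transversally with the argument'' becomes the second factor $g$, the two being joined precisely along the edge carrying the single intersection $\omega$. Everything else (the reduction to $H$ and the power-sum bookkeeping) is then formal.
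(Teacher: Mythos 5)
Your skeleton is the same as the paper's: everything is made to rest on the formal (tensorial) description of $\sigma$ under a symplectic expansion, which is exactly the ingredient the survey defers to \cite[Theorem 1.2.2]{KK14} and \cite[\S 10]{MT13}, and which you also leave unproved and flag as the main obstacle. Granting that lemma, your reduction to the generators $H$ and your degree-$0$ cross-check against the transvection and against $t_{\gamma^n}=(t_\gamma)^{n^2}$ are fine. However, two of your intermediate claims are not correct as stated. The transported operator $\theta\circ\sigma(\alpha)\circ\theta^{-1}$ for a \emph{single} loop $\alpha$ is not ``a fixed expression in $g_\alpha=\log\theta(\alpha)$ occurring through two slots'': what the lemma actually gives is an operator \emph{linear} in the cyclic word $|\theta(\alpha)|=|\exp(g_\alpha)|$, with a single $\omega$-contraction against the argument $h\in H$. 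Expanding the exponential, this operator has homogeneous components of \emph{every} degree $m$ in $g_\alpha$ (coming from $\tfrac{1}{m!}|g_\alpha^{m}|$), and only the $m=2$ component has the root-to-root form; in the defining sum $\sum_p\varepsilon_p\,\beta_{*p}\alpha_p\beta_{p*}$ the intersection contributes a sign, not a second copy of $g_\alpha$. The two copies of $\log\theta([\gamma])$ in the theorem come from $L(\gamma)$ being quadratic in $\log\gamma$, not from an ``intersecting copy'' versus ``inserted copy'' dichotomy.

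Your power-sum step is implicitly compensating for this (it discards all $m\neq 2$), but it introduces a second problem: the rearrangement $\tfrac12(\log x)^2=\sum_k c_k x^k$ is not legitimate in $\Q[[x-1]]$, since each $c_k$ is a divergent series; the identity $\sum_k c_k k^m=\delta_{m,2}$ would have to be justified by truncating modulo powers of the augmentation ideal and interchanging sums degree by degree. Both issues disappear if you use the key lemma as it is actually stated: since $\theta$ is multiplicative, $\theta(L(\gamma))=\tfrac12\big(\log\theta(\gamma)\big)^2=\tfrac12\,g\cdot g$ directly, so the cyclic word fed into the tensorial description is $\tfrac12|g^2|$, whose associated derivation is precisely $\Xi\big(\tfrac12\, g\hbox{\textbf{\,-\! -\! -}\,} g\big)$ with the factor $\tfrac12$ transparent. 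With that replacement your argument becomes the paper's.
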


Formula \eqref{eq:symmetric} is shown in \cite[Proof of Theorem 5.1]{KM19}  for a null-homologous closed curve $\gamma$, 
but the proof extends verbatim to any closed curve.
This proof of  \eqref{eq:symmetric} is based on a formal description of the loop operation $\sigma$ using the symplectic expansion $\theta$.
(See \cite[Theorem 1.2.2]{KK14} and also \cite[~\S 10]{MT13}.) 

We now mention some consequences of \eqref{eq:symmetric} as for the ``leading terms'' of generalized Dehn twists.
Let $\frakL^\Z$ be the Lie ring freely generated by $H_1(\Sigma;\Z)$ and regard $\frakL^\Z$ as a subset of~$\frakL$.
Let ${k\geq 1}$ be an integer.
 Recall from Section \ref{subsec:anq} that a   closed curve~$\gamma$ in $\Sigma$ is  \emph{of nilpotency class $\geq k$} if, 
when it is oriented and connected to  $*$  by an arc (in an arbitrary way),
its homotopy class $[\gamma] \in \pi$  belongs to $\Gamma_k \pi$.
Then,
$$
\theta([\gamma])= 1 + \{\gamma\}_{k} + (\deg\geq k+1)
$$
where $\{\gamma\}_{k}\in \frakL_k\subset H^{\otimes k}$ corresponds to the class of $\gamma$ modulo $\Gamma_{k+1} \pi$
through the canonical isomorphism $ \frakL_k^\Z \cong \Gamma_k \pi/ \Gamma_{k+1} \pi$.

\begin{prop} \label{prop:one_two} 
\begin{enumerate}
\item[(i)]
Let $\gamma$ be a closed curve  of nilpotency class~$\geq k$ in~$\Sigma$. Then
$$ \qquad \qquad 
\Xi^{-1}\big(\log(t_\gamma^\theta)\big) = 
\frac{1}{2}\{\gamma\}_{k} \hbox{\textbf{\,-\! -\! -}\,} \{\gamma\}_{k} +(\deg \geq 2k-1).
$$
\item[(ii)]
Let $\gamma_-,\gamma_+$ be closed curves in $\Sigma$ of nilpotency class $\geq k$ such that $\{\gamma_-\}_{k}=\{\gamma_+\}_{k}$.
Then 
$$
\qquad \quad  \Xi^{-1}\big(\log\big( (t_{\gamma_-}^\theta)^{-1}\, t_{\gamma_+}^\theta\big) \big) = 
\{\gamma_\pm\}_{k} \hbox{\textbf{\,-\! -\! -}\,} \{\gamma_+ \gamma_-^{-1}\}_{k+1} +(\deg \geq 2k)
$$
where each curve $\gamma_\pm$ is oriented and connected to $*$ 
by an arc (in an arbitrary way)
to define $\gamma_+ \gamma_-^{-1} \in \Gamma_{k+1} \pi$.
\end{enumerate}
\end{prop}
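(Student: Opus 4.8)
The plan is to derive both statements from Theorem~\ref{thm:diagram} by computing inside the completed graded Lie algebra $\calT$ equipped with the bracket \eqref{eq:Lie}, while bookkeeping degrees. Throughout I use that gluing root-to-root a rooted tree representing an element of $\frakL_j$ with one representing an element of $\frakL_{j'}$ yields a Jacobi diagram of degree $j+j'-2$ (count the trivalent vertices), so that the bracket \eqref{eq:Lie} of a degree-$d$ and a degree-$d'$ diagram has degree $d+d'$. For part (i), recall the expansion $\ell_\gamma:=\log\theta([\gamma])=\{\gamma\}_k+(\deg\geq k+1)$ for $\gamma$ of nilpotency class $\geq k$. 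Substituting this into the formula $\Xi^{-1}(\log t_\gamma^\theta)=\tfrac12\,\ell_\gamma\,\hbox{\textbf{\,-\! -\! -}\,}\,\ell_\gamma$ of Theorem~\ref{thm:diagram} and expanding bilinearly, the lowest term is $\tfrac12\{\gamma\}_k\,\hbox{\textbf{\,-\! -\! -}\,}\,\{\gamma\}_k$, of degree $2k-2$, while every other term glues $\{\gamma\}_k\in\frakL_k$ to an element of $\frakL_{\geq k+1}$ and so has degree $\geq 2k-1$; this is exactly (i).

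For part (ii), set $P:=\log(t_{\gamma_-}^\theta)$, $Q:=\log(t_{\gamma_+}^\theta)$, and $\delta:=\ell_+-\ell_-$ with $\ell_\pm:=\log\theta([\gamma_\pm])$. First I would compute $\delta$. Since $\theta$ is multiplicative and $[\gamma_+]=(\gamma_+\gamma_-^{-1})\,[\gamma_-]$ in $\pi$, the Baker--Campbell--Hausdorff formula gives $\ell_+=\bch\big(\log\theta(\gamma_+\gamma_-^{-1}),\ell_-\big)$, whence
\[
\delta=\log\theta(\gamma_+\gamma_-^{-1})+\tfrac12\big[\log\theta(\gamma_+\gamma_-^{-1}),\ell_-\big]+\cdots=\{\gamma_+\gamma_-^{-1}\}_{k+1}+(\deg\geq k+2),
\]
because $\gamma_+\gamma_-^{-1}\in\Gamma_{k+1}\pi$ and each correction bracket has degree $\geq 2k+1$. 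As $\hbox{\textbf{\,-\! -\! -}\,}$ is symmetric and bilinear, $\ell_+\,\hbox{\textbf{\,-\! -\! -}\,}\,\ell_+-\ell_-\,\hbox{\textbf{\,-\! -\! -}\,}\,\ell_-=2\,\ell_-\,\hbox{\textbf{\,-\! -\! -}\,}\,\delta+\delta\,\hbox{\textbf{\,-\! -\! -}\,}\,\delta$, so Theorem~\ref{thm:diagram} yields
\[
\Xi^{-1}(Q-P)=\ell_-\,\hbox{\textbf{\,-\! -\! -}\,}\,\delta+\tfrac12\,\delta\,\hbox{\textbf{\,-\! -\! -}\,}\,\delta=\{\gamma_\pm\}_k\,\hbox{\textbf{\,-\! -\! -}\,}\,\{\gamma_+\gamma_-^{-1}\}_{k+1}+(\deg\geq 2k),
\]
since $\ell_-\,\hbox{\textbf{\,-\! -\! -}\,}\,\delta$ has leading term $c\,\hbox{\textbf{\,-\! -\! -}\,}\,\{\gamma_+\gamma_-^{-1}\}_{k+1}$ in degree $2k-1$ (writing $c:=\{\gamma_\pm\}_k$) and $\delta\,\hbox{\textbf{\,-\! -\! -}\,}\,\delta$ has degree $\geq 2k$. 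Thus the whole proof reduces to showing that the BCH corrections in $\log(\exp(-P)\exp(Q))=(Q-P)-\tfrac12[P,Q]+\cdots$ do not disturb the part of degree $\le 2k-1$.

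This last point is the main obstacle, and it is delicate only when $k=1$. For $k\geq 2$, $P$ and $Q$ have leading degree $2k-2$ with the \emph{same} leading term $\tfrac12\,c\,\hbox{\textbf{\,-\! -\! -}\,}\,c$, so $[P,Q]=[P,Q-P]$ has degree $\geq (2k-2)+(2k-1)=4k-3\geq 2k$, and higher brackets have even larger degree; hence $\log(\exp(-P)\exp(Q))\equiv Q-P\pmod{\deg\geq 2k}$ and we are done. When $k=1$, however, $s:=\tfrac12\,c\,\hbox{\textbf{\,-\! -\! -}\,}\,c$ is a degree-$0$ symplectic derivation and the brackets can a priori reach degree $1=2k-1$. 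To control all of them simultaneously I would observe that the degree-$0$ parts of $-P$ and of $Q$ are $-s$ and $+s$, which cancel, i.e. $\exp(-s)\exp(s)=1$; extracting the degree-$1$ part of $\log(\exp(-P)\exp(Q))$ then reduces, by a Duhamel (derivative-of-the-exponential) computation at this identity base point, to
\[
\int_0^1 \exp(-u\,\ad_s)\big(c\,\hbox{\textbf{\,-\! -\! -}\,}\,\{\gamma_+\gamma_-^{-1}\}_{2}\big)\,du .
\]

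Finally, I claim $[\,c\,\hbox{\textbf{\,-\! -\! -}\,}\,c,\ c\,\hbox{\textbf{\,-\! -\! -}\,}\,\{\gamma_+\gamma_-^{-1}\}_2\,]=0$, which makes the integrand constant and equal to $c\,\hbox{\textbf{\,-\! -\! -}\,}\,\{\gamma_+\gamma_-^{-1}\}_2$, completing the argument. Indeed, by \eqref{eq:Lie} each diagram in this bracket is a tripod two of whose legs are colored by $c$ (the term pairing the two $c$-vertices drops out since $\omega(c,c)=0$), and such a tripod vanishes because transposing its two equal legs reverses the cyclic orientation of the trivalent vertex, forcing the diagram to equal its own negative via the \emph{AS} relation. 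In summary, part (ii) follows from the computation of $Q-P$ above once the BCH corrections are shown to be of degree $\geq 2k$, the only genuinely subtle case being $k=1$, where the correction survives in the target degree but is annihilated by this orientation-reversal identity.
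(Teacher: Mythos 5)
Your proof is correct and follows the same route as the paper, which only sketches the argument: both parts are deduced from Theorem~\ref{thm:diagram} by degree bookkeeping in $\calT$, with (ii) handled via the Baker--Campbell--Hausdorff formula and the Lie bracket \eqref{eq:Lie}. Your write-up is in fact more complete than the paper's one-line indication, since you isolate and correctly resolve the only delicate case $k=1$, where the BCH correction a priori contributes in the target degree $2k-1$ but is killed by the vanishing of tripods with a repeated $H$-color.
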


\begin{proof}[About the proof]
Statement (i) is a direct consequence of formula \eqref{eq:symmetric}.
As for statement~(ii), note that the product $(t_{\gamma_-}^\theta)^{-1}\, t_{\gamma_+}^\theta$ 
does have a logarithm since it acts trivially on $I/I^2 \cong H$; 
then (ii) is a less immediate consequence of \eqref{eq:symmetric} using the Baker--Campbell--Hausdorff formula
and the Lie bracket \eqref{eq:Lie} in $\calT$. 
\end{proof}

\subsection{Generation of the generalized mapping class group}

Recall from  Section \ref{sec:realize} that the mapping class group $\calM$ can be regarded
 as a subgroup of the generalized mapping class group $\widehat{\calM}$.
Think of the latter  in the form~\eqref{eq:form2}.
Then, for any $k\geq 0$, let $\widehat{\calM}[k]$ be the subgroup of $\widehat{\calM}$ 
that acts trivially on $\widehat{\Q\pi}/\widehat{I^{k+1}}$: the sequence of nested subgroups
$$
\widehat{\calM}=\widehat{\calM}[0] \supset \widehat{\calM}[1] \supset \cdots
\supset \widehat{\calM}[k] \supset \widehat{\calM}[k+1] \supset \cdots
$$
is the analogue for $\widehat{\calM}$ of the {\it Johnson filtration} of the mapping class group~${\calM}$,
which has been introduced by Johnson in \cite{Joh83} and studied by Morita in \cite{Mo93}.

It can be verified that the Johnson filtration is ``strongly'' central in the sense that 
$\big[\widehat{\calM}[j], \widehat{\calM}[k]\big] \subset \widehat{\calM}[j+k]$ for all  $j,k\geq 0$. 
In particular, it consists of normal subgroups of $\widehat{\calM}$. Furthermore, 
the fact that the filtration  $\{ \widehat{I^k} \}_k$ of $\widehat{\Q\pi}$ has a trivial intersection easily implies that
$$
 \bigcap_{k=0}^{\infty} \widehat{\calM}[k]=\{1\}.
$$
In the sequel, we consider the Hausdorff topology on the group $\widehat{\calM}$ defined by the Johnson filtration.

Any closed curve $\gamma$ in $\Sigma$ defines a one-parameter family $\{ t_{r,\gamma} \}_{r\in \Q}$
in  $\widehat{\calM}$  by setting
\begin{equation} \label{eq:trC}
t_{r,\gamma} := \exp\Big(r \sigma\big((\log \gamma)^2\big) \Big) .  
\end{equation}
Note that $t_{1/2,\gamma}$ is  the generalized Dehn twist $t_\gamma$ along $\gamma$:
hence $\{ t_{r,\gamma} \}_{r\in \Q}$ consists of all rational roots of $t_\gamma$. 
The following result, which seems to be new,  
is an algebraic analogue of the fact that $\calM$ is generated by usual Dehn twists.

\begin{thm}
The group $\widehat{\calM}$ is topologically generated by the elements $t_{r,\gamma}$ for all $r\in \Q$
and any closed curve $\gamma$ in $\Sigma$.
\end{thm}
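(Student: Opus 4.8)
The plan is to filter the problem through the Johnson filtration $\widehat{\calM}=\widehat{\calM}[0]\supset\widehat{\calM}[1]\supset\cdots$ and to argue ``degree by degree''. Since this filtration is Hausdorff with $\bigcap_k\widehat{\calM}[k]=\{1\}$, the closure of the subgroup $G:=\langle t_{r,\gamma}\rangle$ equals $\widehat{\calM}$ if and only if $G\cdot\widehat{\calM}[k]=\widehat{\calM}$ for every $k\geq 0$, i.e$.$ $G$ surjects onto each quotient $\widehat{\calM}/\widehat{\calM}[k]$. First I would record the standard successive-approximation principle for such a complete filtered group, which reduces the claim to two points: (a) $G$ surjects onto the top quotient $\widehat{\calM}/\widehat{\calM}[1]$; and (b) the ``leading symbols'' of the elements of $G$ lying in the pronilpotent kernel $\widehat{\calM}[1]$ generate the associated graded Lie algebra. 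Through the symplectic expansion $\theta$ and the isomorphism $\Xi$, this graded Lie algebra is identified in positive degrees with $\Der_\omega(\frakL)$, hence with $\calT^+:=\bigoplus_{d\geq 1}\calT_d$ equipped with the bracket \eqref{eq:Lie}, while $\widehat{\calM}/\widehat{\calM}[1]$ embeds into $\operatorname{Sp}(H)$. It is here that the rational parameter $r$ in \eqref{eq:trC} is essential: since each graded piece $\calT_d$ is a $\Q$-vector space, the family $\{t_{r,\gamma}\}_{r\in\Q}$ is exactly what supplies \emph{rational} (not merely integral) multiples of symbols, which is what density requires.

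For the top quotient (a), I would note that for a simple closed curve $C$ the family $t_{r,C}$ acts on $H=\widehat{\Q\pi}/\widehat{I^2}$ by the symplectic transvection $x\mapsto x+2r\,\omega(x,[C])[C]$; this is the degree-$0$ part of $\Xi^{-1}(\log t_{r,C}^\theta)=r\,[C]\hbox{\textbf{\,-\! -\! -}\,}[C]+(\deg\geq 1)$, given by Proposition \ref{prop:one_two}(i) with $k=1$. As $C$ runs over simple closed curves its class $[C]$ runs over all primitive integral classes, and $r$ over $\Q$, so the image of $G$ contains every rational symplectic transvection. Since transvections generate $\operatorname{Sp}(H)$ and $\widehat{\calM}/\widehat{\calM}[1]\subseteq\operatorname{Sp}(H)$, this forces equality and settles (a) simultaneously.

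For the graded part (b) I would read off the positive-degree symbols supplied by Proposition \ref{prop:one_two}: a curve $\gamma$ of nilpotency class $\geq k$ gives the symmetric square $r\,\{\gamma\}_{k}\hbox{\textbf{\,-\! -\! -}\,}\{\gamma\}_{k}\in\calT_{2k-2}$, while the combinations $(t_{\gamma_-})^{-1}t_{\gamma_+}\in G$ of part (ii) give $\{\gamma_\pm\}_{k}\hbox{\textbf{\,-\! -\! -}\,}\{\gamma_+\gamma_-^{-1}\}_{k+1}\in\calT_{2k-1}$. Because every class in $\frakL_k$ is realized as $\{\gamma\}_k$ for a suitable closed curve, and because $r$ ranges over $\Q$, polarization of the quadratic map $\xi\mapsto\xi\hbox{\textbf{\,-\! -\! -}\,}\xi$ shows that the symbols of $G$ fill the whole ``balanced'' subspaces; in particular they give \emph{all} of $\calT_1$ (from (ii), $k=1$) and all of $\calT_2$ (from (i), $k=2$). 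It then remains to show that these symbols generate $\calT^+$ as a Lie algebra, which I would do by induction on $d$. For $d\geq 2$ every tree-shaped Jacobi diagram has a \emph{cherry} (a trivalent vertex incident to two univalent vertices: a leaf of the subtree spanned by the trivalent vertices), and cutting the corresponding internal edge exhibits $\calT_d=B(\frakL_2,\frakL_d)$, where $B(\mu,\nu):=\mu\hbox{\textbf{\,-\! -\! -}\,}\nu$. Since $B(\frakL_2,\frakL_d)$ is precisely the type of term produced by the bracket \eqref{eq:Lie} of $\calT_1$ with $\calT_{d-1}$ (rooting a degree-$1$ diagram at a leg lands in $\frakL_2$, rooting a degree-$(d-1)$ diagram lands in $\frakL_d$), the inductive step reduces to the identity $[\calT_1,\calT_{d-1}]=\calT_d$, with the already-established $\calT_1,\calT_2$ as base cases.

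The main obstacle is exactly this surjectivity $[\calT_1,\calT_{d-1}]=\calT_d$ --- equivalently, that the symplectic derivation Lie algebra $\Der_\omega^+(\frakL)$ is generated by its degree-$1$ and degree-$2$ parts. The bracket \eqref{eq:Lie} is $\mathfrak{sp}(H)$-equivariant, so $[\calT_1,\calT_{d-1}]$ is an $\mathfrak{sp}(H)$-submodule of $\calT_d$ and the claim can be tested on $\mathfrak{sp}(H)$-isotypical components; this is where the representation theory of $\mathfrak{sp}(2g)$ and the $IHX$-relation enter, and where some care may be needed for small genus $g$. I expect that the explicit form of the bracket, combined with the fact that we already control \emph{all} of $\calT_1$ and $\calT_2$ (and not only the symmetric images of the gluing maps), makes this surjectivity tractable, thereby completing step (b) and with it the theorem.
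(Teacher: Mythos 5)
Your overall architecture --- inducting along the Johnson filtration, handling the degree-$0$ quotient by symplectic transvections, and reading off higher-degree symbols from Proposition \ref{prop:one_two} --- is the same as the paper's, and steps (a) and the realization of the ``balanced'' subspaces in (b) are correct. The argument breaks, however, at the reduction to $[\calT_1,\calT_{d-1}]=\calT_d$. This identity is not merely left unproven; it is false in general. Morita's trace maps (the central construction of \cite{Mo93}, which the paper cites) give nonzero linear maps $\operatorname{Tr}_k\colon \calT_k \to S^kH$ for odd $k\geq 3$ that vanish on every bracket of positive-degree symplectic derivations; hence $[\calT^+,\calT^+]_k\subsetneq\calT_k$ for those $k$, and already $[\calT_1,\calT_2]\subsetneq\calT_3$. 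In other words, $\Der_\omega^+(\frakL)$ has nontrivial abelian quotients concentrated in degrees $\geq 3$ and is \emph{not} generated by its degree-$1$ and degree-$2$ parts, so your induction cannot get past $d=3$ no matter how the $\mathfrak{sp}(H)$-isotypic analysis is organized.

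The repair is already in your hands, and it is exactly the paper's route: the subspaces you prove to be realizable are in fact everything, so no Lie-algebra generation is needed. Polarization of $\xi\mapsto \xi\hbox{\textbf{\,-\! -\! -}\,}\xi$ together with the independent realizability of $\{\gamma_\pm\}_{k}$ and $\{\gamma_+\gamma_-^{-1}\}_{k+1}$ gives you all of $B(\frakL_{m+1},\frakL_{m+1})\subseteq\calT_{2m}$ and $B(\frakL_{m+1},\frakL_{m+2})\subseteq\calT_{2m+1}$; and these ``balanced'' subspaces equal $\calT_{2m}$ and $\calT_{2m+1}$ respectively. (This is the same IHX edge-sliding you invoke for the cherry decomposition $\calT_d=B(\frakL_2,\frakL_d)$: IHX lets you move the distinguished edge across a trivalent vertex, e.g$.$ to rewrite a ``star-shaped'' degree-$4$ tree, which has no balanced edge, as a combination of caterpillars which do.) The paper therefore writes the leading term of the remaining error directly as $\sum_i r_i\, x_i\hbox{\textbf{\,-\! -\! -}\,}x_i$ with $x_i\in\frakL_{m+1}^\Z$ in even degree, killed by the single product $\prod_i t_{r_i,C_i}$, and as $\sum_i 2r_i\, x_i\hbox{\textbf{\,-\! -\! -}\,}y_i$ in odd degree, killed by $\prod_i (t_{r_i,C_i})^{-1}t_{r_i,D_i}$ --- one corrective factor per degree, and no commutators at all.
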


\begin{proof}
Let $\mathcal{TW}^\Q$ be the subgroup of $\widehat{\calM}$  generated by the $t_{r,\gamma}$ for all $r\in \Q$
and any $\gamma \subset \Sigma$. To prove that $\mathcal{TW}^\Q$  is dense in $\widehat{\calM}$, 
we shall prove  the following statement for any  $u \in \widehat{\calM}$
and any integer $n\geq 0$:
\begin{quote}
{\it there exist elements $u_k \in {\mathcal{TW}^\Q \cap \widehat{\calM}[k]}$ for $k\in\{0,\dots,n\}$ such that}
$$ 
(\mathcal{H}_n) \qquad u \equiv \prod_{k=0}^n u_k \mod \widehat{\calM}[n+1].
$$
\end{quote}
 
The proof is by induction on $n$.
We firstly prove $(\mathcal{H}_0)$. Consider the automorphism $\tilde u_0$ of $\widehat{I^1}/\widehat{I^2} \cong H$ 
induced by $u$: it preserves $\omega$ since $u$ preserves the homotopy intersection form $\eta$. 
Therefore, $\tilde u_0$ is a finite product of symplectic transvections. 
Observe the following general fact about a symplectic transvection
\begin{equation} \label{eq:transvection}
 H \longrightarrow H, \ h \longmapsto h+ \omega(c,h) \cdot c,
\end{equation}
that is defined by an element $c\in H$: one can find a closed curve $C$ in $\Sigma$ and an integer $m\geq 1$ 
such that (for some orientation of $C$) we have $c=[C]/m \in H$; then $t_{1/2m,C}$ induces \eqref{eq:transvection} 
at the level of $\widehat{I^1}/\widehat{I^2} \cong H$. We deduce that
there exists $u_0\in \mathcal{TW}^\Q$ such that $u_0$ induces $\tilde u_0 \in \Aut(H)$:
therefore $u\equiv u_0 \mod \widehat{\calM}[1]$.

Assuming $(\mathcal{H}_{n-1})$ for $n\geq 1$, we shall now prove $(\mathcal{H}_{n})$. 
Set $v:= \prod_{k=0}^{n-1} u_k$.
Choose a symplectic expansion $\theta$ of $\pi$.  Since $v^{-1}u$ belongs to  $\widehat{\calM}[n]$, the derivation
$$
\delta:=\log(\theta\circ (v^{-1}u)\circ \theta^{-1}) \ \in \Der_{\omega}(\widehat\frakL)
$$
increases degrees by at least $n$. 
Therefore, the series of Jacobi diagrams $\Xi^{-1}(\delta)\in \widehat{\calT}$
starts in degree $n$.

Assume that $n=2m$ is even. 
As any element of the vector space $\calT_n$, the leading term 
of $\Xi^{-1}(\delta)$ can be written in the form
$$
\sum_{i=1}^p r_i\cdot  {x}_i \hbox{\textbf{\,-\! -\! -}\,} {x_i} \ \in \calT_n
$$
where $r_i\in \Q$ and ${x}_i \in \frakL_{m+1}^\Z$. For every $i\in \{1,\dots,p\}$, we choose
a closed curve $C_i$ in $\Sigma$ of nilpotency class $\geq m+1$ that represents $x_i \in \frakL_{m+1}^\Z \cong \Gamma_{m+1} \pi/\Gamma_{m+2} \pi$.
Proposition \ref{prop:one_two}.(i) has the following generalization:
for any closed curve $\gamma$ in $\Sigma$ of nilpotency class $\geq k$ and for any $r\in \Q$, we have
$$
\Xi^{-1}\big(\log( \theta \circ t_{r,\gamma} \circ \theta^{-1} )\big) = 
r \cdot \{\gamma\}_{k} \hbox{\textbf{\,-\! -\! -}\,} \{\gamma\}_{k} +(\deg \geq 2k-1)
$$
and, in particular, $t_{r,\gamma}$ belongs to $\widehat{\calM}[2k-2]$.
Therefore,
\begin{eqnarray*}
&&\Xi^{-1}\log\Big(\theta \circ \Big( \prod_{i=1}^p t_{r_i,C_i} \Big) \circ \theta^{-1} \Big) \\
&= & \sum_{i=1}^p\, \Xi^{-1}\log\big(\theta \circ t_{r_i,C_i} \circ \theta^{-1} \big) + (\deg \geq n+1)\\
& =& \sum_{i=1}^p r_i\cdot  {x}_i \hbox{\textbf{\,-\! -\! -}\,} {x_i}+ (\deg \geq n+1)
\end{eqnarray*}
and then $v^{-1}u \equiv u_n \mod \widehat{\calM}[n+1]$ where
$$
u_n  := \prod_{i=1}^p t_{r_i,C_i} \in \mathcal{TW}^\Q \cap \widehat{\calM}[n].
$$

Assume now that $n=2m+1$ is odd. As any element of the vector space~$\calT_n$, the leading term
of $\Xi^{-1}(\delta)$ can be written in the form
$$
\sum_{i=1}^p 2r_i\cdot  {x}_i \hbox{\textbf{\,-\! -\! -}\,} {y_i} \ \in \calT_n
$$
where $r_i\in \Q$, ${x}_i \in \frakL_{m+1}^\Z$ and  ${y}_i \in \frakL_{m+2}^\Z$ . For every $i\in \{1,\dots,p\}$, we choose
two closed curves $C_i$ and $D_i$ in $\Sigma$ of nilpotency class $\geq m+1$
that both represent $x_i \in \frakL_{m+1}^\Z \cong \Gamma_{m+1} \pi/\Gamma_{m+2} \pi$
 and such that $D_i C_i^{-1}$ represents~$y_i$.
Then, by a similar argument to the case where $n$ is even and using a generalized version of Proposition \ref{prop:one_two}.(ii), we obtain
\[
\Xi^{-1}\log\Big(\theta \circ \Big( \prod_{i=1}^p \big(t_{r_i,C_i}\big)^{-1}\, t_{r_i,D_i}\Big) \circ \theta^{-1} \Big)
=
\sum_{i=1}^p 2r_i\cdot  {x}_i \hbox{\textbf{\,-\! -\! -}\,} {y_i} +(\deg \geq n+1),
\]
and then $v^{-1}u \equiv u_n \mod \widehat{\calM}[n+1]$ where
$$
u_n  := \prod_{i=1}^p (t_{r_i,C_i})^{-1}\, t_{r_i,D_i} \in \mathcal{TW}^\Q \cap \widehat{\calM}[n].
$$
\end{proof}

\section{Algebraic formulation of generalized Dehn twists}
\label{sec:Fox}

We review from \cite{MT13} a group-algebraic framework for generalized Dehn twists. 
In fact, (at least) part of this framework can be extended to Hopf algebras~\cite{MT14}.
Hence we consider a Hopf algebra $A$: let $\Delta\colon A \to A \otimes A$ be the coproduct,  $\varepsilon\colon A \to \Q$ the counit
and $S\colon A \to A$ the antipode. We assume that $A$ is involutive, i.e$.$ $S^2= \operatorname{id}_A$,
and we denote by $I:=\ker \varepsilon$ the augmentation ideal of $A$.

A \emph{Fox pairing} in $A$ is a $\Q$-bilinear map $\eta\colon A \times A \to A$ such that
\begin{equation} \label{eq:Fox}
\eta(ab,c) = a\,\eta(b,c) + \varepsilon(b)\, \eta(a,c), \quad 
\eta(a,bc) =  \eta(a,b)\,c + \varepsilon(b)\, \eta(a,c)
\end{equation}
for all $a,b,c\in A$. 
It follows  that 
\begin{equation} \label{eq:II}
\eta(I^m,I^n)\subset I^{m+n-2}
\end{equation} for any integers $m,n\geq 1$
(with the convention that $I^{-2}=I^{-1}=I^0=A$).

A Fox pairing $\eta$ induces two other bilinear forms.
First, the \emph{homological form} induced by $\eta$ is the bilinear map
$$
(-\bullet_\eta -)\colon I/I^2 \times I/I^2 \longrightarrow \Q
$$ 
defined by $\{a\} \bullet_\eta \{b\} = \varepsilon \eta(a,b)$.
Second, the \emph{derived form} of $\eta$ is the bilinear map $\sigma_{\eta}\colon A \times A \to A$ defined by
$$
\sigma_{\eta} (a,b) := \sum_{(a)}\sum_{(b)}\sum_{ (\eta(a'',b''))}
 b'\, S(\eta(a'',b'')')\, a'\, \eta(a'',b'')''
$$ 
for any $a,b\in A$. (Here we have used Sweedler's notation 
$\Delta(c) = \sum_{(c)} c'\otimes c''$
to write down the coproduct of any element $c\in A$.) 
It can be checked that $\sigma_\eta$ induces a map
$$
\sigma_{\eta}\colon A \longrightarrow \operatorname{Der}(A,A), \ a \longmapsto \sigma_{\eta} (a,-)
$$
with values in the Lie algebra of derivations of $A$ and
which vanishes on commutators of $A$.

We now recall the most fundamental example of Fox pairings that arises from topology,
and which we have already mentioned in previous sections.

\begin{example} \label{ex:hif}
Assume that $A:=\Q\pi$ is the algebra of a group $\pi$.
Then, the notion of Fox pairing  appears under different names (but equivalent forms) in Papakyriakopoulos's work  \cite{Papa75} 
and in Turaev's paper~\cite{Tur78}.
These works mostly considered the fundamental group $\pi$ of an oriented surface with non-empty boundary,
and the \emph{homotopy intersection form}
$$
\eta\colon \Q \pi \times \Q \pi \longrightarrow \Q\pi
$$ 
that we have reviewed in \eqref{eq:eta}.
It turns out that $\eta$ is a Fox pairing,
whose homological form $\bullet_\eta$  in 
$$I/I^2 \cong H_1(\pi;\Q) \cong H_1(\Sigma;\Q)$$ 
is the intersection form $\omega$,
and whose derived form $\sigma=\sigma_{\eta}$ is the action~\eqref{eq:sigma}.
This Fox pairing $\eta$ has some remarkable properties: it is ``skew-symmetric'' in  some weak sense,
and the corresponding ``double bracket'' is ``quasi-Poisson'' in the sense of \cite{VdB08}. (See \cite{MT13,MT14}.)
These properties generalize the fact that the Goldman bracket (which is induced by $\sigma$) is a Lie bracket~\cite{Gol86}.
\end{example}

Here are two other examples of Fox pairings that still arise from topology,
and which need to consider Hopf algebras in a broader sense.

\begin{example}
Assume that, instead of a Hopf algebra $A$, we are given a \emph{complete Hopf algebra} $\widehat A$
(i.e$.$ a ``Hopf  monoid'' in the symmetric monoidal category of complete $\Q$-vector spaces:
see \cite[Appendix A]{Qui69} for the definition).
Then the notion of ``Fox pairing'' extends verbatim to $\widehat A$.
Let $\pi$ be a group, and  let 
$$
\widehat A:= \varprojlim_{k} A/I^k
$$
be the $I$-adic completion of the group algebra $A:=\Q \pi$. 
By \eqref{eq:II}, any Fox pairing in $A$ induces by continuity a Fox pairing in $\widehat A$, 
but there also exist Fox pairings in $\widehat A$ that (a priori) do not arise from $A$. 
For  the commutator subgroup $\pi$ of a knot group in the standard $3$-sphere,
Turaev constructed such a Fox pairing in $\widehat A$ using the homotopy intersection form of a Seifert surface of the knot:
see \cite[Supplement 3]{Tur78} and \cite[\S 5]{Tur81} for further details.
\end{example}

\begin{example}
Assume that $A$ is a \emph{graded Hopf algebra}
(i.e$.$ a ``Hopf  monoid'' in the symmetric monoidal category of graded $\Q$-vector spaces):
then the theory of Fox pairings can be easily adapted to this setting.
Let $M$ be a smooth oriented manifold of dimension $d>2$ with non-empty boundary,
and let $A:= H_*(\Omega(M,\star);\Q)$ be the homology of its loop space $\Omega(M,\star)$ based at a point $\star \in \partial M$.
Then, by intersecting families of loops in $M$, one  defines in the graded Hopf algebra $A$ 
a  Fox pairing of degree $2-d$: it is ``skew-symmetric'' in some sense, 
and the corresponding ``double bracket'' is Gerstenhaber of degree $2-d$ in the sense of \cite{VdB08}. 
We refer to \cite{MT17} for the construction of this intersection double bracket which uses the ideas of string topology, 
and to \cite[Appendix B]{MT18} for the correspondence between Fox pairings and double brackets.
\end{example}

We now restrict ourselves to the case where the Hopf algebra $A=\Q\pi$ is a group algebra.
Let $\widehat A= \widehat{I^{0}} \supset \widehat{I^{1}} \supset \widehat{I^{2}} \supset \cdots$ be the canonical filtration
on the $I$-adic completion $\widehat A$ of $A$.
Let $\eta$ be a Fox pairing in $A$ and let $C$ be a group-like element of $\widehat A$ such that
\begin{equation} \label{eq:isotropic}
\{C-1\} \bullet_\eta \{C-1\}=0
\end{equation}
where $\{C-1\} \in \widehat{I}/\widehat{I^2} \cong I/I^2$ denotes the class of $C-1$.
It follows from~\eqref{eq:II} that $\sigma_\eta$ induces a map
$\sigma_{\eta}: \widehat{I^2} \to\operatorname{Der}(\widehat A,\widehat A)$
with values in the Lie algebra of filtration-preserving derivations of $\widehat A$.
Furthermore, the hypothesis~\eqref{eq:isotropic} implies that 
$$
 \sigma_{\eta}\big(\log(C)^2\big) \in  \operatorname{Der}(\widehat A, \widehat A)
$$
is ``weakly nilpotent'' in the sense that, for any integer $m\geq 1$, 
it maps $\widehat A$ to~$\widehat{I^m}$ 
after sufficiently enough iterations  \cite[Lemma 4.1]{MT13}.
Hence, we can consider the \emph{twist}~map
\begin{equation} \label{eq:twist}
t_{r,C} := \exp\Big(r\sigma_{\eta}\big(\log(C)^2\big)  \Big)
\end{equation}
for any scalar $r\in \Q$ and any group-like element $C\in  \widehat A$. 
It turns out that $t_{r,C}$ is an automorphism of the complete Hopf algebra $\widehat A$ \cite[Theorem 5.1]{MT13}:
thus, $t_{r,C}$ can be equivalently regarded as an automorphism of the Malcev completion $\widehat \pi$.

\begin{example}
If $\pi$ is the fundamental group of an oriented surface with non-empty boundary
and  $\eta$ is the homotopy intersection form \eqref{eq:eta},
then $t_{r,C}$ is the generalized Dehn twist  \eqref{eq:trC}.
\end{example}

\begin{rem}
As it has been overviewed above, the theory  of Fox pairings also exists for (graded) Hopf algebras \cite{MT14,MT18}.
Thus, it seems possible to define twists in the Hopf-algebraic setting as well. 
In particular, it would be interesting to understand the topological origin of the twists
that arise in  higher dimension  from the intersection operation of \cite{MT13}.
\end{rem}

\section{Generalized Dehn twists and homology cylinders}
\label{sec:hc}

There is another field of low-dimensional topology where automorphisms of the Malcev completion $\hpi$  naturally appear:
this is the study of homology cylinders.
The latter has started with the work of Garoufalidis and Levine~\cite{GL05};
see also \cite[\S 8.5]{Habiro00} and \cite{Habegger00}.

A \emph{homology cobordism} of $\Sigma$ is a pair $(M,m)$
consisting of a compact oriented 3-manifold   $M$ and  a diffeomorphism
$m\colon {\partial (\Sigma \times [-1,+1])} \to \partial M$ preserving the orientations,
such that the inclusion maps $m_{\pm}\colon \Sigma \to M$ defined by $m_\pm (x) := m(x,\pm 1)$ give isomorphisms in integral  homology.
Thus $M$ is a cobordism (with corners) whose ``input'' surface $\partial_+ M :=m_+(\Sigma)$
and ``output'' surface  $\partial_- M :=m_-(\Sigma)$ are both parametrized by the ``standard'' surface $\Sigma$:
$$
\labellist
\scriptsize\hair 2pt
\pinlabel {$\partial_+ M$} [r] at 1 186
 \pinlabel {$\partial_- M$} [r] at 0 75
 \pinlabel {$M$}  at 92 128
 \pinlabel {$m_+$} [r] at 91 222
 \pinlabel {$m_-$} [r] at 90 38
 \pinlabel {$\Sigma$} [r] at 2 255
 \pinlabel {$\Sigma$} [r] at 3 4
\endlabellist
\centering
\includegraphics[scale=0.29]{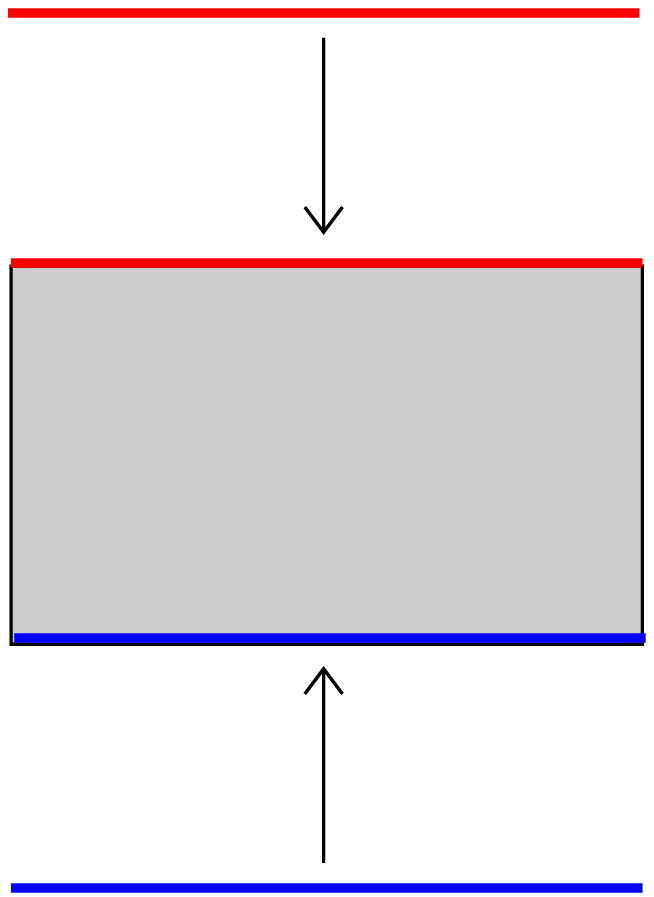}
$$
Homology cobordisms can be multiplied in the usual fashion by gluing ``output'' to ``input'' surfaces. Hence the set 
$$
\calC := \big\{  \hbox{\small homology cobordisms of $\Sigma$}  \big\}  / \hbox{\small diffeomorphism}
$$ 
is a monoid, whose neutral element is the usual cylinder $U:= \Sigma \times [-1,+1]$
(taking the identity as boundary parametrization $u\colon \partial(\Sigma \times [-1,+1]) \to \partial U$).
It is well-known that the group of invertible elements of $\calC$ is   the mapping class group $\calM$,
which is viewed as a subset of $\calC$ by the ``mapping cylinder'' construction.
(See, for instance, \cite[\S 2]{HM} for a survey.)

Let $(M,m)\in \calC$ and  $k\ge 1$.
It follows from a result of Stallings~\cite{Sta65} that
 the maps $m_+$ and $m_-$ induce isomorphisms $\pi/\Gamma_{k+1} \pi \to \pi_1(M)/\Gamma_{k+1} \pi_1(M)$.
Thus there is a monoid homomorphism
\[
\rho_k \colon \calC \longrightarrow \Aut(\pi/\Gamma_{k+1}\pi),
\quad (M,m)  \longmapsto \big( {(m_-)}^{-1} \circ m_+\big).
\]
Since $\hpi$ is the projective limit of the Malcev completions of $\pi/\Gamma_{k+1} \pi$ as ${k\to \infty}$,
we obtain  a homomorphism $\hat\rho \colon \calC \to \Aut(\hpi)$, which  generalizes the Dehn--Nielsen representation
of the mapping class group:
$$
\xymatrix{
\calM \ar[r]^-{\rho} \ar@{^{(}->}[d]_-{\stackrel{\hbox{\scriptsize mapping }}{\hbox{\scriptsize  cylinder}}}  & \Aut_\partial(\pi) \ar[d]^-{\ \widehat{\centerdot}}& \\
\calC\, \ar[r]^-{\hat \rho} & \Aut_\partial(\hpi) &\!\!\!\!\!\!\!\!\!\!\!\!\!\!\!\! 
= \widehat{\calM}
}
$$

We now explain how the representation $\hat \rho$ is related to generalized Dehn twists.
Let $\gamma\subset \Sigma$ be a closed curve. A \emph{resolution} of $\gamma$ is a knot $K$ in the usual cylinder $U$
which projects onto $\gamma$ in $\Sigma = \Sigma \times \{+1\}$. 

\begin{thm}[\cite{KM19}] \label{thm:curve_knot}
Let $\gamma\subset \Sigma$ whose class $[\gamma] \in \pi$ (for some arbitrary choices of orientation and connecting arc to $*$) 
 belongs to $\Gamma_{k} \pi$ for some $k\geq 2$. Then, for any resolution $K$ of $\gamma$, we have a commutative diagram
$$
\xymatrix{
{\pi}/{\Gamma_{2k+1} \pi} \ar[d]_-{\rho_{2k}(U_K)}  \ar[r] & \hpi / \hpi_{2k+1} \ar[d]^-{(t_\gamma)^{\varepsilon}} \\
{\pi}/{\Gamma_{2k+1} \pi}   \ar[r] & \hpi / \hpi_{2k+1} 
}
$$
where $U_K \in \calC$ is obtained from $U$ by surgery along $K$ taking $\varepsilon \in \{-1,+1\}$ as ``framing number''.
\end{thm}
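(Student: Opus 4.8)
The plan is to verify commutativity by evaluating both composites on the class of an arbitrary based loop $\ell \in \pi$ and comparing the outputs in $\hpi/\hpi_{2k+1}$. Since an automorphism of a nilpotent quotient is determined by its values on generators, and both vertical maps are group automorphisms, it suffices to show that the image of $\rho_{2k}(U_K)(\ell)$ under $\pi/\Gamma_{2k+1}\pi \to \hpi/\hpi_{2k+1}$ coincides with $(t_\gamma)^\varepsilon$ applied to the image of $\ell$, for every $\ell$ meeting $\gamma$ transversally at points $p_1,\dots,p_n$ with signs $\varepsilon_1,\dots,\varepsilon_n$.

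First I would compute the left-hand (surgery) side geometrically. Realizing $U_K$ by removing a tubular neighborhood of the resolution $K$ from the cylinder $U=\Sigma \times [-1,+1]$ and regluing with framing $\varepsilon$, I would set up a van Kampen / Wirtinger-type description of the composite $(m_-)^{-1}\circ m_+$ on $\pi_1$. Pushing $\ell \subset \Sigma \times \{+1\}$ across the surgered region, it crosses the surgery solid torus once for each intersection $p_i$ of $\ell$ with $\gamma$ (the points where the projection of $K$ meets $\ell$), and at each crossing it acquires a conjugate of the core curve of the reglued torus. Since $K$ projects to $\gamma$ and the framing is $\varepsilon$, this core represents $[\gamma]^{\varepsilon}$ suitably based, so to leading order the surgery action is the ``insertion'' map
\[
\ell \longmapsto \ell_{*p_1}(\gamma_{p_1})^{\varepsilon\varepsilon_1}\,\ell_{p_1p_2}\cdots (\gamma_{p_n})^{\varepsilon\varepsilon_n}\,\ell_{p_n*},
\]
in the manner of Proposition~\ref{prop:quotient}. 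The delicate point is to record the corrections coming from the self-crossings of $\gamma$, where two strands of $K$ link one another and contribute to the regluing; these corrections live in $\Gamma_{2k}\pi/\Gamma_{2k+1}\pi$.

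Next I would compute the right-hand (twist) side. Writing $(t_\gamma)^\varepsilon = \exp(\varepsilon\,\sigma(L(\gamma)))$ and using $[\gamma]\in\Gamma_k\pi$ together with the estimate \eqref{eq:sigma-mn}, the derivation $\sigma(L(\gamma))$ raises the $I$-adic degree by $2k-2$, so modulo $\widehat{I^{2k+1}}$ only a bounded number of terms of the exponential survive. The key computation is the closed form
\[
\sigma\big(f(\gamma)\big)(\ell)\,\ell^{-1} = \sum_{i=1}^{n} \varepsilon_i\, \delta_i\, f'(\delta_i),
\qquad \delta_i := \ell_{*p_i}(\gamma_{p_i})\,\overline{\ell_{*p_i}},
\]
valid for any power series $f$ (it comes from the fact that $\gamma^m$ meets each $p_i$ exactly $m$ times, giving $\sigma(\gamma^m)(\ell)\,\ell^{-1}=m\sum_i\varepsilon_i\delta_i^m$), which for $f=L$, where $xL'(x)=\log x$, yields $\sigma(L(\gamma))(\ell)\,\ell^{-1}=\sum_i \varepsilon_i \log\delta_i$. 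Exponentiating and reorganizing via the Baker--Campbell--Hausdorff formula, I would show that $(t_\gamma)^\varepsilon(\ell)$ reduces modulo $\hpi_{2k+1}$ to the same insertion expression as the surgery side: for $k\ge 3$ only the linear term $\varepsilon\sum_i\varepsilon_i\log\delta_i$ contributes, while the borderline case $k=2$ forces one to keep the quadratic term $\tfrac12\,\sigma(L(\gamma))^2$.

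I expect the main obstacle to be precisely this matching in top degree: showing that the framing and linking contributions of $K$ at the self-intersections of $\gamma$ agree with the higher-order terms produced by exponentiating $\sigma(L(\gamma))$. A clean way to organize this, which I would pursue in parallel, is to pass to the associated graded and compare the leading Johnson homomorphism of $U_K$ with $\log(t_\gamma)$ through the diagrammatic formula of Theorem~\ref{thm:diagram}: both should equal $\tfrac12\,\{\gamma\}_k \hbox{\textbf{\,-\! -\! -}\,} \{\gamma\}_k$ in degree $2k-2$ (cf.\ Proposition~\ref{prop:one_two} and the bracket \eqref{eq:Lie}), which would simultaneously pin down the agreement at the relevant order and establish independence of the chosen resolution $K$.
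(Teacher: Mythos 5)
Your overall strategy --- explicit formulas for both vertical maps, obtained on the twist side by commutator calculus refining Proposition~\ref{prop:quotient} and on the surgery side by cut-and-paste analysis of $U_K$, followed by a term-by-term comparison --- is exactly the route the paper indicates for the proof from \cite{KM19}. The pieces you actually carry out are sound: the identity $\sigma(\gamma^m)(\ell)\,\ell^{-1}=m\sum_i\varepsilon_i\delta_i^m$, hence $\sigma(L(\gamma))(\ell)\,\ell^{-1}=\sum_i\varepsilon_i\log\delta_i$, is correct, and so is the degree count showing that $\tfrac12\sigma(L(\gamma))^2(\ell)$ lies in $\widehat{I^{4k-4}}$ and is therefore negligible modulo $\widehat{I^{2k+1}}$ exactly when $k\geq 3$.

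The gap is that the only content of the theorem beyond Proposition~\ref{prop:quotient} is the agreement of the two sides at the level $\Gamma_{2k}\pi/\Gamma_{2k+1}\pi$, and your proposal neither performs this matching nor supplies a tool that could. Concretely, modulo $\widehat{I^{2k+1}}$ the element $t_\gamma(\ell)\,\ell^{-1}=1+\sum_i\varepsilon_i\log\delta_i+\cdots$ differs from the insertion expression $\prod_i\delta_i^{\varepsilon_i}$ by explicit quadratic terms in the $\log\delta_i\in\widehat{I^k}$ (a square term plus Baker--Campbell--Hausdorff commutators), all living in $\widehat{I^{2k}}$ and generically nonzero; on the surgery side the corresponding degree-$2k$ corrections come from the framing and from the linking of the strands of $K$ over the self-crossings of $\gamma$, and the whole point of \cite{KM19} is to express both sets of corrections through the homotopy intersection form $\eta$ applied to the class of $[\gamma]$ modulo $\Gamma_{k+2}\pi$ and check that they coincide. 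You name this as ``the main obstacle'' but leave it open, and the fallback you propose --- comparing leading terms via Theorem~\ref{thm:diagram} and Proposition~\ref{prop:one_two}(i) in diagram degree $2k-2$ --- cannot close it: a derivation component of degree $d$ sends $H=\frakL_1$ into $\frakL_{1+d}$, so the degree-$(2k-2)$ part of $\log(t_\gamma^\theta)$ only controls the action modulo $\Gamma_{2k}\pi$ (which is Proposition~\ref{prop:quotient} again), whereas the action modulo $\Gamma_{2k+1}\pi$ also requires the degree-$(2k-1)$ part, determined by the degree-$(k+1)$ coefficient of $\log\theta([\gamma])$, together with the matching computation of the Johnson-type invariant of $U_K$ in that same degree. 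Until that degree-$(2k-1)$ comparison is carried out on both sides, your argument only proves commutativity of the diagram modulo $\hpi_{2k}$, not $\hpi_{2k+1}$.
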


The proof of Theorem \ref{thm:curve_knot} is based on some explicit formulas for 
$\rho_{2k}(U_K)\in \Aut({\pi}/{\Gamma_{2k+1} \pi})$ and
$t_\gamma \in \Aut(\hpi / \hpi_{2k+1} )$:
the one for $t_\gamma$ 
is obtained by commutator calculus and refines Proposition \ref{prop:quotient};
the one for $\rho_{2k}(U_K)$  is proved by surgery calculus.
Both formulas are expressed in terms of the class of $[\gamma]$ modulo $\Gamma_{k+2}\pi$
using the homotopy intersection form $\eta$, and they turn out to be the same.

\begin{rem}
Of course, Theorem \ref{thm:curve_knot} reduces to a  basic fact if $\gamma$ is simple:  in that case,
there is only one way to ``resolve'' $\gamma$ to $K$ and, by Lickorish's trick \cite[Proof of Theorem 2]{Lic62},
the cobordism $U_K$ is merely the mapping cylinder of the usual Dehn twist $t_\gamma$.
\end{rem}

Considering arbitrary closed curves $\gamma \subset \Sigma$, one  derives from
Theorem~\ref{thm:curve_knot} a few applications to the study of homology cylinders.
A \emph{homology cylinder} is an $M\in \calC$ such that
$m_+=m_-\colon H_1(\Sigma) \to H_1(M)$ or, equivalently, such that $\rho_1(M)$ is the identity of $\pi/\Gamma_2\pi$. 
Examples of homology cylinders include all the cobordims $U_K$ of the type considered in Theorem~\ref{thm:curve_knot}.
Homology cylinders constitute a submonoid 
$
\calI\calC \subset \calC
$
on which it is important to compute the generalized Dehn--Nielsen representation
$$
\hat\rho : \calI\calC \longrightarrow \operatorname{IAut}_\partial(\hpi).
$$
Here $\operatorname{IAut}_\partial(\hpi)$ is the subgroup of $\operatorname{Aut}_\partial(\hpi)$
that gives the identity on the associated graded of $\hpi$. 
Through the correspondence \eqref{eq:correspondence},
$\operatorname{IAut}_\partial(\hpi)$ is identified with the group  $\operatorname{IAut}_\partial( \frakM(\pi) )$
of automorphisms of the Malcev Lie algebra $\frakM(\pi)$ that induce the identity on the associated graded and fix  $\log \zeta$. 
Furthermore, for  any symplectic expansion $\theta$ of $\pi$, 
the latter group is identified with the Lie algebra $\operatorname{Der}_\omega(\widehat \frakL)$ through
the map  $\psi \mapsto \log( \theta \circ \psi \circ \theta^{-1})$.
Thus, we can consider the composition
$$
\xymatrix{
\calI\calC \ar[r]^-{\hat \rho}  \ar@{-->}@/_2pc/[rr]^-{\varrho^\theta} & \operatorname{IAut}_\partial(\hpi)
\cong \operatorname{IAut}_\partial( \frakM(\pi) ) \cong
\operatorname{Der}_\omega(\widehat \frakL)  \ar[r]^-{\Xi^{-1}} & \widehat{\calT}}
$$

\vspace{0.1cm}
\noindent
as a diagrammatic version of the generalized Dehn--Nielsen representation.

\begin{rem}
For some instances of symplectic expansions $\theta$, the composition $\varrho^\theta$ gives
the ``tree-reduction'' of the LMO homomorphism \cite{Ma12}, 
which is a fundamental invariant of homology cylinders in quantum topology.
\end{rem}

Consequently, by combining Theorem \ref{thm:diagram} and Theorem \ref{thm:curve_knot}, 
we obtain a partial, but explicit, computation of $\hat \rho(U_K)$ for any knot $K \subset U$
whose homotopy class $[K]$ belongs to $\Gamma_{k} \pi_1(U) \cong \Gamma_k \pi$:
$$
\varrho^\theta(U_K) \equiv \frac{1}{2} \cdot\log \theta ([K]) \hbox{\textbf{\,-\! -\! -}\,}   \log \theta ([K])
+ \hbox{(trees of degree $\geq 2k$)}
$$
The authors do not know whether this identity holds true in higher degrees. 
In particular, the possibility that $\varrho^\theta(U_K)$  depends only 
on the homotopy class $[K] \in \pi$ is not excluded yet.

To conclude, we mention that Theorem \ref{thm:curve_knot} also implies an analogue of Proposition \ref{prop:one_two} where generalized Dehn twists are replaced by surgeries.
This provides new surgery formulas for the Johnson homomorphisms, 
from which an alternative  proof of the surjectivity of the Johnson homomorphisms \cite{GL05,Habegger00} can be derived.
We refer to \cite{KM19} for further details.

\section{Skein versions of generalized Dehn twists}
\label{sec:skein}

In this section, we present formulas giving the action of the (usual) Dehn twists
on skein algebras of the thickened surface $\Sigma \times [-1,+1]$ in terms of commutators in these algebras.
These formulas lead to some ``skein versions'' of the generalized Dehn twists. 
Recall that, for simplicity, the surface $\Sigma = \Sigma_{g,1}$ is  assumed 
to be compact oriented connected of genus $g$ with one boundary component.
In this section, in addition to the previously fixed basepoint $*\in \partial \Sigma$, 
we take a second basepoint $\bullet \neq *$ in $\partial \Sigma$.

\subsection{Skein algebras and skein modules}
\label{subsec:skein}

We start by recalling the two versions of ``skein algebras'' with which we will work. 

Let $\skein (\Sigma)$ be the \emph{Kauffman bracket skein module} of the thickened surface $\Sigma \times [-1,+1]$. 
This is the quotient of the free $\Q [[A+1]]$-module 
generated by the (isotopy classes of) framed unoriented links in $\Sigma \times [-1,+1]$
modulo the relations of  
Figure \ref{fig:Kauffman_bracket_skein_relation}.
Similarly, we define the Kauffman bracket skein module $\skein(\Sigma, \bullet, *)$ 
by considering the framed unoriented tangles \emph{with endpoints in} $\{ \bullet,*\}$. 
Here tangles have (in addition to closed components) 
a unique component homeomorphic to the interval, called the \emph{string}, 
such that one of its endpoints is in $\{ \bullet \} \times (-1,+1)$ and the other in $\{ * \} \times (-1,+1)$;
furthermore, the framing of the string is given at its endpoints by the positive direction 
of the $(-1,+1)$ factor. 
Note that $A= -1+(A+1)$ and 
$$
A^{-1}= -\frac{1}{1-(A+1)} = -1 - \sum_{m\geq 1} (A+1)^m
$$ 
are viewed here as power series in $(A+1)$.

\begin{rem}
 The Kauffman bracket skein module was introduced by Przytycki \cite{Przy91} for links in oriented $3$-manifolds.
There are several versions for tangles in a thickened surface with boundary.
For a full detail of the version that we work with here, see \cite[Definition 3.2]{TsujiCSAI}.
Note that our version is different from Muller's one \cite{Muller}, which involves additional skein relations.
\end{rem}

\begin{figure}[ht]
\begin{minipage}[b]{0.45\hsize}
  \centering
 \input{Kauffman_bracket_skein_relation.tex}
\caption{$\skein (\Sigma)$}
\label{fig:Kauffman_bracket_skein_relation}
\end{minipage}
\hspace{0.5cm}
\begin{minipage}[b]{0.45\hsize}
  \centering
  \input{HOMFLY-PT_skein_relation.tex}
\caption{$\tskein (\Sigma)$}
\label{fig:HOMFLY-PT_skein_relation}
\end{minipage}
\end{figure}

Let $\tskein (\Sigma)$ be the \emph{HOMFLY-PT skein module} of $\Sigma \times [-1,+1]$.
This  is the quotient of  the free  $\Q [ \rho ] [[h]]$-module  generated by the (isotopy classes of)
framed oriented links in $\Sigma \times [-1,+1]$
modulo the relations of  Figure \ref{fig:HOMFLY-PT_skein_relation}.
Similarly, we define the HOMFLY-PT skein module $\tskein(\Sigma, \bullet, *)$ by considering framed oriented tangles with a unique string, which is oriented from ${\{ \bullet \} \times (-1,+1)}$ to $\{ * \} \times (-1,+1)$
and framed at its endpoints  by the positive direction  of the $(-1,+1)$ factor. 

\begin{rem}
The HOMFLY-PT skein module for links in oriented $3$-manifolds was introduced in the works of Przytycki \cite{Przy91} and Turaev \cite{Tur91}.
Turaev shows that his version of the skein module
gives a quantization of the Goldman bracket.
Here we consider a version for tangles in a thickened surface with boundary.
For more detail, \cite[Definition 3.2]{TsujiHOMFLY-PTskein}.
\end{rem}

\begin{rem} \label{rem:skein_functorial} 
The definitions above of the skein modules work for any compact oriented surface (with non-empty boundary).
Furthermore, they are functorial in the following sense:
suppose that $\Sigma$ and $\Sigma'$ are compact oriented surfaces with non-empty boundary and assume given an orientation-preserving embedding $e \colon \Sigma \times [-1,+1] \to \Sigma' \times [-1,+1]$ of $3$-manifolds, then $e$ induces a $\Q[[A+1]]$-linear map $e_* \colon \skein(\Sigma) \to \skein(\Sigma')$ and a $\Q[\rho][[h]]$-linear map $e_*\colon \tskein(\Sigma) \to \tskein(\Sigma')$.
\end{rem}

In the sequel, we sometimes use the notation $(\calG, \calV)$ in order to refer either 
to  the pair $(\skein ( \Sigma ), \skein (\Sigma, \bullet, * ))$ or to the pair $(\tskein (\Sigma),  \tskein (\Sigma, \bullet, * ))$;
in such a situation, we set
$$
\epsilon_\calG :=\left\{\begin{array}{ll} -A+\gyaku{A} & \hbox{if } \calG = \skein(\Sigma),\\
h  & \hbox{if } \calG = \tskein(\Sigma).
\end{array}\right.
$$
The ``stacking'' operation $ab$ (with $a$ ``over'' $b$) is defined whenever $(a,b)$ is an element of 
$\calG\times \calG$, $\calG\times \calV$ or $ \calV \times \calG$. 
With this operation, $\calG$ is an associative algebra and called the \emph{skein algebra}.
There is also a Lie bracket on $\calG$ defined by renormalizing the algebra commutator operation:
\begin{equation} \label{eq:[-,-]}
[x,x']:= \frac{xx'-x'x}{\epsilon_{\calG}},
\end{equation}
and there is  an action $\sigma$ of the Lie algebra $\calG$ on  $\calV$ defined by
\begin{equation} \label{eq:sigma_ter}
\sigma(x)(y) := \frac{xy-yx}{\epsilon_{\calG}}.
\end{equation}
The operations \eqref{eq:[-,-]} and \eqref{eq:sigma_ter} are well-defined 
(although $\epsilon_{\calG}$ is not invertible in the ground ring): 
 see \cite[\S 3.2]{TsujiCSAI} and \cite[\S 3.3]{TsujiHOMFLY-PTskein}.

\subsection{Comparison of algebras and modules}
\label{subsec:comparison_ma}

We now explain how the skein algebras/modules are related one to the other,
and how they are connected to the constructions of the previous sections like Goldman's Lie bracket. 
We shall define four types of homomorphisms --- for algebras as well as for modules;
these homomorphisms will fit all together into diagrams
\begin{equation} \label{eq:comparison_abs}
\xymatrix{
\qquad\qquad \mathcal{A}(\Sigma)  \supset \tzeroskein (\Sigma) 
\ar@{>}[rr]^-{\psi} \ar@{>>}[d]^{\varpi} & & \mathcal{S}(\Sigma) \ar@{>>}[d]^{\varpi} \\
\qquad\qquad S'(\GL  ) \supset S'( \bGL  ) \ar@{>>}[rr]^-{\psi} & &  \hskein (\Sigma ). \\
}
\end{equation}
and
\begin{equation} \label{eq:comparison_rel}
\xymatrix{
\quad \qquad\qquad \mathcal{A}(\Sigma,\bullet,*)  \supset \tzeroskein (\Sigma,\bullet,*) 
\ar@{>}[rr]^-{\psi} \ar@{>>}[d]^{\varpi} & & \mathcal{S}(\Sigma,\bullet,*) \ar@{>>}[d]^{\varpi} \\
\quad \qquad\qquad S'(\GL  )  \otimes \Q \pi_{\bullet,*}\supset S'( \bGL  )  \otimes \Q \pi_{\bullet,*}
\ar@{>>}[rr]^-{\psi} & &  \hskein (\Sigma,\bullet,*) \\
}
\end{equation}
whose notations will be explained one by one. 
Those diagrams will be commutative in the following sense:
for any $x\in \tzeroskein (\Sigma)$ such that $\varpi(x)\in S'( \bGL  )$
(resp., any $x\in \tzeroskein (\Sigma,\bullet,*)$ such that $\varpi(x)\in S'( \bGL  ) \otimes \Q\pi_{\bullet,*}$),
we have $\psi(\varpi(x))= \varpi(\psi(x))$.\\

\noindent 
\textbf{$\star$ Maps $\varpi\colon \tskein ( \Sigma ) \to S' (\GL  )$ and
$\varpi \colon \tskein (\Sigma, \bullet,*) \to  S' (\GL  ) \otimes \Q \pi_{\bullet,*}$.}\\[0.2cm] 
Recall that $\vert \pi \vert$ is the set of conjugacy classes of $\pi$,
with natural projection $\zettaiti{ -}\colon \Q \pi \to \GL$,
and that $[ - , - ]\colon  \GL   \times \GL   \to \GL  $ denotes Goldman's Lie bracket
(see  Remarks \ref{rem:operations} and \ref{rem:piconj}). 
Let $ \pi_{\bullet,*}:= \pi_{1}(\Sigma,\bullet,*)$ be the set of homotopy classes of paths connecting $\bullet$ to $*$.
There is a groupoid version of the action \eqref{eq:sigma} 
\begin{equation} \label{eq:sigma_bis}
\sigma\colon \GL  \otimes  \Q \pi_{\bullet,*} \longrightarrow \Q \pi_{\bullet,*}
\end{equation}
which is defined in  the same manner (see \cite{KKgroupoid,KK16}) and makes $\Q\pi_{\bullet,*}$ into a $\GL$-module. 

Let $S'(\GL  )$ be the quotient of the symmetric algebra of $\GL$ where the trivial loop is identified with the constant $1\in \Q$.
Since the trivial loop is central in $\GL$, one can extend by the Leibniz rule the Goldman bracket 
to $S'(\GL  )$ which turns into a Poisson algebra.
The resulting Lie bracket of $S'(\GL  )$ and the action \eqref{eq:sigma_bis} merge 
to give an action $\sigma$ of the Lie algebra $S'(\GL  )$ on the $\Q$-vector space $S' (\GL  ) \otimes \Q \pi_{\bullet,*}$.
This action is characterized by the following two properties:
\begin{itemize} 
\item[(i)] for any degree one element $w \in \GL$, for all $v\in S'(\GL)$ and $y\in \Q\pi_{\bullet,*}$,
\[
\sigma(w)(v\otimes y) = [w,v] \otimes y + v \otimes \sigma(w) (y);
\]
\item[(ii)] for any $w, w' \in S'(\GL)$, for all $v\in S'(\GL)$ and $y\in \Q\pi_{\bullet,*}$,
\[
\sigma(ww')(v\otimes y) =
w' \big( \sigma(w)(v\otimes y) \big) +  w \big( \sigma(w')(v \otimes y) \big)
\]
where $w'$ acts on the first factor of $\sigma(w)(v\otimes y)$ by multiplication.
\end{itemize}

There is a surjective $\Q$-linear map 
$\varpi\colon \tskein (\Sigma) \to S' (\GL  )$
 which maps any  framed oriented link $L=L_1 \cup \cdots \cup L_j$ 
 to the product $[L_1]\cdots[L_j] \in  S' (\GL  )$ of (the free homotopy classes of) its components projected onto $\Sigma$.
By convention, the empty link is mapped to the unit $1\in  S' (\GL  )$.
Furthermore, assigning $0$ to the variable $h$ and $1/2$ to the variable $\rho$ ensures that $\varpi$ is well-defined.
Besides, there is a surjective $\Q$-linear map 
$
\varpi\colon \tskein (\Sigma, \bullet,*) \to S' (\GL  ) \otimes \Q \pi_{\bullet,*}
$
which is defined in a similar way by projecting the string of a tangle  to (the homotopy class of) its projection.
It  is easily  verified 
(using the idea of Turaev \cite[proof of Theorem 3.3]{Tur91}) that the diagrams 
$$
\xymatrix{ \tskein (\Sigma)  \otimes \tskein (\Sigma)  \ar[r]^-{[-,-]} \ar[d]_-{\varpi \otimes \varpi} 
& \tskein (\Sigma)  \ar[d]^-{\varpi}   \\
 S' (\GL  )  \otimes  S' (\GL  ) \ar[r]_-{[-,-]} &  S' (\GL  ) 
}
$$
and 
$$
\xymatrix{ \tskein (\Sigma)  \otimes \tskein (\Sigma, \bullet,*) \ar[r]^-\sigma \ar[d]_-{\varpi \otimes \varpi} 
& \tskein (\Sigma, \bullet,*) \ar[d]^-{\varpi}   \\
 S' (\GL  )  \otimes \big( S' (\GL  ) \otimes \Q \pi_{\bullet,*} \big) \ar[r]_-\sigma &  S' (\GL  ) \otimes \Q \pi_{\bullet,*}
}
$$
are commutative: in other words, 
 $\varpi\colon \tskein ( \Sigma ) \to S' (\GL  )$ is a Lie algebra homomorphism and the map 
$\varpi\colon \tskein (\Sigma, \bullet,*) \to  S' (\GL  ) \otimes \Q \pi_{\bullet,*}$ of Lie modules is equivariant over this homomorphism.\\

\noindent
\textbf{$\star$ Maps  $\psi  \colon \tzeroskein (\Sigma) \to \skein (\Sigma)$ 
and $\psi \colon \tskein (\Sigma, \bullet,*) \to \skein (\Sigma, \bullet,*) $.}\\[0.2cm] 
It is easily verified from its defining relations that the  $\Q[\rho][[h]]$-module
$\tskein (\Sigma)$ has a direct sum decomposition 
\begin{equation*}
\tskein (\Sigma)= \bigoplus_{x \in H_1 (\Sigma; \Z)}	 \tskein_x (\Sigma)
\end{equation*}
where $\tskein_x (\Sigma)$ denotes the  $\Q[\rho][[h]]$-submodule 
generated by links with total homology class equal to $x$.
Then, for any $x, y \in  H_1 (\Sigma;\Z)$, we have 
$ \tskein_x (\Sigma) \cdot \tskein_y (\Sigma)  \subset \tskein_{x+y} (\Sigma)$
and $[\tskein_x (\Sigma), \tskein_y (\Sigma)] \subset \tskein_{x+y}(\Sigma)$.

Let $\psi' \colon \tskein (\Sigma) \to \skein (\Sigma)$ be the $\Q$-linear map 
defined  by $\psi'(L):= (-A)^{w ( L )}\,  L$ for any framed oriented link $L$, 
 while assigning  $-A^2+A^{-2} $ to $h$
and 
\begin{eqnarray*}
\frac{\log A^4}{-A^2+A^{-2}} &=& \frac{4\log\big((1-(A+1)\big)}{-\big(1-(A+1)\big)^2 + \big(1-(A+1)\big)^{-2}} \\
&=& -1 + \frac{2}{3}(A+1)^2 + \cdots \ \in \Q[[A+1]]
\end{eqnarray*}
to $\rho$ so that $\exp (\rho h)=A^4$.
Here  $w(L)$ is the \emph{total framing number} of~$L$,
which can be computed as the difference between the total number of positive crossings 
and  the total number of negative crossings in a projection diagram of $L$.  
(See \cite[Proposition 7.15]{TsujiHOMFLY-PTskein}.)
Similarly, we define a $\Q$-linear map $\psi' \colon \tskein (\Sigma,\bullet,*) \to \skein (\Sigma,\bullet,*)$. 
We remark that, for any framed oriented link $L$ and any framed  oriented tangle $T$,
$$
\psi' (L T)=(-A)^{w(L,T)}\, 
\psi' (L)\, \psi' (T)
$$
and
$$
\psi' (TL)=(-A)^{w(T,L)}\,
\psi'(T)\, \psi'(L)
$$
where  $w(L,T)=-w(T,L)$ is the intersection number $\omega([L],[T])$
of the homology classes of $L$ and $T$ projected onto $\Sigma$. 
Hence, for any $x \in \tskein_0 (\Sigma)$ and $y \in \tskein (\Sigma)$, we have 
\begin{equation} \label{eq:psi'1}
 \psi'(xy) = \psi'(x)\, \psi'(y), \quad
\psi'(yx) = \psi'(y)\, \psi'(x) 
\end{equation}
and, similarly, for any $x \in \tskein_0 (\Sigma)$ and $z \in \tskein (\Sigma, \bullet,*)$, we have
\begin{equation} \label{eq:psi'2}
 \psi'(xz) = \psi' (x)\, \psi'(z), \quad
\psi'(zx) = \psi' (z)\, \psi'(x). 
\end{equation}
Next, we renormalize the above two maps $\psi'$ by setting $\psi := \frac{1}{A+A^{-1}} \psi'$ where 
$$
\frac{1}{A+A^{-1}} = \frac{1}{-2 - \sum_{m\geq 2}(A+1)^m} 
= - \frac{1}{2}+ \frac{1}{4} (A+1)^2+ \cdots \ \in \Q[[A+1]].
$$ 
It follows from~\eqref{eq:psi'1} that 
$\psi \colon \tzeroskein  (\Sigma) \to \skein (\Sigma) $ is a Lie algebra homomorphism,
and it follows from  \eqref{eq:psi'2} that 
the map  $\psi \colon \tskein (\Sigma,\bullet,*) \to \skein (\Sigma, \bullet,*) $ of Lie modules 
is equivariant over this homomorphism. \\
 
 \noindent
\textbf{$\star$ Maps  $\varpi \colon \skein (\Sigma) \to \hskein (\Sigma)$ 
and $\varpi \colon \skein (\Sigma,\bullet,*) \to \hskein (\Sigma,\bullet,*)$.}\\[0.2cm]
Let $\hskein ( \Sigma)$  be the \emph{Kauffman bracket skein algebra  ``at $A:=-1$''},
namely the quotient  of $\skein(\Sigma)$  by $(A+1)\, \skein (\Sigma)$,
and let $\varpi \colon \skein (\Sigma) \to \hskein (\Sigma)$  be the canonical projection. 
Define the quotient space $\hskein (\Sigma,\bullet,*)$ in a similar way, and
let  $\varpi \colon \skein (\Sigma,\bullet,*) \to \hskein (\Sigma,\bullet,*)$ be the canonical projection.
In those quotients, the class of a tangle  depends only on its homotopy class,  and hence $\hskein (\Sigma)$ is a commutative algebra.
The Lie bracket of $\skein (\Sigma)$
and the Lie action of $\skein (\Sigma)$ on $\skein (\Sigma, \bullet,*)$ 
descend to $\hskein (\Sigma)$ and $\hskein (\Sigma,\bullet,*)$, respectively.
 Hence, just like $\skein (\Sigma)$, the quotient $\hskein (\Sigma)$ is a Poisson algebra.\\

\noindent
\textbf{$\star$ Maps  $\psi \colon S' (\bGL  ) \to \hskein (\Sigma)$ and 
$\psi \colon S'(\bGL  ) \otimes  \Q \pi_{\bullet,*} \to \hskein (\Sigma , \bullet, *)$.}\\[0.2cm]
 Since the works of Goldman \cite{Gol86}, Turaev \cite{Tur91}, Bullock \cite{Bullock97} and others, 
there is a well-known relationship between
the symmetric algebra  of the Goldman Lie algebra, the $\hbox{SL}_2$-representation algebra of $\pi$ 
and the Kauffman bracket skein algebra  ``at $A:=-1$''. 
Let us recall this relationship in our setting.

Goldman \cite{Gol86} introduced the subspace $\bGL  $ of $\GL  $ generated by the elements 
\begin{equation*}
\zettaisq{x} :=
 \zettaiti{x} + \zettaiti{x^{-1}}
\end{equation*} 
for all $x\in \pi$, which correspond to 
homotopy classes of unoriented loops in~$\Sigma$.
It turns out that $\bGL  $ is a Lie subalgebra of $\GL  $.
We denote by $S'(\bGL  )$ the quotient of the symmetric algebra of $\bGL$ by the relation $\zettaisq{1} = 2 \in \Q$.
Let $\psi \colon S'(\bGL  ) \to \hskein (\Sigma)$ be the algebra homomorphism 
mapping, for all $x\in \pi$, the element $\zettaisq{x}$ 
to  \emph{minus} the class of any knot 
that projects onto  the unoriented loop corresponding 
to $\zettaisq{x}$. In a similar way, we  have  
a $\Q$-linear map $\psi \colon S'(\bGL  ) \otimes  \Q \pi_{\bullet,*} \to \hskein (\Sigma , \bullet, *)$.
It can be verified that the maps $\psi$ have the following properties.

\begin{prop} \label{prop:psi}
\begin{enumerate} 
\item[(i)] For all $v,v'\in S'(\bGL  )$, $\psi ([v,v'])= [\psi (v), \psi (v')]$.
\item[(ii)] The map $\psi \colon S' (\bGL  ) \to \hskein (\Sigma)$ is surjective and its kernel is the ideal generated by 
$\zettaisq{xx'}+ \zettaisq{x^{-1}x'}  -  \zettaisq{x}\zettaisq{x'}$ for all $x,x' \in \pi$.
\item[(iii)] For all $v\in S'(\bGL  )$ and $w\in \Q \pi_{\bullet,*}$, 
$\psi \big(\sigma (v)(w)\big)= \sigma \big(\psi(v)) (\psi (w)\big)$.
\item[(iv)] The map $\psi \colon S'(\bGL  ) \otimes  \Q \pi_{\bullet,*} \to \hskein (\Sigma , \bullet, *)$ is surjective
and its kernel  is  generated as an $S' (\bGL)$-module by the following two types of elements:
\begin{itemize} 
\item $1 \otimes  (rxr' + rx^{-1}r')-
\zettaisq{x} \otimes (rr')$, where $r\in \pi_{\bullet,*}$ and $r',x \in \pi$; 
\item $(\zettaisq{xx'} +\zettaisq{x^{-1}x'}-
\zettaisq{x}\zettaisq{x'}) \otimes r$, where $x,x' \in \pi$ and $r\in \pi_{\bullet,*}$.
\end{itemize}
\end{enumerate}
\end{prop}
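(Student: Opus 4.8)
The plan is to split the four assertions into two kinds. Statements (i) and (iii) concern the Lie bracket and the Lie action, and I would prove them by a computation localized at transverse double points, in the spirit of Turaev's argument already invoked above for $\varpi$. Statements (ii) and (iv) concern surjectivity and the precise kernels, and I would deduce them from the classical identification of the Kauffman bracket skein algebra ``at $A=-1$'' with the universal $\mathrm{SL}_2$--trace (character) algebra of $\pi$, due to Bullock \cite{Bullock97} and Przytycki--Sikora. Throughout I would use that $\pi$ is free of rank $2g$, so that $\Hom(\pi,\mathrm{SL}_2)$ is smooth and no reducedness issues intervene; under the identification, $\psi(\zettaisq{x})$ corresponds to $-\operatorname{tr}(\rho(x))$ on $\Hom(\pi,\mathrm{SL}_2)$.

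For (i), I would represent two generators $\zettaisq{x}$, $\zettaisq{y}$ by knots $K_x,K_y$ whose projections meet transversally, and expand the commutator $K_xK_y-K_yK_x$ in $\skein(\Sigma)$ by applying the Kauffman relation at each crossing between the two projections. Since $\epsilon_{\skein(\Sigma)}=-A+A^{-1}$ vanishes at $A=-1$, the bracket \eqref{eq:[-,-]} extracts the coefficient of $(A+1)$ in $K_xK_y-K_yK_x$, which is supported at the double points; each double point contributes the smoothed loop weighted by its intersection sign, and after accounting for the sign in $\psi=-(\text{knot})$ this reproduces exactly Goldman's bracket $[\zettaiti{x},\zettaiti{y}]$. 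One then extends from generators to all of $S'(\bGL)$ by the Leibniz rule, which both the Poisson bracket of $S'(\bGL)$ and that of $\hskein(\Sigma)$ obey. Statement (iii) is the same localized computation carried out for one loop $\zettaisq{x}$ and one string $r$, the intersection points of the loop with the string now producing the action \eqref{eq:sigma_bis}.

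For (ii), surjectivity is immediate: at $A=-1$ a skein class depends only on the homotopy classes of the underlying unoriented loops, so every generating knot of $\hskein(\Sigma)$ equals $-\psi$ of some $\zettaisq{x}$, and such knots generate. For the kernel, resolving a single crossing at $A=-1$ yields the skein identity $K_xK_{x'}+K_{xx'}+K_{x^{-1}x'}=0$, which transports to $\psi(\zettaisq{xx'}+\zettaisq{x^{-1}x'}-\zettaisq{x}\zettaisq{x'})=0$ (using $\zettaisq{x^{-1}x'}=\zettaisq{xx'^{-1}}$); this is the skein avatar of the fundamental trace identity $\operatorname{tr}(\rho(x)\rho(x'))+\operatorname{tr}(\rho(x)^{-1}\rho(x'))=\operatorname{tr}(\rho(x))\operatorname{tr}(\rho(x'))$. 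That these elements \emph{generate} the whole kernel is exactly Procesi's second fundamental theorem for $2\times 2$ trace identities, equivalently the Bullock--Przytycki--Sikora presentation, which I would transport along the isomorphism.

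Finally, for (iv) I would identify $\hskein(\Sigma,\bullet,*)$ with the module of $\mathrm{SL}_2$--covariants, i.e. functions on $\Hom(\pi,\mathrm{SL}_2)$ valued in the defining representation along the string, so that $\psi(\zettaisq{x}\otimes r)$ becomes $-\operatorname{tr}(\rho(x))$ times the matrix $\rho(r)$. Under this dictionary the first listed family is the Cayley--Hamilton relation $\rho(x)+\rho(x)^{-1}=\operatorname{tr}(\rho(x))\cdot\mathrm{id}$ inserted along the string, and the second family is the trace identity of (ii) multiplied by a path; both therefore lie in the kernel. The hard part will be showing that these two families generate the kernel \emph{as an $S'(\bGL)$--module}. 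I expect to handle this by a normal-form argument: use Cayley--Hamilton to reduce any string to one carrying at most a single matrix factor, pushing every loop that crosses the string into the scalar coefficients, and then use the closed-loop relations of (ii) to normalize those coefficients; this reduces the module-kernel computation to the already-settled algebra kernel. Verifying that the reduction terminates and that the resulting normal form is well defined --- independent of the order in which the crossings are resolved --- is the delicate point, and is where I would spend most of the effort.
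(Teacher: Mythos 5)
Your treatment of (i)--(iii), and of the ``easy inclusion'' halves of (ii) and (iv), follows essentially the same route as the paper: statement (i) is Turaev's and Bullock's expression of the Goldman bracket as the leading term of the skein commutator (the paper simply calls it ``well-known''), statement (iii) is the same crossing-by-crossing computation with one closed component replaced by the string, and for (ii) the paper likewise reduces the kernel computation to the fact that the trace map $\hskein(\Sigma)\to \Q[X_\pi]^{\operatorname{GL_2}}$ is an isomorphism (Brumfiel--Hilden, Przytycki--Sikora) combined with the classical description of $2\times 2$ trace identities. One small imprecision to track: the bracket \eqref{eq:[-,-]} divides by $\epsilon_{\skein}=-A+A^{-1}=-2(A+1)+O\big((A+1)^2\big)$, so it extracts $-\tfrac12$ times the coefficient of $(A+1)$ rather than that coefficient itself; this constant must be balanced against the sign in $\psi(\zettaisq{x})=-K_x$ to land exactly on Goldman's bracket, but that is bookkeeping.

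The genuine gap is in (iv), and you have located it yourself: that the two listed families \emph{generate} the kernel as an $S'(\bGL)$-module is the entire content of the statement, and your proposal replaces its proof by a plan --- a normal form obtained by Cayley--Hamilton reduction, whose termination and well-definedness you explicitly defer. Moreover, the identification of $\hskein(\Sigma,\bullet,*)$ with a module of $\operatorname{SL}_2$-covariants, on which that plan rests, is itself not among the cited results: Brumfiel--Hilden and Przytycki--Sikora treat only the closed-loop algebra, so a relative (one-string) version of their theorem would first have to be established, and doing so is essentially equivalent to proving (iv). The paper does not spell this out either --- it states the proposition with ``it can be verified,'' the detailed arguments living in Tsuji's papers --- but as a self-contained argument your proposal establishes (i)--(iii) and only half of (iv).
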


Statement (i) is the well-known expression of  the Goldman bracket as a commutator in the skein algebra:
thus  $\psi \colon S' (\bGL  ) \to \hskein (\Sigma)$ is a Lie algebra homomorphism and, by (iii),
the map $\psi \colon S'(\bGL  ) \otimes  \Q \pi_{\bullet,*} \to \hskein (\Sigma , \bullet, *)$  of Lie modules
is equivariant over this homomorphism.

Statement (ii) is the usual description of the Kauffman bracket skein algebra ``at $A:=-1$'' 
in terms of the fundamental group $\pi$.  
This commutative algebra is known to be isomorphic to the $\operatorname{GL_2}$-invariant part of 
the  coordinate algebra of the affine scheme $X_\pi$ of $\operatorname{SL_2}$-representations  of the group $\pi$.
Specifically, $X_\pi$ is the functor that assigns to any commutative $\Q$-algebra $B$ 
the set of group homomorphisms $\Hom(\pi,\operatorname{SL}_2(B))$.
The action of the group scheme $\operatorname{GL_2}$ on $X_\pi$ given by conjugation of matrices
induces an action of $\operatorname{GL_2}$ on the coordinate algebra $\Q[X_\pi]$ of $X_\pi$: 
we denote by $\Q[X_\pi]^{\operatorname{GL_2}}$ 
the $\operatorname{GL_2}$-invariant part. The \emph{trace map}
$$
\operatorname{tr}\colon S'(\bGL  ) \longrightarrow \Q[X_\pi]^{\operatorname{GL_2}}
$$
is the algebra homomorphism  mapping $\zettaisq{x}$ 
to the regular function that, for any commutative $\Q$-algebra $B$,  
is defined by $\Hom(\pi, \operatorname{SL_2}(B))\ni \rho \mapsto \operatorname{tr}(\rho(x))\in B$. 
Besides, there is another \emph{trace map}
$$
\operatorname{tr}\colon \hskein (\Sigma) \longrightarrow \Q[X_\pi]^{\operatorname{GL_2}}
$$
which is the algebra homomorphism mapping a knot $K$ to the regular function defined by 
$\Hom(\pi, \operatorname{SL_2}(B))\ni \rho \mapsto -\operatorname{tr}(\rho(K))\in B$.
(Here $\pi$ is identified with $\pi_1(\Sigma \times [-1,+1])$.)
Then, we have a commutative diagram
$$
\xymatrix{
S'(\bGL  ) \ar@{->>}[d]_-\psi \ar[r]^-{\operatorname{tr}} & \Q[X_\pi]^{\operatorname{GL_2}}. \\
\hskein (\Sigma) \ar[ru]^-{\operatorname{tr}}_-{\  \cong }}
$$
 The trace map defined on $\hskein (\Sigma)$ is known to be an isomorphism
(see \cite[Proposition 9.1]{BH95} and \cite[Theorem  7.1]{PS00}):
therefore the kernel of the trace map defined on $S'(\bGL )$ coincides with $\ker(\psi)$,
which is described by statement (ii) of Proposition \ref{prop:psi}.
Finally, note that the Poisson bracket in $\Q[X_\pi]^{\operatorname{GL_2}}$ corresponding
to the Poisson bracket in $\hskein (\Sigma)$ via the trace map is an algebraic counterpart
of the Atiyah--Bott Poisson structure on the moduli space of $\operatorname{SL}_2(\mathbb{C})$-representation of $\pi$ \cite{Gol86}.

\subsection{Filtrations}

For both $\calG = \skein(\Sigma)$ and $\calG = \tskein(\Sigma)$, 
the skein algebra~$\calG$ and the corresponding skein module $\calV$ can be endowed with decreasing filtrations.
Denoted by $\{ F^n \calG \}_n$ and $\{ F^n \calV \}_n$, respectively, these filtrations have the following properties.

\begin{enumerate}
\item[(i)]
The stacking operations of $(\calG,\calV)$ are filtration-preserving.

\item[(ii)] 
The Lie bracket $[-,-]$ of $\calG$ and the Lie action $\sigma$ of $\calG$ on $\calV$
are maps of degree $(-2)$: for instance, one has $[F^m \calG, F^n \calG] \subset F^{m+n-2} \calG$. 

\item[(iii)]
The image of the filtration of $\tskein (\Sigma)$ by the surjective map $\varpi \colon \tskein(\Sigma) \to S' (\GL)$ 
is the filtration of $S' (\GL)$ inherited from the $I$-adic filtration of $\Q \pi$ (see Remark \ref{rem:piconj}).

\item[(iv)]
All the maps in the diagrams \eqref{eq:comparison_abs} and \eqref{eq:comparison_rel} are filtration-preserving.

\item[(v)]
Let $e \colon \Sigma \times [-1,+1] \to
\Sigma' \times [-1, +1]$ be an embedding between thickened surfaces as in Remark \ref{rem:skein_functorial}.
Then the induced map $e_* \colon \skein (\Sigma) 
\to \skein (\Sigma')$ and $e_* \colon \tskein (\Sigma) 
\to \tskein (\Sigma')$ are filtration-preserving.
\end{enumerate}

We do not give details of the construction of these filtrations
which are defined by explicit systems of generators;
see \cite[\S 5]{TsujiCSAI} and \cite[\S 5]{TsujiCSApure} for ${\calG = \skein(\Sigma)}$, 
and \cite[\S 4]{TsujiHOMFLY-PTskein} for $\calG = \tskein(\Sigma)$. Instead, we give a few examples.

\begin{example}
\begin{enumerate}
\item[(1)] 
Consider an embedding $e$ of the oriented fatgraph 
$$
\Gamma := \begin{array}{c} 
{\unitlength 0.1in%
\begin{picture}(14.0500,6.0000)(2.0000,-7.0000)%
%
\special{pn 8}%
\special{pa 200 350}%
\special{pa 200 350}%
\special{fp}%
\special{pa 200 350}%
\special{pa 400 350}%
\special{fp}%
%
\special{pn 8}%
\special{ar 600 350 200 200 0.0000000 6.2831853}%
%
\special{pn 8}%
\special{pa 800 350}%
\special{pa 800 350}%
\special{fp}%
\special{pa 800 350}%
\special{pa 1000 350}%
\special{fp}%
%
\special{pn 8}%
\special{ar 1200 350 200 200 0.0000000 6.2831853}%
%
\special{pn 8}%
\special{pa 1400 350}%
\special{pa 1605 350}%
\special{fp}%
%
\special{pn 8}%
\special{pa 1150 100}%
\special{pa 1200 150}%
\special{fp}%
\special{pa 1200 150}%
\special{pa 1150 200}%
\special{fp}%
\special{pa 550 100}%
\special{pa 600 150}%
\special{fp}%
\special{pa 600 150}%
\special{pa 550 200}%
\special{fp}%
%
\special{pn 8}%
\special{pa 200 350}%
\special{pa 200 350}%
\special{fp}%
\special{pa 200 350}%
\special{pa 200 350}%
\special{fp}%
\special{pa 200 350}%
\special{pa 200 650}%
\special{fp}%
\special{pa 200 650}%
\special{pa 1600 650}%
\special{fp}%
\special{pa 1600 650}%
\special{pa 1600 650}%
\special{fp}%
\special{pa 1600 650}%
\special{pa 1600 350}%
\special{fp}%
%
\special{pn 8}%
\special{pa 850 300}%
\special{pa 850 300}%
\special{fp}%
\special{pa 850 300}%
\special{pa 900 350}%
\special{fp}%
\special{pa 900 350}%
\special{pa 850 400}%
\special{fp}%
%
\special{pn 8}%
\special{pa 1150 500}%
\special{pa 1200 550}%
\special{fp}%
\special{pa 1200 550}%
\special{pa 1150 600}%
\special{fp}%
\special{pa 550 500}%
\special{pa 600 550}%
\special{fp}%
\special{pa 600 550}%
\special{pa 550 600}%
\special{fp}%
%
\special{pn 8}%
\special{pa 955 605}%
\special{pa 955 605}%
\special{fp}%
\special{pa 955 605}%
\special{pa 905 650}%
\special{fp}%
%
\special{pn 8}%
\special{pa 910 650}%
\special{pa 950 700}%
\special{fp}%
\end{picture}}
$$  into $\Sigma \times [-1,+1]$.
Then, the linear combination of knots
\[
{\unitlength 0.1in%
\begin{picture}(41.1500,2.7400)(3.6400,-6.2000)%
\put(3.6400,-6.2000){\makebox(0,0)[lb]{$e($}}%
%
\special{pn 8}%
\special{ar 678 470 100 100 3.1415927 6.2831853}%
%
\special{pn 8}%
\special{pa 778 470}%
\special{pa 778 470}%
\special{fp}%
\special{pa 878 470}%
\special{pa 778 470}%
\special{fp}%
%
\special{pn 8}%
\special{ar 978 470 100 100 3.1415927 6.2831853}%
%
\special{pn 8}%
\special{pa 1078 470}%
\special{pa 1128 470}%
\special{fp}%
\special{pa 1128 470}%
\special{pa 1128 620}%
\special{fp}%
\special{pa 1128 620}%
\special{pa 528 620}%
\special{fp}%
\special{pa 528 470}%
\special{pa 528 620}%
\special{fp}%
\special{pa 528 470}%
\special{pa 578 470}%
\special{fp}%
\put(11.6500,-6.2000){\makebox(0,0)[lb]{$)$}}%
\put(12.9800,-6.2000){\makebox(0,0)[lb]{$-\, \, e($}}%
%
\special{pn 8}%
\special{ar 1776 470 100 100 6.2831853 3.1415927}%
%
\special{pn 8}%
\special{pa 1876 470}%
\special{pa 1976 470}%
\special{fp}%
\special{pa 2176 470}%
\special{pa 2226 470}%
\special{fp}%
\special{pa 2226 470}%
\special{pa 2226 620}%
\special{fp}%
\special{pa 2226 620}%
\special{pa 1626 620}%
\special{fp}%
\special{pa 1626 620}%
\special{pa 1626 470}%
\special{fp}%
\special{pa 1676 470}%
\special{pa 1626 470}%
\special{fp}%
%
\special{pn 8}%
\special{ar 2076 470 100 100 3.1415927 6.2831853}%
\put(22.6400,-6.2000){\makebox(0,0)[lb]{$)$}}%
\put(23.9900,-6.2000){\makebox(0,0)[lb]{$-\, \, e($}}%
%
\special{pn 8}%
\special{pa 2726 470}%
\special{pa 2776 470}%
\special{fp}%
\special{pa 2726 470}%
\special{pa 2726 620}%
\special{fp}%
\special{pa 2726 620}%
\special{pa 3326 620}%
\special{fp}%
\special{pa 3326 620}%
\special{pa 3326 470}%
\special{fp}%
\special{pa 3276 470}%
\special{pa 3326 470}%
\special{fp}%
\special{pa 3076 470}%
\special{pa 2976 470}%
\special{fp}%
%
\special{pn 8}%
\special{ar 2876 470 100 100 3.1415927 6.2831853}%
%
\special{pn 8}%
\special{ar 3176 470 100 100 6.2831853 3.1415927}%
\put(33.6400,-6.2000){\makebox(0,0)[lb]{$)$}}%
\put(35.0700,-6.2000){\makebox(0,0)[lb]{$+\, \, e($}}%
%
\special{pn 8}%
\special{pa 3891 470}%
\special{pa 3841 470}%
\special{fp}%
\special{pa 3841 470}%
\special{pa 3841 620}%
\special{fp}%
\special{pa 3841 620}%
\special{pa 4441 620}%
\special{fp}%
\special{pa 4441 620}%
\special{pa 4441 470}%
\special{fp}%
\special{pa 4441 470}%
\special{pa 4391 470}%
\special{fp}%
\special{pa 4191 470}%
\special{pa 4091 470}%
\special{fp}%
%
\special{pn 8}%
\special{ar 3991 470 100 100 6.2831853 3.1415927}%
%
\special{pn 8}%
\special{ar 4291 470 100 100 6.2831853 3.1415927}%
\put(44.7900,-6.2000){\makebox(0,0)[lb]{$)$}}%
%
\special{pn 8}%
\special{pa 2098 372}%
\special{pa 2050 348}%
\special{fp}%
\special{pa 2098 372}%
\special{pa 2050 396}%
\special{fp}%
%
\special{pn 8}%
\special{pa 2892 370}%
\special{pa 2844 346}%
\special{fp}%
\special{pa 2892 370}%
\special{pa 2844 394}%
\special{fp}%
%
\special{pn 8}%
\special{pa 1792 568}%
\special{pa 1744 544}%
\special{fp}%
\special{pa 1792 568}%
\special{pa 1744 592}%
\special{fp}%
%
\special{pn 8}%
\special{pa 3196 568}%
\special{pa 3148 544}%
\special{fp}%
\special{pa 3196 568}%
\special{pa 3148 592}%
\special{fp}%
%
\special{pn 8}%
\special{pa 4306 568}%
\special{pa 4258 544}%
\special{fp}%
\special{pa 4306 568}%
\special{pa 4258 592}%
\special{fp}%
%
\special{pn 8}%
\special{pa 4006 568}%
\special{pa 3958 544}%
\special{fp}%
\special{pa 4006 568}%
\special{pa 3958 592}%
\special{fp}%
%
\special{pn 8}%
\special{pa 697 372}%
\special{pa 649 348}%
\special{fp}%
\special{pa 697 372}%
\special{pa 649 396}%
\special{fp}%
%
\special{pn 8}%
\special{pa 997 372}%
\special{pa 949 348}%
\special{fp}%
\special{pa 997 372}%
\special{pa 949 396}%
\special{fp}%
\end{picture}}%
\]
obtained by restriction of $e$ to four cycles in $\Gamma$ is one of the generators of $F^2 \tskein (\Sigma)$.

\item[(2)]
If $K$ is a framed unoriented knot in $\Sigma \times [-1,+1]$, then $K+2$ is one of the generators of $F^2 \skein (\Sigma)$.
\end{enumerate}
\end{example}

\begin{rem}
As the above discussion indicates, the filtrations on $\calG$ and~$\calV$ could be thought of as analogues of filtrations in other contexts:
one is the $I$-adic filtration of the group algebra $\Q \pi$, and the other is the Vassiliev--Goussarov filtration 
in the theory of finite-type invariants of links.
In fact, when the surface $\Sigma$ is a disk, the filtrations on $\calG$ and~$\calV$ are indeed induced  by the Vassiliev--Goussarov filtration.
(See \cite[\S 5.1]{TsujiCSApure} and \cite[\S 4.1]{TsujiHOMFLY-PTskein}.) 
However, it should be remarked that
when $\Sigma$ has non-trivial fundamental group, 
the filtrations on $\calG$ and~$\calV$
can not be obtained by simply ``resolving double points of singular links''
but needs the more general constructions of Goussarov \cite{Goussarov98}  and Habiro \cite{Habiro00}.
\end{rem}

We shall need the completions of the skein algebra and skein module resulting from those filtrations:
\[
\widehat{\calG} := \varprojlim_m \calG /F^m \calG, \hskip 2em
\widehat{\calV} := \varprojlim_m \calV /F^m \calV.
\]
Thus we obtain 
$\cskein(\Sigma)$, $\cskein(\Sigma,\bullet,*)$,  $\ctskein(\Sigma)$ and  $\ctskein(\Sigma,\bullet,*)$.
From properties (i) to (iv) of the filtrations listed above, 
it follows that the stacking operation, the Lie bracket $[-,-]$, the Lie action $\sigma$, 
and all the maps in diagrams \eqref{eq:comparison_abs} and \eqref{eq:comparison_rel} descend to the completions.
Also, property (v) shows that the maps $e_*$ descend to completions.

\subsection{Dehn twists on skein modules}
\label{subsection:skein_Dehn}

We are now ready to state two variations of the formula \eqref{eq:tCexp} for the  
skein modules $\calV = \skein(\Sigma,\bullet,*)$ and $\calV = \tskein(\Sigma,\bullet, * )$.
The relationship between these and the original version \eqref{eq:tCexp} will be explained in the next subsection. \\

\noindent
\textbf{$\star$ The case of the Kauffman bracket skein module.}\\[0.2cm] 
For a simple closed curve $C$ in $\Sigma$, we define the element $L_{\skein}(C) \in \cskein (\Sigma)$ by
\begin{equation} \label{eq:LKC}
L_{\skein} (C)
:=
\frac{-A+A^{-1}}{4 \log (-A)}
\left(\arccosh \left(\frac{-C}{2} \right) \right)^2
-(-A+A^{-1}) \log (-A).
\end{equation}
Here, we regard $C$ as a framed knot in $\Sigma \times [-1, +1]$ 
using the framing defined by the surface,
and we use the following power series:  
\begin{eqnarray*}
 \left(\arccosh \left(\frac{-x}{2} \right) \right)^2
&= &  -\frac{1}{4} \sum_{i=0}^{\infty}
\frac{i!i!}{(i+1)(2i+1)!} \left(4-x^2\right)^{i+1}  \\
&=& -(x+2)- \frac{1}{12}(x+2)^2 + \cdots \ \in \Q[[x+2]] 
\end{eqnarray*} 
\begin{eqnarray}
\label{eq:fraction} \frac{-A+A^{-1}}{4 \log (-A)} 
&=& \frac{1-(A+1) - \sum_{n\geq 0} (A+1)^n}{-4\sum_{n\geq 1} (A+1)^n/n } \\
\notag &=&\frac{1}{2} + \frac{1}{12} (A+1)^2 + \cdots \ \in \Q[[A+1]]
\end{eqnarray}
\begin{eqnarray*}
\notag (-A+A^{-1})\log (-A)  &=& -  \Big(1-(A+1) - \sum_{n\geq 0} (A+1)^n\Big) \cdot 
 \sum_{n\geq 1} \frac{(A+1)^n}{n} \\
&=& 2(A+1)^2 + 2(A+1)^3 + \cdots  \  \in \Q[[A+1]]
\end{eqnarray*}

\begin{thm}[{\cite[Theorem 4.1]{TsujiCSAI}}]
\label{thm_main_Dehn_skein}
The action of the Dehn twist along $C$ on the completed skein module $\cskein (\Sigma, \bullet, *)$ coincides with the exponential of $\sigma ( L_{\skein} (C))$:
\begin{equation*}
t_C =\exp \big( \sigma (L_{\skein} (C))\big) \colon \cskein (\Sigma,\bullet,*) \longrightarrow \cskein(\Sigma,\bullet,*).
\end{equation*}
\end{thm}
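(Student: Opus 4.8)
The plan is to reduce the identity to a local model inside a thickened annular neighbourhood of $C$, where both sides can be computed after diagonalising the action of the core curve. First I would use that the Dehn twist $t_C$ is supported in a regular neighbourhood $N(C)\cong S^1\times[-1,+1]$ of the simple closed curve $C$, together with the functoriality of skein modules under embeddings (Remark~\ref{rem:skein_functorial}): given a framed tangle $y$ representing a class in $\cskein(\Sigma,\bullet,*)$, I isotope its string into general position with respect to $C$, so that it meets $N(C)\times[-1,+1]$ in finitely many parallel arcs while the part of $y$ outside $N(C)$ is fixed by $t_C$. Because $L_{\skein}(C)$ from \eqref{eq:LKC} is a power series in the single knot $C\subset N(C)\times[-1,+1]$, the operator
\[
\sigma\big(L_{\skein}(C)\big)(y)=\frac{L_{\skein}(C)\,y-y\,L_{\skein}(C)}{\epsilon_{\skein(\Sigma)}}
\]
from \eqref{eq:sigma_ter} also only involves the portion of $y$ crossing $N(C)$. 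Thus it suffices to compare $t_C$ and $\exp(\sigma(L_{\skein}(C)))$ on arcs crossing the annulus.

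The key point is that $\arccosh(-C/2)$ is the skein-theoretic analogue of the logarithm $\log C$ occurring in \eqref{eq:tCexp}. To exploit it, note that the two-sided stacking operators $L_C$ and $R_C$ (placing the loop $C$ respectively above and below the arc) commute on the module of arcs, so that $\sigma(C)=\tfrac1{\epsilon_{\skein(\Sigma)}}(L_C-R_C)$ and, more generally, $\sigma(g(C))=\tfrac1{\epsilon_{\skein(\Sigma)}}(g(L_C)-g(R_C))$ for any power series $g$. I would simultaneously diagonalise $L_C$ and $R_C$ using the Temperley--Lieb fusion of the arc with the parallel copies of $C$ arising from the powers $C^k$; on a common eigenvector indexed by a colour $n$ they act by scalars $a_n$ and $b_n$ for which $\arccosh(-a_n/2)$ and $\arccosh(-b_n/2)$ are explicit affine-linear functions of $n$ with coefficients in $\log(-A)\cdot\Q$, determined by the fusion channels.

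The two operators are now compared on each eigen-component. The Dehn twist, being a full positive twist inside the annulus, also acts diagonally in this fusion basis, multiplying the colour-$n$ component by the framing (ribbon) eigenvalue $(-1)^nA^{n^2+2n}$. On the other hand, applying $\sigma(g(C))=\tfrac1{\epsilon_{\skein(\Sigma)}}(g(L_C)-g(R_C))$ to the quadratic part of \eqref{eq:LKC} (its additive constant, a multiple of the unit, drops out of the commutator), the operator $\sigma(L_{\skein}(C))$ acts on the colour-$n$ eigenvector by
\[
\frac{\big(\arccosh(-a_n/2)\big)^2-\big(\arccosh(-b_n/2)\big)^2}{4\log(-A)}.
\]
The identity to be verified is that this difference of squares equals $(n^2+2n)\log(-A)$; exponentiating then gives $\exp\big((n^2+2n)\log(-A)\big)=(-1)^nA^{n^2+2n}$, the sign disappearing because $n^2+2n\equiv n\pmod 2$, which matches the twist. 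Agreement on every eigen-component then yields $t_C=\exp(\sigma(L_{\skein}(C)))$ on all of $\cskein(\Sigma,\bullet,*)$ by linearity and completeness with respect to the filtration.

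The main obstacle is precisely this eigenvalue identity, to be established uniformly in $n$: it requires pinning down the fusion scalars $a_n,b_n$ (equivalently the exact multiples of $\log(-A)$ given by $\arccosh(-a_n/2)$ and $\arccosh(-b_n/2)$) and then checking that their difference of squares is exactly $(n^2+2n)(\log(-A))^2$, and not merely a scalar agreeing to leading order --- this is where the normalising constant $\tfrac{-A+A^{-1}}{4\log(-A)}$ in \eqref{eq:LKC} is forced. Throughout, one must keep every expression well-defined over the ground ring $\Q[[A+1]]$: since neither $\epsilon_{\skein(\Sigma)}=-A+A^{-1}$ nor $\log(-A)$ is invertible there, the divisibilities that place $\sigma(L_{\skein}(C))$ and all intermediate series in the completed skein module must be tracked (as in \cite[\S 3.2]{TsujiCSAI}). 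Finally, convergence of the exponential does not follow from the degree-$(-2)$ property (property (ii) of the filtrations) alone; as in the group-ring case, it follows by the mechanism of \cite[Lemma~4.1]{MT13} from the vanishing self-intersection of $C$, and the leading term $(\arccosh(-C/2))^2$ lies in $F^2\skein(\Sigma)$ because $C+2$ is a generator of $F^2\skein(\Sigma)$, as required for the local analysis.
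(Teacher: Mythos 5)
First, note that this survey does not reprove Theorem \ref{thm_main_Dehn_skein}: it is quoted from \cite[Theorem 4.1]{TsujiCSAI}, so your proposal has to be measured against the proof there. Your overall skeleton --- localize to the thickened annulus $N(C)\times[-1,+1]$ using Remark \ref{rem:skein_functorial}, write $\sigma(g(C))=\frac{1}{\epsilon_{\skein(\Sigma)}}\bigl(g(L_C)-g(R_C)\bigr)$ using the commuting left/right stacking operators, observe that the additive constant in \eqref{eq:LKC} is central and drops out, and reduce to a power-series identity identifying $\arccosh(-C/2)$ with a logarithm --- is exactly the right one, and your closing remarks about divisibility by $-A+A^{-1}$ and by $\log(-A)$ over $\Q[[A+1]]$ and about convergence of the exponential are the correct points to worry about.

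The genuine gap is the diagonalization step. The Jones--Wenzl colour decomposition does \emph{not} simultaneously diagonalize $L_C$ and $R_C$: multiplying by the core curve $C$ fuses a colour-$1$ object onto the strands, and since $1\otimes k\cong(k+1)\oplus(k-1)$ in the Temperley--Lieb category, $L_C$ and $R_C$ are tridiagonal, not diagonal, in the colour basis. (The operator that \emph{is} diagonal there is encirclement by a meridian of the strands, but $C$ is the core of the annulus, not a meridian.) Consequently there are no scalars $a_n,b_n$ of the kind you posit, and the eigenvalue identity you aim for, with the ribbon eigenvalue $(-1)^nA^{n^2+2n}$ on the right, does not close up. The correct local model uses the winding-number basis $\{z_k\}_{k\in\Z}$ of the skein module of the annulus with one marked point on each boundary component, on which $C\,z_k=A\,z_{k+1}+A^{-1}z_{k-1}$ and $z_k\,C=A^{-1}z_{k+1}+A\,z_{k-1}$. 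Writing $S$ for the shift $z_k\mapsto z_{k+1}$ (which is $t_C$ in the surface framing), one has $-L_C/2=\tfrac12\bigl((-A)S+((-A)S)^{-1}\bigr)$, hence $\arccosh(-L_C/2)=\log(-A)+\log S$ and $\arccosh(-R_C/2)=-\log(-A)+\log S$; the difference of squares is $4\log(-A)\log S$, and dividing by $4\log(-A)$ and exponentiating gives $S=t_C$ with no colour bookkeeping at all. Here convergence comes from $(S-1)^2=-(C+2)S$ modulo $(A+1)$, which places $S-1$ in the first filtration step. Finally, your reduction to the annulus is stated only for a single transversal arc; when the string meets $N(C)$ in several parallel arcs the inserted copies of $C$ at distinct intersection points interact under iterated applications of $\sigma$, and controlling this multi-strand bookkeeping is a substantial part of the argument in \cite{TsujiCSAI} (as it is in \cite{KK14} for the group-ring case) that your proposal does not address.
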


\begin{rem} \label{rem:second_term}
Of course, the power series $(-A+A^{-1})\log (-A)$ is in the annihilator of the Lie action $\sigma$, and Theorem \ref{thm_main_Dehn_skein} still holds if we omit this term in \eqref{eq:LKC}.
However, it plays some role in a relationship between $\skein (\Sigma)$ and the Torelli group of $\Sigma$.
See \cite[Remark 6.5]{TsujiCSApure} and Remark \ref{rem:BSCC} below.
\end{rem}

\vskip 1em

\noindent
\textbf{$\star$ The case of the HOMFLY-PT skein module.}\\[0.2cm] 
Let $\pi^+_{\bullet,*}:= \pi^+(\Sigma,\bullet,*)$
be the set of  regular homotopy classes of properly immersed paths from $\bullet$ to $*$.
Let $\nu$ be the simple path from $\bullet$ to $*$ 
that traverses the oriented boundary of~$\Sigma$: 
$$
{\unitlength 0.1in%
\begin{picture}(10.0000,5.0000)(3.2000,-6.0000)%
\put(3.2000,-3.2000){\makebox(0,0)[lb]{$\Sigma$}}%
%
\special{pn 13}%
\special{pa 520 560}%
\special{pa 1320 560}%
\special{fp}%
%
\special{pn 4}%
\special{sh 1}%
\special{ar 680 560 16 16 0 6.2831853}%
\special{sh 1}%
\special{ar 1160 560 16 16 0 6.2831853}%
\special{sh 1}%
\special{ar 1160 560 16 16 0 6.2831853}%
\put(6.6600,-6.9200){\makebox(0,0)[lb]{$\bullet$}}%
\put(11.4600,-6.9200){\makebox(0,0)[lb]{$*$}}%
\put(8.8000,-7.0000){\makebox(0,0)[lb]{$\nu$}}%
%
\special{pn 8}%
\special{pa 960 560}%
\special{pa 920 520}%
\special{fp}%
\special{pa 960 560}%
\special{pa 920 600}%
\special{fp}%
%
\special{pn 0}%
\special{sh 0.300}%
\special{pa 520 540}%
\special{pa 900 540}%
\special{pa 900 100}%
\special{pa 520 100}%
\special{pa 520 540}%
\special{ip}%
\special{pn 8}%
\special{pa 520 540}%
\special{pa 900 540}%
\special{pa 900 100}%
\special{pa 520 100}%
\special{pa 520 540}%
\special{ip}%
%
\special{pn 0}%
\special{sh 0.300}%
\special{pa 980 540}%
\special{pa 1320 540}%
\special{pa 1320 100}%
\special{pa 980 100}%
\special{pa 980 540}%
\special{ip}%
\special{pn 8}%
\special{pa 980 540}%
\special{pa 1320 540}%
\special{pa 1320 100}%
\special{pa 980 100}%
\special{pa 980 540}%
\special{ip}%
%
\special{pn 0}%
\special{sh 0.300}%
\special{pa 1000 100}%
\special{pa 880 100}%
\special{pa 880 500}%
\special{pa 1000 500}%
\special{pa 1000 100}%
\special{ip}%
\special{pn 8}%
\special{pa 1000 100}%
\special{pa 880 100}%
\special{pa 880 500}%
\special{pa 1000 500}%
\special{pa 1000 100}%
\special{ip}%
\end{picture}}%
$$
\vspace{0.cm}

\noindent We define a  group  law on  $\pi^+_{\bullet,*}$  in the following way.
Given two properly immersed paths $r_1$ and $r_2$ from $\bullet$ to $*$,
consider the concatenation $r_1\,\nu^{-1}\,r_2$,
smooth its corners and insert a small counterclockwise curl;
then the resulting path represents the multiplication  of $r_1$ and $r_2$:
$$
{\unitlength 0.1in%
\begin{picture}(32.6000,5.3200)(1.2000,-8.2000)%
\put(4.6000,-5.7000){\makebox(0,0)[lb]{$r_1$}}%
%
\special{pn 13}%
\special{pa 120 800}%
\special{pa 920 800}%
\special{fp}%
%
\special{pn 4}%
\special{sh 1}%
\special{ar 280 800 16 16 0 6.2831853}%
\special{sh 1}%
\special{ar 760 800 16 16 0 6.2831853}%
\special{sh 1}%
\special{ar 760 800 16 16 0 6.2831853}%
\put(2.6600,-9.7200){\makebox(0,0)[lb]{$\bullet$}}%
\put(7.4600,-9.7200){\makebox(0,0)[lb]{$*$}}%
%
\special{pn 8}%
\special{pa 760 800}%
\special{pa 760 640}%
\special{fp}%
\special{pa 280 800}%
\special{pa 280 640}%
\special{fp}%
%
\special{pn 8}%
\special{pn 8}%
\special{pa 280 640}%
\special{pa 280 632}%
\special{fp}%
\special{pa 284 594}%
\special{pa 287 586}%
\special{fp}%
\special{pa 298 549}%
\special{pa 301 542}%
\special{fp}%
\special{pa 320 508}%
\special{pa 324 501}%
\special{fp}%
\special{pa 349 472}%
\special{pa 355 466}%
\special{fp}%
\special{pa 385 442}%
\special{pa 391 437}%
\special{fp}%
\special{pa 425 419}%
\special{pa 433 416}%
\special{fp}%
\special{pa 470 405}%
\special{pa 478 404}%
\special{fp}%
\special{pa 516 400}%
\special{pa 524 400}%
\special{fp}%
\special{pa 562 404}%
\special{pa 571 405}%
\special{fp}%
\special{pa 607 416}%
\special{pa 615 420}%
\special{fp}%
\special{pa 649 438}%
\special{pa 656 442}%
\special{fp}%
\special{pa 685 466}%
\special{pa 691 472}%
\special{fp}%
\special{pa 716 501}%
\special{pa 720 508}%
\special{fp}%
\special{pa 739 542}%
\special{pa 742 549}%
\special{fp}%
\special{pa 753 586}%
\special{pa 756 594}%
\special{fp}%
\special{pa 760 632}%
\special{pa 760 640}%
\special{fp}%
%
\special{pn 13}%
\special{pa 1000 800}%
\special{pa 1800 800}%
\special{fp}%
%
\special{pn 4}%
\special{sh 1}%
\special{ar 1160 800 16 16 0 6.2831853}%
\special{sh 1}%
\special{ar 1640 800 16 16 0 6.2831853}%
\special{sh 1}%
\special{ar 1640 800 16 16 0 6.2831853}%
\put(11.0600,-9.7200){\makebox(0,0)[lb]{$\bullet$}}%
\put(16.2600,-9.7200){\makebox(0,0)[lb]{$*$}}%
%
\special{pn 8}%
\special{pa 1640 800}%
\special{pa 1640 640}%
\special{fp}%
\special{pa 1160 800}%
\special{pa 1160 640}%
\special{fp}%
%
\special{pn 8}%
\special{pn 8}%
\special{pa 1160 640}%
\special{pa 1160 632}%
\special{fp}%
\special{pa 1164 596}%
\special{pa 1165 589}%
\special{fp}%
\special{pa 1176 554}%
\special{pa 1179 547}%
\special{fp}%
\special{pa 1195 515}%
\special{pa 1199 508}%
\special{fp}%
\special{pa 1221 480}%
\special{pa 1227 474}%
\special{fp}%
\special{pa 1254 450}%
\special{pa 1260 445}%
\special{fp}%
\special{pa 1291 426}%
\special{pa 1298 423}%
\special{fp}%
\special{pa 1331 410}%
\special{pa 1339 408}%
\special{fp}%
\special{pa 1374 401}%
\special{pa 1382 401}%
\special{fp}%
\special{pa 1418 401}%
\special{pa 1426 401}%
\special{fp}%
\special{pa 1461 408}%
\special{pa 1469 410}%
\special{fp}%
\special{pa 1502 423}%
\special{pa 1509 426}%
\special{fp}%
\special{pa 1540 445}%
\special{pa 1546 450}%
\special{fp}%
\special{pa 1573 474}%
\special{pa 1578 479}%
\special{fp}%
\special{pa 1600 508}%
\special{pa 1605 515}%
\special{fp}%
\special{pa 1621 547}%
\special{pa 1624 554}%
\special{fp}%
\special{pa 1635 589}%
\special{pa 1636 596}%
\special{fp}%
\special{pa 1640 632}%
\special{pa 1640 640}%
\special{fp}%
\put(13.5000,-5.7000){\makebox(0,0)[lb]{$r_2$}}%
%
\special{pn 13}%
\special{pa 2580 800}%
\special{pa 3380 800}%
\special{fp}%
%
\special{pn 4}%
\special{sh 1}%
\special{ar 2740 800 16 16 0 6.2831853}%
\special{sh 1}%
\special{ar 3220 800 16 16 0 6.2831853}%
\special{sh 1}%
\special{ar 3220 800 16 16 0 6.2831853}%
%
\special{pn 8}%
\special{pa 3220 800}%
\special{pa 3220 640}%
\special{fp}%
\special{pa 2740 800}%
\special{pa 2740 640}%
\special{fp}%
%
\special{pn 8}%
\special{ar 3100 720 40 40 4.7123890 3.1415927}%
%
\special{pn 8}%
\special{pn 8}%
\special{pa 2660 640}%
\special{pa 2660 632}%
\special{fp}%
\special{pa 2663 597}%
\special{pa 2665 590}%
\special{fp}%
\special{pa 2673 557}%
\special{pa 2675 550}%
\special{fp}%
\special{pa 2688 518}%
\special{pa 2692 511}%
\special{fp}%
\special{pa 2709 482}%
\special{pa 2713 477}%
\special{fp}%
\special{pa 2733 452}%
\special{pa 2738 446}%
\special{fp}%
\special{pa 2763 423}%
\special{pa 2769 418}%
\special{fp}%
\special{pa 2798 399}%
\special{pa 2804 395}%
\special{fp}%
\special{pa 2834 381}%
\special{pa 2841 378}%
\special{fp}%
\special{pa 2875 368}%
\special{pa 2882 366}%
\special{fp}%
\special{pa 2917 361}%
\special{pa 2925 360}%
\special{fp}%
\special{pa 2961 361}%
\special{pa 2969 361}%
\special{fp}%
\special{pa 3003 367}%
\special{pa 3011 369}%
\special{fp}%
\special{pa 3044 380}%
\special{pa 3050 383}%
\special{fp}%
\special{pa 3079 397}%
\special{pa 3086 401}%
\special{fp}%
\special{pa 3113 420}%
\special{pa 3119 425}%
\special{fp}%
\special{pa 3143 447}%
\special{pa 3147 452}%
\special{fp}%
\special{pa 3168 477}%
\special{pa 3172 483}%
\special{fp}%
\special{pa 3189 512}%
\special{pa 3192 518}%
\special{fp}%
\special{pa 3205 550}%
\special{pa 3207 557}%
\special{fp}%
\special{pa 3215 589}%
\special{pa 3217 597}%
\special{fp}%
\special{pa 3220 632}%
\special{pa 3220 640}%
\special{fp}%
%
\special{pn 8}%
\special{pn 8}%
\special{pa 2740 640}%
\special{pa 2740 632}%
\special{fp}%
\special{pa 2746 596}%
\special{pa 2748 589}%
\special{fp}%
\special{pa 2763 558}%
\special{pa 2767 551}%
\special{fp}%
\special{pa 2787 526}%
\special{pa 2793 521}%
\special{fp}%
\special{pa 2818 503}%
\special{pa 2824 499}%
\special{fp}%
\special{pa 2853 487}%
\special{pa 2860 485}%
\special{fp}%
\special{pa 2895 480}%
\special{pa 2904 480}%
\special{fp}%
\special{pa 2939 485}%
\special{pa 2946 487}%
\special{fp}%
\special{pa 2976 499}%
\special{pa 2982 502}%
\special{fp}%
\special{pa 3007 521}%
\special{pa 3012 526}%
\special{fp}%
\special{pa 3033 551}%
\special{pa 3037 557}%
\special{fp}%
\special{pa 3052 589}%
\special{pa 3054 596}%
\special{fp}%
\special{pa 3060 632}%
\special{pa 3060 640}%
\special{fp}%
%
\special{pn 8}%
\special{pa 520 400}%
\special{pa 480 360}%
\special{fp}%
\special{pa 520 400}%
\special{pa 480 440}%
\special{fp}%
%
\special{pn 8}%
\special{pa 1400 400}%
\special{pa 1360 360}%
\special{fp}%
\special{pa 1400 400}%
\special{pa 1360 440}%
\special{fp}%
%
\special{pn 8}%
\special{pa 2940 360}%
\special{pa 2900 320}%
\special{fp}%
\special{pa 2940 360}%
\special{pa 2900 400}%
\special{fp}%
%
\special{pn 8}%
\special{pa 2900 480}%
\special{pa 2860 440}%
\special{fp}%
\special{pa 2900 480}%
\special{pa 2860 520}%
\special{fp}%
\put(26.8600,-9.7200){\makebox(0,0)[lb]{$\bullet$}}%
\put(32.0600,-9.7200){\makebox(0,0)[lb]{$*$}}%
\put(28.0500,-6.4000){\makebox(0,0)[lb]{$r_1$}}%
\put(31.5000,-4.4000){\makebox(0,0)[lb]{$r_2$}}%
\put(20.7000,-7.2000){\makebox(0,0)[lb]{$r_1 r_2 :=$}}%
%
\special{pn 8}%
\special{pa 3100 680}%
\special{pa 2660 680}%
\special{fp}%
\special{pa 2660 640}%
\special{pa 2660 680}%
\special{fp}%
\special{pa 3060 640}%
\special{pa 3060 720}%
\special{fp}%
\end{picture}}%
$$
\vspace{0.cm}

\noindent
By abusing notation, let $\nu \in \pi^+_{\bullet,*}$ be represented by a simple path 
regularly homotopic to $\nu$: 
\[
{\unitlength 0.1in%
\begin{picture}(8.0000,2.6000)(1.2000,-8.2000)%
%
\special{pn 13}%
\special{pa 120 800}%
\special{pa 920 800}%
\special{fp}%
%
\special{pn 4}%
\special{sh 1}%
\special{ar 280 800 16 16 0 6.2831853}%
\special{sh 1}%
\special{ar 760 800 16 16 0 6.2831853}%
\special{sh 1}%
\special{ar 760 800 16 16 0 6.2831853}%
\put(2.6600,-9.7200){\makebox(0,0)[lb]{$\bullet$}}%
\put(7.4600,-9.7200){\makebox(0,0)[lb]{$*$}}%
%
\special{pn 8}%
\special{pa 520 600}%
\special{pa 480 560}%
\special{fp}%
\special{pa 520 600}%
\special{pa 480 640}%
\special{fp}%
%
\special{pn 8}%
\special{ar 520 760 240 160 3.1415927 6.2831853}%
%
\special{pn 8}%
\special{pa 280 800}%
\special{pa 280 760}%
\special{fp}%
%
\special{pn 8}%
\special{pa 760 760}%
\special{pa 760 800}%
\special{fp}%
\end{picture}}%
\]
\vspace{0.cm}

\noindent Note that $\nu$ is a unital element for the  group  law  on $\pi^+_{\bullet,*}$. 
  
 Let $\mathcal{P}(\Sigma,\bullet,*)$ be the free $\Q [\rho][[h]]$-module  generated by the set  $\pi^+_{\bullet,*}$ 
 modulo the relation that transforms a clockwise little curl to $\exp(\rho h)$. 
 For a properly immersed path $r\colon [0,1] \to \Sigma$ from $\bullet$ to $*$,
a framed oriented tangle $\varphi (r)$ is defined by
$$
\varphi (r)\colon [0,1] \times [-1,+1] \longrightarrow \Sigma \times [-1,+1], \quad 
(t,s) \longmapsto \big(r(t), {1/2-t+\epsilon s}\big)
$$
where $\epsilon>0$ is a number small enough.
This $\varphi$ induces a $\Q [\rho][[h]]$-linear map
$\varphi\colon \mathcal{P} (\Sigma,\bullet,*) \to \tskein (\Sigma, \bullet,*)$.

We also define a multiplication law on $\tskein (\Sigma,\bullet,*)$ in the following way. 
Given two tangles $T_1$ and $T_2$, stack $T_1$ over $T_2$ and, next, connect the final 
point of the string of $T_1$  with the  initial
point of the string of $T_2$ by a segment that monotonically goes down (between $T_1$ and $T_2$)
and projects onto $\nu^{-1}$ and, finally, insert a small negative kink: 
$$
{\unitlength 0.1in%
\begin{picture}(32.6000,5.3200)(1.2000,-8.2000)%
\put(4.3500,-5.9500){\makebox(0,0)[lb]{$T_1$}}%
%
\special{pn 13}%
\special{pa 120 800}%
\special{pa 920 800}%
\special{fp}%
%
\special{pn 4}%
\special{sh 1}%
\special{ar 280 800 16 16 0 6.2831853}%
\special{sh 1}%
\special{ar 760 800 16 16 0 6.2831853}%
\special{sh 1}%
\special{ar 760 800 16 16 0 6.2831853}%
\put(2.6600,-9.7200){\makebox(0,0)[lb]{$\bullet$}}%
\put(7.4600,-9.7200){\makebox(0,0)[lb]{$*$}}%
%
\special{pn 8}%
\special{pa 760 800}%
\special{pa 760 640}%
\special{fp}%
\special{pa 280 800}%
\special{pa 280 640}%
\special{fp}%
%
\special{pn 8}%
\special{pn 8}%
\special{pa 280 640}%
\special{pa 280 632}%
\special{fp}%
\special{pa 284 594}%
\special{pa 287 586}%
\special{fp}%
\special{pa 298 549}%
\special{pa 301 542}%
\special{fp}%
\special{pa 320 508}%
\special{pa 324 501}%
\special{fp}%
\special{pa 349 472}%
\special{pa 355 466}%
\special{fp}%
\special{pa 385 442}%
\special{pa 391 437}%
\special{fp}%
\special{pa 425 419}%
\special{pa 433 416}%
\special{fp}%
\special{pa 470 405}%
\special{pa 478 404}%
\special{fp}%
\special{pa 516 400}%
\special{pa 524 400}%
\special{fp}%
\special{pa 562 404}%
\special{pa 571 405}%
\special{fp}%
\special{pa 607 416}%
\special{pa 615 420}%
\special{fp}%
\special{pa 649 438}%
\special{pa 656 442}%
\special{fp}%
\special{pa 685 466}%
\special{pa 691 472}%
\special{fp}%
\special{pa 716 501}%
\special{pa 720 508}%
\special{fp}%
\special{pa 739 542}%
\special{pa 742 549}%
\special{fp}%
\special{pa 753 586}%
\special{pa 756 594}%
\special{fp}%
\special{pa 760 632}%
\special{pa 760 640}%
\special{fp}%
%
\special{pn 13}%
\special{pa 1000 800}%
\special{pa 1800 800}%
\special{fp}%
%
\special{pn 4}%
\special{sh 1}%
\special{ar 1160 800 16 16 0 6.2831853}%
\special{sh 1}%
\special{ar 1640 800 16 16 0 6.2831853}%
\special{sh 1}%
\special{ar 1640 800 16 16 0 6.2831853}%
\put(11.0600,-9.7200){\makebox(0,0)[lb]{$\bullet$}}%
\put(16.2600,-9.7200){\makebox(0,0)[lb]{$*$}}%
%
\special{pn 8}%
\special{pa 1640 800}%
\special{pa 1640 640}%
\special{fp}%
\special{pa 1160 800}%
\special{pa 1160 640}%
\special{fp}%
%
\special{pn 8}%
\special{pn 8}%
\special{pa 1160 640}%
\special{pa 1160 632}%
\special{fp}%
\special{pa 1164 596}%
\special{pa 1165 589}%
\special{fp}%
\special{pa 1176 554}%
\special{pa 1179 547}%
\special{fp}%
\special{pa 1195 515}%
\special{pa 1199 508}%
\special{fp}%
\special{pa 1221 480}%
\special{pa 1227 474}%
\special{fp}%
\special{pa 1254 450}%
\special{pa 1260 445}%
\special{fp}%
\special{pa 1291 426}%
\special{pa 1298 423}%
\special{fp}%
\special{pa 1331 410}%
\special{pa 1339 408}%
\special{fp}%
\special{pa 1374 401}%
\special{pa 1382 401}%
\special{fp}%
\special{pa 1418 401}%
\special{pa 1426 401}%
\special{fp}%
\special{pa 1461 408}%
\special{pa 1469 410}%
\special{fp}%
\special{pa 1502 423}%
\special{pa 1509 426}%
\special{fp}%
\special{pa 1540 445}%
\special{pa 1546 450}%
\special{fp}%
\special{pa 1573 474}%
\special{pa 1578 479}%
\special{fp}%
\special{pa 1600 508}%
\special{pa 1605 515}%
\special{fp}%
\special{pa 1621 547}%
\special{pa 1624 554}%
\special{fp}%
\special{pa 1635 589}%
\special{pa 1636 596}%
\special{fp}%
\special{pa 1640 632}%
\special{pa 1640 640}%
\special{fp}%
\put(13.2500,-5.9500){\makebox(0,0)[lb]{$T_2$}}%
%
\special{pn 13}%
\special{pa 2580 800}%
\special{pa 3380 800}%
\special{fp}%
%
\special{pn 4}%
\special{sh 1}%
\special{ar 2740 800 16 16 0 6.2831853}%
\special{sh 1}%
\special{ar 3220 800 16 16 0 6.2831853}%
\special{sh 1}%
\special{ar 3220 800 16 16 0 6.2831853}%
%
\special{pn 8}%
\special{pa 3220 800}%
\special{pa 3220 640}%
\special{fp}%
\special{pa 2740 800}%
\special{pa 2740 640}%
\special{fp}%
%
\special{pn 8}%
\special{ar 3100 720 40 40 4.7123890 3.1415927}%
%
\special{pn 8}%
\special{pn 8}%
\special{pa 2660 640}%
\special{pa 2660 632}%
\special{fp}%
\special{pa 2663 597}%
\special{pa 2665 590}%
\special{fp}%
\special{pa 2673 557}%
\special{pa 2675 550}%
\special{fp}%
\special{pa 2688 518}%
\special{pa 2692 511}%
\special{fp}%
\special{pa 2709 482}%
\special{pa 2713 477}%
\special{fp}%
\special{pa 2733 452}%
\special{pa 2738 446}%
\special{fp}%
\special{pa 2763 423}%
\special{pa 2769 418}%
\special{fp}%
\special{pa 2798 399}%
\special{pa 2804 395}%
\special{fp}%
\special{pa 2834 381}%
\special{pa 2841 378}%
\special{fp}%
\special{pa 2875 368}%
\special{pa 2882 366}%
\special{fp}%
\special{pa 2917 361}%
\special{pa 2925 360}%
\special{fp}%
\special{pa 2961 361}%
\special{pa 2969 361}%
\special{fp}%
\special{pa 3003 367}%
\special{pa 3011 369}%
\special{fp}%
\special{pa 3044 380}%
\special{pa 3050 383}%
\special{fp}%
\special{pa 3079 397}%
\special{pa 3086 401}%
\special{fp}%
\special{pa 3113 420}%
\special{pa 3119 425}%
\special{fp}%
\special{pa 3143 447}%
\special{pa 3147 452}%
\special{fp}%
\special{pa 3168 477}%
\special{pa 3172 483}%
\special{fp}%
\special{pa 3189 512}%
\special{pa 3192 518}%
\special{fp}%
\special{pa 3205 550}%
\special{pa 3207 557}%
\special{fp}%
\special{pa 3215 589}%
\special{pa 3217 597}%
\special{fp}%
\special{pa 3220 632}%
\special{pa 3220 640}%
\special{fp}%
%
\special{pn 8}%
\special{pn 8}%
\special{pa 2740 640}%
\special{pa 2740 632}%
\special{fp}%
\special{pa 2746 596}%
\special{pa 2748 589}%
\special{fp}%
\special{pa 2763 558}%
\special{pa 2767 551}%
\special{fp}%
\special{pa 2787 526}%
\special{pa 2793 521}%
\special{fp}%
\special{pa 2818 503}%
\special{pa 2824 499}%
\special{fp}%
\special{pa 2853 487}%
\special{pa 2860 485}%
\special{fp}%
\special{pa 2895 480}%
\special{pa 2904 480}%
\special{fp}%
\special{pa 2939 485}%
\special{pa 2946 487}%
\special{fp}%
\special{pa 2976 499}%
\special{pa 2982 502}%
\special{fp}%
\special{pa 3007 521}%
\special{pa 3012 526}%
\special{fp}%
\special{pa 3033 551}%
\special{pa 3037 557}%
\special{fp}%
\special{pa 3052 589}%
\special{pa 3054 596}%
\special{fp}%
\special{pa 3060 632}%
\special{pa 3060 640}%
\special{fp}%
%
\special{pn 8}%
\special{pa 520 400}%
\special{pa 480 360}%
\special{fp}%
\special{pa 520 400}%
\special{pa 480 440}%
\special{fp}%
%
\special{pn 8}%
\special{pa 1400 400}%
\special{pa 1360 360}%
\special{fp}%
\special{pa 1400 400}%
\special{pa 1360 440}%
\special{fp}%
%
\special{pn 8}%
\special{pa 2940 360}%
\special{pa 2900 320}%
\special{fp}%
\special{pa 2940 360}%
\special{pa 2900 400}%
\special{fp}%
%
\special{pn 8}%
\special{pa 2900 480}%
\special{pa 2860 440}%
\special{fp}%
\special{pa 2900 480}%
\special{pa 2860 520}%
\special{fp}%
\put(26.8600,-9.7200){\makebox(0,0)[lb]{$\bullet$}}%
\put(32.0600,-9.7200){\makebox(0,0)[lb]{$*$}}%
\put(28.0500,-6.6500){\makebox(0,0)[lb]{$T_1$}}%
\put(31.2500,-4.4000){\makebox(0,0)[lb]{$T_2$}}%
\put(20.2000,-7.2000){\makebox(0,0)[lb]{$T_1 T_2 :=$}}%
%
\special{pn 8}%
\special{pa 3060 640}%
\special{pa 3060 720}%
\special{fp}%
\special{pa 3100 680}%
\special{pa 3080 680}%
\special{fp}%
\special{pa 3040 680}%
\special{pa 2760 680}%
\special{fp}%
\special{pa 2720 680}%
\special{pa 2660 680}%
\special{fp}%
\special{pa 2660 680}%
\special{pa 2660 640}%
\special{fp}%
\end{picture}}%
$$
\vspace{0.cm}

\noindent
Then $\tskein(\Sigma,\bullet,*)$ becomes an associative algebra 
and the Lie action $\sigma$ of $\tskein(\Sigma)$ on $\tskein(\Sigma,\bullet,*)$ is an action by derivations.
Furthermore, the map $\varphi$ is an algebra homomorphism.

Finally, there is a closure operation $\vert- \vert\colon \tskein (\Sigma, \bullet, *) \to \tskein  (\Sigma )$  
which consists in connecting the two endpoints of the string of a tangle using an arc that projects onto $\nu^{-1}$: 
$$
{\unitlength 0.1in%
\begin{picture}(19.8000,4.4000)(1.2000,-5.6000)%
%
\special{pn 13}%
\special{pa 120 560}%
\special{pa 920 560}%
\special{fp}%
%
\special{pn 4}%
\special{sh 1}%
\special{ar 280 560 16 16 0 6.2831853}%
\special{sh 1}%
\special{ar 760 560 16 16 0 6.2831853}%
\special{sh 1}%
\special{ar 760 560 16 16 0 6.2831853}%
\put(2.6600,-6.9200){\makebox(0,0)[lb]{$\bullet$}}%
\put(7.4600,-6.9200){\makebox(0,0)[lb]{$*$}}%
%
\special{pn 13}%
\special{pa 1300 560}%
\special{pa 2100 560}%
\special{fp}%
%
\special{pn 4}%
\special{sh 1}%
\special{ar 1460 560 16 16 0 6.2831853}%
\special{sh 1}%
\special{ar 1940 560 16 16 0 6.2831853}%
\special{sh 1}%
\special{ar 1940 560 16 16 0 6.2831853}%
\put(14.4600,-6.9200){\makebox(0,0)[lb]{$\bullet$}}%
\put(19.2600,-6.9200){\makebox(0,0)[lb]{$*$}}%
%
\special{pn 8}%
\special{pa 280 560}%
\special{pa 280 400}%
\special{fp}%
\special{pa 760 560}%
\special{pa 760 400}%
\special{fp}%
%
\special{pn 8}%
\special{pn 8}%
\special{pa 280 400}%
\special{pa 280 392}%
\special{fp}%
\special{pa 284 354}%
\special{pa 286 346}%
\special{fp}%
\special{pa 298 309}%
\special{pa 301 301}%
\special{fp}%
\special{pa 320 268}%
\special{pa 324 261}%
\special{fp}%
\special{pa 349 232}%
\special{pa 355 226}%
\special{fp}%
\special{pa 384 202}%
\special{pa 391 197}%
\special{fp}%
\special{pa 425 180}%
\special{pa 433 176}%
\special{fp}%
\special{pa 470 165}%
\special{pa 478 164}%
\special{fp}%
\special{pa 516 160}%
\special{pa 524 160}%
\special{fp}%
\special{pa 562 164}%
\special{pa 570 166}%
\special{fp}%
\special{pa 607 176}%
\special{pa 615 179}%
\special{fp}%
\special{pa 649 197}%
\special{pa 656 202}%
\special{fp}%
\special{pa 685 226}%
\special{pa 691 232}%
\special{fp}%
\special{pa 716 261}%
\special{pa 720 268}%
\special{fp}%
\special{pa 739 302}%
\special{pa 742 309}%
\special{fp}%
\special{pa 754 346}%
\special{pa 755 354}%
\special{fp}%
\special{pa 760 392}%
\special{pa 760 400}%
\special{fp}%
%
\special{pn 8}%
\special{pn 8}%
\special{pa 1460 400}%
\special{pa 1460 392}%
\special{fp}%
\special{pa 1464 356}%
\special{pa 1466 348}%
\special{fp}%
\special{pa 1476 314}%
\special{pa 1479 307}%
\special{fp}%
\special{pa 1495 275}%
\special{pa 1499 269}%
\special{fp}%
\special{pa 1521 240}%
\special{pa 1527 234}%
\special{fp}%
\special{pa 1554 209}%
\special{pa 1560 205}%
\special{fp}%
\special{pa 1591 186}%
\special{pa 1597 183}%
\special{fp}%
\special{pa 1630 170}%
\special{pa 1638 168}%
\special{fp}%
\special{pa 1673 162}%
\special{pa 1681 161}%
\special{fp}%
\special{pa 1718 161}%
\special{pa 1726 161}%
\special{fp}%
\special{pa 1762 168}%
\special{pa 1769 170}%
\special{fp}%
\special{pa 1802 183}%
\special{pa 1809 186}%
\special{fp}%
\special{pa 1840 205}%
\special{pa 1846 209}%
\special{fp}%
\special{pa 1873 234}%
\special{pa 1879 240}%
\special{fp}%
\special{pa 1901 269}%
\special{pa 1905 275}%
\special{fp}%
\special{pa 1921 307}%
\special{pa 1924 314}%
\special{fp}%
\special{pa 1934 348}%
\special{pa 1936 356}%
\special{fp}%
\special{pa 1940 392}%
\special{pa 1940 400}%
\special{fp}%
%
\special{pn 8}%
\special{pa 1460 400}%
\special{pa 1460 440}%
\special{fp}%
\special{pa 1940 440}%
\special{pa 1940 400}%
\special{fp}%
\special{pa 1940 400}%
\special{pa 1940 400}%
\special{fp}%
%
\special{pn 8}%
\special{ar 1700 440 240 80 6.2831853 3.1415927}%
\put(4.0000,-4.0000){\makebox(0,0)[lb]{$T$}}%
\put(15.8000,-4.0000){\makebox(0,0)[lb]{$\lvert T \rvert$}}%
%
\special{pn 8}%
\special{pa 520 160}%
\special{pa 480 120}%
\special{fp}%
\special{pa 520 160}%
\special{pa 480 200}%
\special{fp}%
%
\special{pn 8}%
\special{pa 1720 160}%
\special{pa 1680 120}%
\special{fp}%
\special{pa 1720 160}%
\special{pa 1680 200}%
\special{fp}%
\end{picture}}%
$$
\vspace{0.cm}

For any integer $n\ge 1$, let $\varphi_n \colon \mathcal{P}(\Sigma,\bullet,*)^{\otimes n} \to \tskein (\Sigma)$
 be the $\Q[\rho][[h]]$-linear map defined by   $\varphi_n (r_1 \otimes \cdots \otimes r_n) :=\zettaiti{\varphi (r_1)} \cdots  \zettaiti{\varphi (r_n)}$.
There is a filtration of the module $\mathcal{P}(\Sigma,\bullet,*)$ defined in terms of the augmentation ideal of the group algebra of $\pi^+_{\bullet,*}$, and the map $\varphi_n$ naturally descends to completions: $\varphi_n \colon \widehat{\mathcal{P}}(\Sigma,\bullet,*)^{\widehat{\otimes} n} \to \ctskein (\Sigma)$.

We define the power series $\lambda^{[n]} (X_1, \cdots,X_n) \in \Q[[X_1-1,\ldots,X_n-1]]$ in $n$ variables,
 inductively, by setting $\lambda^{[1]} (X_1) := (1/2) (\log X_1)^2$ and
\begin{align*}
& \lambda^{[n+1]} (X_1 , \cdots, X_{n+1}) := \\
& \frac{X_1\cdots X_n\, \lambda^{[n]} (X_1, \cdots, X_n)-
X_2\cdots X_{n+1}\, \lambda^{[n]}(X_2, \cdots, X_{n+1})}{X_1-X_{n+1}}
\end{align*}
for $n\ge 1$.
Now, for any $r \in \pi^+_{\bullet,*}$, define the element $L_{\tskein}(r) \in \ctskein (\Sigma)$ by 
\begin{eqnarray}
\label{eq:LAr}  \quad L_{\tskein} (r) &:=& 
 \left( \frac{h/2}{\arcsinh (h/2)} \right)^2  \cdot \\
\nonumber && \left(\sum_{n=1}^\infty \frac{(-h)^{n-1}}{n \exp(n \rho h)} \varphi_n
(\lambda^{[n]} (r_{1,n}, \cdots, r_{n,n})) -\frac{1}{3}\rho^3 h^2 \right), 
\end{eqnarray}
where 
$r_{i,n} := {\nu}^{\otimes (i-1)} \otimes \exp (\rho h) r
\otimes  {\nu}^{\otimes n-i}$ and the evaluation $\lambda^{[n]} (r_{1,n}, \cdots, r_{n,n})$ 
is understood as substitution of $X_i - 1$ with $r_{i,n} - \nu^{\otimes n}$.

Finally, we associate to any simple closed curve $C$ in $\Sigma$ the element
\begin{equation} \label{LAC}
L_{\tskein} (C) := L_{\tskein} (r_C) \in \ctskein (\Sigma)
\end{equation}
where $r_C \in \pi^+_{\bullet,*}$ is a simple path such that  $\nu^{-1}r_C$
is homotopic to $C$.

\begin{thm}[{\cite[Theorem 5.2 \& Theorem 5.1]{TsujiHOMFLY-PTskein}}]
\label{thm_main_Dehn_tskein}
For any simple closed curve $C$ in $\Sigma$, 
the skein element $L_{\tskein} (C)$ is independent of the choice of $r_C$,
and the action of the Dehn twist along $C$ on the completed skein module $\ctskein (\Sigma, \bullet, *)$ 
coincides with the exponential of $\sigma ( L_{\tskein} (C))$:
\begin{equation*}
t_C =\exp \big(  \sigma (L_{\tskein} (C))  \big) 
\colon \ctskein (\Sigma,\bullet,*) \longrightarrow \ctskein(\Sigma,\bullet,*).
\end{equation*} 
\end{thm}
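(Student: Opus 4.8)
The plan is to follow the template set by Theorem~\ref{thm:t_Cformula} and Theorem~\ref{thm_main_Dehn_skein}, carried out intrinsically in the HOMFLY-PT skein module. Both $t_C$ and $\exp(\sigma(L_{\tskein}(C)))$ are automorphisms of the completed algebra $\ctskein(\Sigma,\bullet,*)$: the Dehn twist because it is induced by a diffeomorphism supported near $C$ (so it respects the algebra structure introduced via the $\nu^{-1}$-concatenation), and the exponential because $\sigma(L_{\tskein}(C))$ is a derivation of this algebra whose exponential converges on the completion, exactly as in the two cited theorems. Since $\tskein(\Sigma,\bullet,*)$ is generated as an algebra by the tangles $\varphi(r)$ attached to paths $r\in\pi^+_{\bullet,*}$ together with closed components, and both maps are algebra automorphisms, it suffices to verify the identity on a single generator $\varphi(r)$, and ultimately on a string crossing $C$ transversally.

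First I would dispose of the independence of $L_{\tskein}(C)$ from the choice of $r_C$. Two simple paths $r_C,r_C'$ with $\nu^{-1}r_C\simeq\nu^{-1}r_C'\simeq C$ differ in $\pi^+_{\bullet,*}$ only by curls, which by the defining relation of $\mathcal{P}(\Sigma,\bullet,*)$ contribute powers of $\exp(\rho h)$; one then checks that the weights $(-h)^{n-1}/(n\exp(n\rho h))$ in \eqref{eq:LAr}, together with the prefactor $(h/2\,/\arcsinh(h/2))^2$ and the correction $-\tfrac13\rho^3 h^2$, are tuned precisely so that the contributions of these curls cancel. This is bookkeeping once the response of each $\lambda^{[n]}(r_{1,n},\dots,r_{n,n})$ to the substitution $r\mapsto\exp(\rho h)\,r$ is understood.

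For the main identity I would localise to a regular neighbourhood of $C$, an annulus $A$, using functoriality of $\tskein$ (Remark~\ref{rem:skein_functorial}) and the fact that the support of $t_C$ lies in $A$. Geometrically, $t_C$ sends the string of a tangle crossing $C$ to the string with a copy of $C$ inserted, and iterating produces the string wrapped $n$ times around the core of $A$; expanding this wrapped string in the skein module of $A$ yields a series whose $n$-th term is governed by $\varphi_n$ applied to an $n$-fold expression. I would match this, term by term, against the expansion of $\exp(\sigma(L_{\tskein}(C)))$: by \eqref{eq:sigma_ter} the $n$-th power of the derivation $\sigma(L_{\tskein}(C))$ produces nested renormalised commutators of the wrapped knots with the string, and the divided-difference recursion defining $\lambda^{[n+1]}$ from $\lambda^{[n]}$ is exactly what makes the $n$-fold iterated commutator collapse onto the $n$-th wrapping term, with $\lambda^{[1]}(X_1)=\tfrac12(\log X_1)^2$ supplying the classical leading term.

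The hard part will be the explicit analytic identity that the specific series assembled in \eqref{eq:LAr} exponentiates to the geometric twist. Concretely this amounts to diagonalising the full-twist (framing-change) operator on the skein module of the annulus: its eigenvalues are the HOMFLY-PT framing factors attached to the components created by the relations of Figure~\ref{fig:HOMFLY-PT_skein_relation}, and the logarithm of the twist must reproduce $L_{\tskein}(C)$ eigenvalue by eigenvalue. Matching the $\arcsinh$-prefactor to the closed-loop evaluation $2\sinh(\rho h)/h$, the $\exp(\rho h)$ weights to the self-framing contributions, and the cubic term $-\tfrac13\rho^3h^2$ to the residual writhe correction is where the delicate cancellations occur. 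As a consistency check I would apply the comparison maps $\varpi$ and $\psi$ of \eqref{eq:comparison_rel}, verifying that the limit $h\to 0$, $\rho\to 1/2$ recovers the classical formula of Theorem~\ref{thm:t_Cformula}; this confirms the leading behaviour but, since the HOMFLY-PT statement is strictly richer in $h$ and $\rho$, cannot replace the intrinsic annulus computation.
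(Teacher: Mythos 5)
First, note that this survey states Theorem~\ref{thm_main_Dehn_tskein} as an imported result of \cite{TsujiHOMFLY-PTskein} and contains no proof of it, so your proposal can only be judged against the cited source's strategy and on its internal coherence. Your overall architecture --- reduce to a regular neighbourhood of $C$ by functoriality, express the geometric twist as the wrapping operator on the skein module of the annulus, and match its logarithm against \eqref{eq:LAr} by diagonalising the framing-change operator --- is a reasonable reconstruction of the kind of annulus computation that underlies both this theorem and Theorem~\ref{thm_main_Dehn_skein}, and your observation that the divided-difference recursion for $\lambda^{[n+1]}$ is what makes the iterated renormalised commutators collapse is the right heuristic. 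But as written the central step remains a description of what must be proved rather than a proof: the identification of the eigenvalues of the twist with the series assembled from the $\lambda^{[n]}$, the $\exp(n\rho h)$ weights and the $-\tfrac13\rho^3h^2$ correction is exactly the content of the theorem, and nothing in the proposal pins it down.

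There is also a concrete gap in your treatment of the independence of $L_{\tskein}(C)$ from $r_C$. In \eqref{LAC} the path $r_C$ is only required to be a simple path such that $\nu^{-1}r_C$ is \emph{freely} homotopic to the (unoriented) curve $C$. Two admissible choices $r_C$, $r_C'$ therefore give based loops $\nu^{-1}r_C$ and $\nu^{-1}r_C'$ that are merely conjugate in $\pi$ (and possibly inverse to one another, since $C$ carries no orientation); they need not be homotopic rel endpoints, so $r_C$ and $r_C'$ do \emph{not} differ in $\pi^+_{\bullet,*}$ only by curls. The independence statement is really an invariance of \eqref{eq:LAr} under conjugation and orientation reversal of $\nu^{-1}r$, which must come from the conjugation-invariance of the closure operation $\vert - \vert$ entering $\varphi_n$ and from a symmetry of the $\lambda^{[n]}$ generalising $L(x^{-1})=L(x)$; the curl bookkeeping you describe handles only the regular-homotopy (framing) ambiguity, which is the easiest of the three. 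Separately, your reduction of the main identity to the single generator $\varphi(r)$ presupposes that the tangles $\varphi(r)$ generate $\ctskein(\Sigma,\bullet,*)$ under the $\nu^{-1}$-concatenation product; products of single-string tangles are again single-string tangles without closed components, so this generation claim needs an argument (via the skein relations in the completion) before the reduction is legitimate.
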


\subsection{Generalized Dehn twists on skein modules}

Based on Theorems \ref{thm_main_Dehn_skein} and \ref{thm_main_Dehn_tskein}, 
we define generalized Dehn twists on skein modules.
In both cases, there are two possible definitions 
which we shall refer to as ``geometric'' and ``algebraic'' versions. \\

\noindent
\textbf{$\star$ The case of the Kauffman bracket skein module.}\\

\noindent
Let $K$ be a framed unoriented knot in $\Sigma \times [-1,+1]$.
On the one hand, we can regard $K$ as an embedding of the annulus $R := S^1 \times [-1,+1]$ 
into $\Sigma \times [-1,+1]$ mapping  the core $C_R:= S^1 \times \{ 0\}$ to $K$, and we extend it 
to an embedding $e=e_K$ of the solid torus $R\times [-1,+1]$ 
into $\Sigma \times [-1,+1]$ whose image is a tubular neighborhood of $K$.
By Remark \ref{rem:skein_functorial}, there is an induced map $e_* \colon \skein(R) \to \skein(\Sigma)$ of skein algebras.
The core $C_R$ is a simple closed curve in $R$, and thus the element $L_{\skein}(C_R) \in \cskein (R)$ is defined.
Then we set
\[
L_{\skein}^{\rm geom}(K) := e_* (L_{\skein}(C_R)) \in \cskein (\Sigma).
\]

\begin{dfn}
Let 
$\ t^{\rm geom}_{\skein,K} := \exp \big( \sigma (L^{\rm geom}_{\skein} (K))\big)
\colon \cskein (\Sigma,\bullet,*) \to \cskein(\Sigma,\bullet,*).$
\end{dfn}

On the other hand, we can regard $K$ as an element of $\skein (\Sigma)$
and  use formula \eqref{eq:LKC} more directly to set 
\[
L^{\rm alg}_{\skein} (K) :=
\frac{-A+A^{-1}}{4 \log (-A)}
\left( \arccosh \left( \frac{-K}{2} \right) \right)^2
-(-A+A^{-1}) \log (-A).
\] 

\begin{dfn}
Let
$\ t^{\rm alg}_{\skein, K}:= \exp \big( \sigma (L^{\rm alg}_{\skein}(K))\big)
\colon \cskein (\Sigma,\bullet,*)
\to \cskein(\Sigma,\bullet,*).$
\end{dfn}

Both $t^{\rm geom}_{\skein,K}$ and $t^{\rm alg}_{\skein,K}$ 
are filtration-preserving automorphisms of $\cskein(\Sigma,\bullet,*)$.
Note that, if $K$ is given by a simple closed curve $C$ in $\Sigma$ as in Theorem \ref{thm_main_Dehn_skein}, 
then these automorphisms coincide and are induced by the classical Dehn twist along $C$.

We now discuss the relationship between the skein  versions of generalized Dehn twists 
and the original version \eqref{eq:tCexpgen}.
To compare them,  note the following two facts.
On the one hand, the automorphisms $t^{\rm geom}_{\skein,K}$ and $t^{\rm alg}_{\skein,K}$ 
induce automorphisms of $\cskein^{-1}(\Sigma,\bullet,*)$ in the natural way.
On the other hand, by using the Lie action $\sigma$ of $S'(\bGL)$ on $S'(\bGL) \otimes \Q \pi_{\bullet,*}$, one can define the automorphism $\exp(\sigma(L(\gamma)))$ of the $I$-adic completion of $S'(\bGL) \otimes \Q \pi_{\bullet,*}$ 
for any closed curve $\gamma$ in $\Sigma$.
By Proposition \ref{prop:psi}\,(iii) \& (iv), $\exp(\sigma(L(\gamma)))$ induces an automorphism of $\cskein^{-1}(\Sigma,\bullet,*)$.

\begin{prop}
\label{prop_gen_Dehn_twist_GS}
Let $K$ be a framed unoriented knot in $\Sigma \times [-1,+1]$ 
which projects onto a closed curve $\gamma$ in $\Sigma$.
Then,
\[
\psi(L(\gamma)) = \varpi( L^{\rm geom}_{\skein}(K)) = \varpi( L^{\rm alg}_{\skein}(K)) \in \cskein^{-1}(\Sigma)
\]
where the maps $\psi$ and $\varpi$ are as in \eqref{eq:comparison_abs}.
In particular, $t^{\rm geom}_{\skein,K}$ and $t^{\rm alg}_{\skein,K}$ induce the same automorphism of $\cskein^{-1}(\Sigma,\bullet,*)$ as the one induced by the automorphism $\exp(\sigma(L(\gamma)))$ 
of the $I$-adic completion of $S'(\GL) \otimes \Q\pi_{\bullet,*}$.
\end{prop}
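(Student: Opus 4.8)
The plan is to establish the displayed chain of equalities in $\cskein^{-1}(\Sigma)$ and then read off the statement about automorphisms from the fact that $\sigma$ and $\exp\sigma(-)$ are transported through the maps $\varpi$ and $\psi$ of the comparison diagrams \eqref{eq:comparison_abs} and \eqref{eq:comparison_rel}. The cheapest equality is $\varpi(L^{\mathrm{geom}}_{\skein}(K)) = \varpi(L^{\mathrm{alg}}_{\skein}(K))$, which in fact already holds in $\cskein(\Sigma)$ before projecting. Indeed, by Remark~\ref{rem:skein_functorial} the embedding $e=e_K$ induces a continuous $\Q[[A+1]]$-algebra homomorphism $e_*\colon \cskein(R)\to\cskein(\Sigma)$ carrying the core $C_R$ to $K$ (so $e_*(C_R^n)=K^n$) and the empty link to the empty link. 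The right-hand side of \eqref{eq:LKC} is a power series in $(C_R+2)$ with coefficients in $\Q[[A+1]]$, added to a scalar multiple of the unit; applying the continuous algebra map $e_*$ therefore substitutes $K$ for $C_R$ verbatim, giving $L^{\mathrm{geom}}_{\skein}(K)=e_*(L_{\skein}(C_R))=L^{\mathrm{alg}}_{\skein}(K)$.

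The heart of the proof is the remaining equality $\psi(L(\gamma))=\varpi(L^{\mathrm{alg}}_{\skein}(K))$. First I would specialize the two scalar coefficients of \eqref{eq:LKC} at $A=-1$: by \eqref{eq:fraction} the fraction $\tfrac{-A+A^{-1}}{4\log(-A)}$ becomes $\tfrac12$, while $(-A+A^{-1})\log(-A)$ lies in $(A+1)\Q[[A+1]]$ and is killed by $\varpi$. Since $\varpi$ is a continuous algebra homomorphism, it turns the power series $(\arccosh(-K/2))^2$ into the same series in $\varpi(K)$, whence $\varpi(L^{\mathrm{alg}}_{\skein}(K))=\tfrac12\bigl(\arccosh(-\varpi(K)/2)\bigr)^2$. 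By the defining property of $\psi$ one has $\varpi(K)=-\psi(\|\gamma\|)$, and because $\psi$ is an algebra homomorphism this rewrites as $\psi\bigl(\tfrac12(\arccosh(\|\gamma\|/2))^2\bigr)$, the argument lying in the completion of $S'(\bGL)$ since $\|\gamma\|-\|1\|$ is in the augmentation ideal. Thus it remains to prove the congruence $L(\gamma)\equiv \tfrac12(\arccosh(\|\gamma\|/2))^2$ modulo $\ker\psi$.

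To verify this I would pass to the trace picture, using the commutative triangle $\operatorname{tr}\circ\psi=\operatorname{tr}$ and the isomorphism $\operatorname{tr}\colon \hskein(\Sigma)\cong\Q[X_\pi]^{\operatorname{GL_2}}$, so that membership in $\ker\psi$ can be tested after applying $\operatorname{tr}$ and reduces to an identity of regular functions on the $\operatorname{SL_2}$-representation scheme. Concretely, Proposition~\ref{prop:psi}(ii) identifies $\ker\psi$ with the ideal of the trace relations $\|xx'\|+\|x^{-1}x'\|-\|x\|\|x'\|$, which for powers of $\gamma$ give the Chebyshev recursion $\|\gamma^{n+1}\|=\|\gamma\|\,\|\gamma^n\|-\|\gamma^{n-1}\|$; hence one may substitute $\|\gamma^n\|\mapsto\operatorname{tr}(g^n)$ for $g:=\rho(\gamma)\in\operatorname{SL_2}$. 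Writing $\operatorname{tr}(g)=2\cosh t$, the right-hand side becomes $\tfrac12(\arccosh(\cosh t))^2=\tfrac12 t^2$, while the left-hand side equals one half of the naive matrix trace $\operatorname{tr}\bigl(\rho(L(\gamma))\bigr)=\operatorname{tr}\bigl(\tfrac12(\log g)^2\bigr)=t^2$, namely $\tfrac12 t^2$, since $\log g$ has eigenvalues $\pm t$. The two functions agree, so the congruence holds and $\psi(L(\gamma))=\varpi(L^{\mathrm{alg}}_{\skein}(K))=\varpi(L^{\mathrm{geom}}_{\skein}(K))$.

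I expect the genuine obstacle to be precisely this trace computation, and in particular the bookkeeping of the outer factor $\tfrac12$ relating oriented and unoriented loops: because $\operatorname{tr}(\|x\|)=\operatorname{tr}\rho(x)$ (and not $2\operatorname{tr}\rho(x)$), the trace on $\bGL$ is one half of the naive trace inherited from $\widehat{\Q\pi}$, and it is exactly this half that conspires with the coefficient $\tfrac12$ coming from \eqref{eq:fraction} to match the $\tfrac12$ in $L(x)=\tfrac12(\log x)^2$. A secondary technical point is to make the trace argument rigorous in the completed setting---either by arguing degree by degree for the skein filtrations or by working directly with the completed kernel of $\psi$---and to check that the branches of $\arccosh$ and $\log$, together with the convergence of all series, are compatible with those filtrations. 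Granting that the three skein elements coincide, the ``in particular'' clause is immediate: feeding the common value into $\exp\sigma(-)$ and using that $\sigma$ is equivariant along $\varpi$ and along $\psi$ (the commutative diagrams of Subsection~\ref{subsec:comparison_ma} together with Proposition~\ref{prop:psi}(iii)--(iv)), the automorphisms $t^{\mathrm{geom}}_{\skein,K}$, $t^{\mathrm{alg}}_{\skein,K}$ and $\exp(\sigma(L(\gamma)))$ all descend to one and the same automorphism of $\cskein^{-1}(\Sigma,\bullet,*)$.
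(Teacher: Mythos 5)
Your treatment of the main equality $\psi(L(\gamma))=\varpi(L^{\rm alg}_{\skein}(K))$ is correct and essentially the paper's: the paper reduces to showing $\psi(\zettaisq{\tfrac12(\log r)^2})=\big(\arccosh(\psi(\zettaisq{r})/2)\big)^2$ and proves this via the formal identity $(\log r)^2=\big(\arccosh(\tfrac{r+r^{-1}}{2})\big)^2$ in $\widehat{\Q\pi}$ together with repeated use of the relation $\psi(\zettaisq{(r+r^{-1})x})=\psi(\zettaisq{r}\,\zettaisq{x})$ from Proposition \ref{prop:psi}(ii), whereas you verify the same congruence after composing with the trace isomorphism $\hskein(\Sigma)\cong\Q[X_\pi]^{\operatorname{GL_2}}$ and evaluating on $\operatorname{SL}_2$-representations. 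The two verifications are equivalent (your Chebyshev recursion \emph{is} the relation of Proposition \ref{prop:psi}(ii)), and your bookkeeping of the factor $\tfrac12$ between $\zettaiti{-}$ and $\zettaisq{-}$ is right. The ``in particular'' clause is handled as in the paper.

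The genuine flaw is in your first step. The map $e_*\colon \skein(R)\to\skein(\Sigma)$ induced by a tubular neighborhood of $K$ is \emph{not} an algebra homomorphism: the embedding does not carry the vertical $[-1,+1]$-direction of $R\times[-1,+1]$ to that of $\Sigma\times[-1,+1]$, so $e_*(C_R^{\,n})$ is the $n$-cable of $K$ along its framing, while $K^n$ is $n$ copies of $K$ stacked one entirely above the other; these differ by crossing changes between components whenever $\gamma$ has double points. Already for a kinked unknot in a disk, the $2$-cable is a Hopf link with bracket $(-A^4-A^{-4})(-A^2-A^{-2})$, whereas $K\cdot K$ is a $2$-component unlink with bracket $(-A^2-A^{-2})^2$; these agree only at $A=-1$. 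Consequently $L^{\rm geom}_{\skein}(K)\neq L^{\rm alg}_{\skein}(K)$ in $\cskein(\Sigma)$ in general --- which is precisely why the paper keeps both versions and why $t^{\rm geom}_{\skein,K}$ and $t^{\rm alg}_{\skein,K}$ are a priori distinct automorphisms. The statement you actually need is only the equality after $\varpi$, and it does hold, but for the reason the paper gives: in $\hskein(\Sigma)$ the class of a link depends only on the homotopy classes of its components, so the cable and the stacked power of $K$ have the same image. Substituting that observation for your $e_*$ argument repairs the proof.
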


\begin{proof}
In $\skein^{-1}(\Sigma)$, the class of a tangle depends only on its homotopy class.
Hence $\varpi( L^{\rm geom}_{\skein}(K)) = \varpi( L^{\rm alg}_{\skein}(K))$.
Note also, that for any $r\in \pi$, the element
$\zettaiti{(\log r)^2}= \zettaisq{(1/2) (\log r)^2}$
is in the completion of $\bGL$.
Since we have
\[
\left. \frac{-A+A^{-1}}{4 \log (-A)} \right|_{A=-1} = \frac{1}{2}
\]
by \eqref{eq:fraction}, it suffices to prove that
$\psi(\zettaisq{(1/2)(\log r)^2}) =  \big( \arccosh(\psi (\zettaisq{r})/2) \big)^2$. We compute
\[
\psi (\zettaisq{ (\log r)^2}) 
=
\psi \left(
\left\vert\! \left\vert
\left( \arccosh \big( \frac{r+r^{-1}}{2} \big) \right)^2
\right\vert\! \right\vert
\right)
= \psi \left( \left(
\arccosh \big( \frac{\zettaisq {r}}{2} \big)
\right)^2 \zettaisq{1} \right),
\]
where the second equality is a repeated use of the identity $\psi(\zettaisq{(r+r^{-1})x}) = \psi(\zettaisq{r} \zettaisq{x})$.
(See Proposition \ref{prop:psi}.(ii).)
Since $\psi$ is an algebra homomorphism and $\psi(\zettaisq{1}) = 2$, we obtain the assertion.
\end{proof}

\begin{rem} \label{rem:relation1}
The automorphism $\exp(\sigma(L(\gamma)))$ in Proposition \ref{prop_gen_Dehn_twist_GS} restricts to an automorphism 
of the $I$-adic completion of $\Q\pi_{\bullet,*}$.
If we identify $\pi_{\bullet,*}$ with the fundamental group $\pi$ by using the path $\nu$, this automorphism
(which is the one constructed from the exponential of the groupoid version \eqref{eq:sigma_bis} of the action $\sigma$) is identical with the generalized Dehn twist $t_{\gamma}$ on $\widehat{\Q \pi}$.\\
\end{rem}

\noindent
\textbf{$\star$ The case of the HOMFLY-PT skein module.}\\  

\noindent
On the one hand,  let $K$ be a framed unoriented knot  in $\Sigma \times [-1,+1]$.
As we did in  the case of the Kauffman bracket skein module, 
we consider a tubular neighborhood $e: {R \times [-1,+1] } \to \Sigma \times [-1,+1]$ of $K$ and set
\[
L_{\tskein}^{\rm geom}(K) := e_* (L_{\tskein}(C_R)) \in \ctskein (\Sigma),
\]
where $e_* \colon \ctskein(R) \to \ctskein(\Sigma)$ is the induced map of completed skein algebras
and $L_{\tskein}(C_R) \in \ctskein (R)$ is defined by \eqref{LAC}.

\begin{dfn}
Let $\ t^{\rm geom}_{\tskein,K} := \exp \big( \sigma (L^{\rm geom}_{\tskein} (K))\big)
\colon \ctskein (\Sigma,\bullet,*) \to\ctskein(\Sigma,\bullet,*)$.
\end{dfn}

On the other hand, let $r \in \pi^+_{\bullet,*}$.
Then, by \eqref{eq:LAr}, the skein element $L_{\tskein}(r) \in \ctskein (\Sigma)$ is defined.

\begin{dfn}
Let 
$\ t^{\rm alg}_{\tskein, r} :=\exp \big( \sigma (L_{\tskein} (r))\big)
\colon \ctskein (\Sigma,\bullet,*) \to \ctskein(\Sigma,\bullet,*)$.
\end{dfn}

Both $t^{\rm geom}_{\tskein,K}$ and $t^{\rm alg}_{\tskein, r}$ 
are filtration-preserving algebra automorphisms of $\ctskein(\Sigma,\bullet,*)$,
 where the algebra structure of $\ctskein(\Sigma,\bullet,*)$ is the one described in the second part of \S \ref{subsection:skein_Dehn}.
If $C$ is a simple closed curve in $\Sigma$ as in Theorem~\ref{thm_main_Dehn_tskein}, 
then both $t^{\rm geom}_{\tskein,K_C}$ and $t^{\rm alg}_{\tskein, r_C}$ coincide with
the automorphism induced by the classical Dehn twist along $C$, 
where  $K_C$ is  the framed knot in $\Sigma \times [-1,+1]$ 
corresponding to $C$ and $r_C \in \pi^+_{\bullet,*}$ is chosen as in \eqref{LAC}.

\begin{prop}
Let $K$ be a framed unoriented knot in $\Sigma \times [-1,+1]$ 
 which projects onto a closed curve $\gamma$ in $\Sigma$,
and let $r\in \pi^+_{\bullet,*}$ be such that $\gamma$ is homotopic to $\nu^{-1}r$.
Then,
\[
L(\gamma) = \varpi( L^{\rm geom}_{\tskein}(K)) = \varpi(L_{\tskein}(r)) \in \widehat{\GL}
\]
where the map $\varpi$ is as in \eqref{eq:comparison_abs}. 
In particular, the following diagram is commutative:
\[
\xymatrix@C=60pt{
\ctskein(\Sigma,\bullet,*) \ar[r]^{\text{$t^{\rm geom}_{\tskein,K}$ or $t^{\rm alg}_{\tskein, r}$}} \ar[d]^{\varpi} & \ctskein(\Sigma,\bullet,*) \ar[d]^{\varpi} \\
\widehat{S'}(\GL) \widehat{\otimes} \widehat{\Q \pi_{\bullet,*}} \ar[r]_{\exp(\sigma(L(\gamma)))} & \widehat{S'}(\GL) \widehat{\otimes} \widehat{\Q \pi_{\bullet,*}}
}
\]
\end{prop}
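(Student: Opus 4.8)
The plan is to establish the two asserted equalities in $\widehat{\GL}$ separately — the ``algebraic'' one $\varpi(L_{\tskein}(r)) = L(\gamma)$ and the ``geometric'' one $\varpi(L^{\rm geom}_{\tskein}(K)) = L(\gamma)$ — and then to deduce the commutativity of the square as a formal consequence, using that $\varpi$ is equivariant for the Lie actions $\sigma$ (as recorded by the commutative squares in Section~\ref{subsec:comparison_ma}).

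For the algebraic equality I would compute the effect of $\varpi$, which specializes $h=0$ and $\rho=1/2$ and then records free homotopy classes, on the series \eqref{eq:LAr}. First, the prefactor $\big(\frac{h/2}{\arcsinh(h/2)}\big)^2$ has constant term $1$ and the correction $-\frac13\rho^3h^2$ vanishes at $h=0$. For $n\geq 2$ the summand carries the factor $(-h)^{n-1}$, while $\varphi_n(\lambda^{[n]}(\cdots))$ lies in the $\Q[\rho][[h]]$-module $\ctskein(\Sigma)$ and hence is $O(h^{n-1})$; thus only the $n=1$ term survives. Since $\lambda^{[1]}(X_1)=\frac12(\log X_1)^2$ and $r_{1,1}=\exp(\rho h)r$ specializes to $r$ at $h=0$, one is left with $\varpi(\varphi_1(\frac12(\log r)^2))$. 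Here I would use that $\varphi_1(s)=\zettaiti{\varphi(s)}$ is a knot whose projection to $\Sigma$ is freely homotopic to $\nu^{-1}s$, so that $\varpi\circ\varphi_1$, at $h=0$, is induced by the group homomorphism $\pi^+_{\bullet,*}\to\pi$, $s\mapsto \nu^{-1}s$ (forgetting the immersion datum, then identifying $\pi_{\bullet,*}\cong\pi$ via $\nu$), followed by the free-homotopy projection $\zettaiti{-}\colon \widehat{\Q\pi}\to\widehat{\GL}$. As this homomorphism sends $r$ to $\gamma$ and commutes with $\log$ and products, it sends $\frac12(\log r)^2$ to $\frac12(\log\gamma)^2$; projecting then gives $\zettaiti{L(\gamma)}=L(\gamma)\in\widehat{\GL}$, which is precisely the meaning of $L(\gamma)$ in $\widehat{\GL}$ from Remark~\ref{rem:piconj}. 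The point requiring care is the interchange of the $h=0$ specialization with the infinite sum and the completion, which is licensed by the filtration properties (i)--(iv) of Section~\ref{sec:skein}.

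For the geometric equality I would first observe that $\varpi$ is natural under the embeddings of Remark~\ref{rem:skein_functorial}: since $\varpi$ records the free homotopy class in $\Sigma\times[-1,+1]$ of (the projection of) a link, and the projection $\Sigma\times[-1,+1]\to\Sigma$ is a homotopy equivalence, $\varpi$ factors through $\pi_1(\Sigma\times[-1,+1])\cong\pi$; consequently $\varpi\circ e_*=e^{\GL}_*\circ\varpi_R$ for any $e\colon R\times[-1,+1]\to\Sigma\times[-1,+1]$, where $e^{\GL}_*$ is induced by $e_*\colon \pi_1(R)\to\pi$. Applying the algebraic equality on the annulus $R$ to its core $C_R$ (a simple closed curve) gives $\varpi_R(L_{\tskein}(C_R))=L(C_R)$, and since the tubular neighborhood $e=e_K$ sends the free homotopy class of $C_R$ to that of $\gamma$ (because $K=e(C_R)$ projects onto $\gamma$), naturality yields
\[
\varpi(L^{\rm geom}_{\tskein}(K))=\varpi(e_*(L_{\tskein}(C_R)))=e^{\GL}_*(L(C_R))=L(\gamma).
\]

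Finally, for the diagram, I would invoke the equivariance of $\varpi\colon \ctskein(\Sigma,\bullet,*)\to\widehat{S'}(\GL)\widehat\otimes\widehat{\Q\pi_{\bullet,*}}$ over the Lie algebra homomorphism $\varpi\colon \ctskein(\Sigma)\to\widehat{S'}(\GL)$: for every $x\in\ctskein(\Sigma)$ one has $\varpi\circ\sigma(x)=\sigma(\varpi(x))\circ\varpi$, whence, iterating and passing to the limit in the completion, $\varpi\circ\exp(\sigma(x))=\exp(\sigma(\varpi(x)))\circ\varpi$. Taking $x=L^{\rm geom}_{\tskein}(K)$ (respectively $x=L_{\tskein}(r)$) and using $\varpi(x)=L(\gamma)$ turns this into exactly the asserted commutative square. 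I expect the genuinely delicate step to be the algebraic equality, i.e.\ controlling the $h\to 0$ behaviour of \eqref{eq:LAr} and confirming that $\varpi\circ\varphi_1$ is the ``closure-then-free-homotopy'' map; once that is pinned down, the geometric equality and the diagram follow formally.
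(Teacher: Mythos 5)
Your proposal is correct and follows essentially the same route as the paper's proof: the key points in both are that $\varpi$ sends $h$ to $0$ (so only the $n=1$ term of \eqref{eq:LAr} survives, the prefactor specializes to $1$, and $\lambda^{[1]}(X_1)=\tfrac12(\log X_1)^2$ yields $\zettaiti{\tfrac12(\log \nu^{-1}r)^2}=L(\gamma)$), and that $\varpi$ of a tangle depends only on its homotopy class (which gives the agreement of the geometric and algebraic versions). Your extra care about the naturality of $\varpi$ under $e_*$ and the equivariance argument for the square only makes explicit what the paper leaves implicit.
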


\begin{proof}
Since the image of a tangle by $\varpi$ depends only on its homotopy class, we have $\varpi( L^{\rm geom}_{\tskein}(K)) = \varpi(L_{\tskein}(r))$.
Furthermore, the map $\varpi$ maps $h$ to $0$:
hence, in the expression of $L_{\tskein}(r)$ in \eqref{eq:LAr}, all the terms for $n\ge 2$ do not contribute to $\varpi(L_{\tskein}(r))$.
Since we have 
\[
\left. \left( \frac{h/2}{\arcsinh (h/2)} \right)^2 \right|_{h=0} = 1
\]
and $\lambda^{[1]}(X_1) = (1/2)(\log X_1)^2$, we conclude that 
$\varpi(L_{\tskein}(r))$ is equal to $|(1/2)(\log \nu^{-1}r)^2| = L(\gamma)$.
\end{proof}

\subsection{The generalized Dehn twist along a figure eight}

In this subsection, we work with $\hskein (\Sigma)$, the Kauffman bracket skein algebra ``at $A=-1$'', and give an explicit description of the generalized Dehn twist along a figure eight for $\hskein (\Sigma)$.
Since all of the constructions in the previous subsections apply to any compact oriented surface with boundary  
(cf. Remark \ref{rem:skein_functorial}),
we localize the situation and consider a figure eight $\gamma$ in a pair of pants $\Sigma_{0,3}$, 
i.e$.$ a surface of genus $0$ with $3$ boundary components, as shown in Figure \ref{fig:paths_r5infty}.

We fix some notations:
take one point from each boundary component of $\Sigma_{0,3}$ 
and consider the fundamental groupoid based at those three points $\{ *_0, *_1, *_2\}$.
Then the paths $r_1$, $r_2$, $r_3$ and $r_4$ shown in Figure \ref{fig:paths_r1234} are generators for this groupoid.
Let $\pi_j := \pi_1(\Sigma_{0,3},*_j)$ and $\pi_{i,j} := \pi_1(\Sigma_{0,3},*_i,*_j)$ for any $i,j \in\{0,1,2\}$.
Diagrams \eqref{eq:comparison_abs} and \eqref{eq:comparison_rel} exist also in this setting:
in particular, we have a surjective map of Lie algebras   
$\psi\colon S'(\mathbb{Q}\, \vert\!\vert \pi_j \vert\!\vert) \to \hskein (\Sigma_{0,3})$ 
and a surjective map of Lie modules $\psi_{i,j} \colon S'(\mathbb{Q}\, \vert\!\vert \pi_j \vert\!\vert) \otimes \Q \pi_{i,j} 
\to \hskein(\Sigma_{0,3},*_i,*_j)$.

\begin{figure}[ht]
\begin{minipage}{0.45\hsize}
  \centering
{\unitlength 0.1in%
\begin{picture}(18.0000,12.0000)(2.0000,-14.0000)%
%
\special{pn 13}%
\special{ar 1100 800 900 600 0.0000000 6.2831853}%
%
\special{pn 13}%
\special{ar 650 800 225 225 0.0000000 6.2831853}%
%
\special{pn 13}%
\special{ar 1550 800 225 225 0.0000000 6.2831853}%
\put(9.7200,-11.9800){\makebox(0,0)[lb]{$\gamma$}}%
%
\special{pn 4}%
\special{sh 1}%
\special{ar 1100 1400 16 16 0 6.2831853}%
%
\special{pn 4}%
\special{sh 1}%
\special{ar 650 1025 16 16 0 6.2831853}%
%
\special{pn 4}%
\special{sh 1}%
\special{ar 1550 1025 16 16 0 6.2831853}%
%
\special{pn 8}%
\special{ar 650 800 300 375 1.5707963 4.7123890}%
%
\special{pn 8}%
\special{ar 1550 800 300 375 4.7123890 1.5707963}%
%
\special{pn 8}%
\special{pa 1550 1175}%
\special{pa 1518 1176}%
\special{pa 1486 1176}%
\special{pa 1455 1172}%
\special{pa 1426 1166}%
\special{pa 1398 1156}%
\special{pa 1371 1143}%
\special{pa 1346 1127}%
\special{pa 1322 1108}%
\special{pa 1298 1087}%
\special{pa 1276 1065}%
\special{pa 1254 1040}%
\special{pa 1233 1013}%
\special{pa 1213 985}%
\special{pa 1193 956}%
\special{pa 1173 925}%
\special{pa 1154 894}%
\special{pa 1136 862}%
\special{pa 1117 830}%
\special{pa 1100 800}%
\special{fp}%
%
\special{pn 8}%
\special{pa 1550 425}%
\special{pa 1518 424}%
\special{pa 1486 424}%
\special{pa 1455 428}%
\special{pa 1426 434}%
\special{pa 1398 444}%
\special{pa 1371 457}%
\special{pa 1346 473}%
\special{pa 1322 492}%
\special{pa 1298 513}%
\special{pa 1276 535}%
\special{pa 1254 560}%
\special{pa 1233 587}%
\special{pa 1213 615}%
\special{pa 1193 644}%
\special{pa 1173 675}%
\special{pa 1154 706}%
\special{pa 1136 738}%
\special{pa 1117 770}%
\special{pa 1100 800}%
\special{fp}%
%
\special{pn 8}%
\special{pa 650 425}%
\special{pa 682 424}%
\special{pa 714 424}%
\special{pa 745 428}%
\special{pa 774 434}%
\special{pa 802 444}%
\special{pa 829 457}%
\special{pa 854 473}%
\special{pa 878 492}%
\special{pa 902 513}%
\special{pa 924 535}%
\special{pa 946 560}%
\special{pa 967 587}%
\special{pa 987 615}%
\special{pa 1007 644}%
\special{pa 1027 675}%
\special{pa 1046 706}%
\special{pa 1064 738}%
\special{pa 1083 770}%
\special{pa 1100 800}%
\special{fp}%
%
\special{pn 8}%
\special{pa 650 1175}%
\special{pa 682 1176}%
\special{pa 714 1176}%
\special{pa 745 1172}%
\special{pa 774 1166}%
\special{pa 802 1156}%
\special{pa 829 1143}%
\special{pa 854 1127}%
\special{pa 878 1108}%
\special{pa 902 1087}%
\special{pa 924 1065}%
\special{pa 946 1040}%
\special{pa 967 1013}%
\special{pa 987 985}%
\special{pa 1007 956}%
\special{pa 1027 925}%
\special{pa 1046 894}%
\special{pa 1064 862}%
\special{pa 1083 830}%
\special{pa 1100 800}%
\special{fp}%
\end{picture}}%
\caption{$\gamma$}
\label{fig:paths_r5infty}
\end{minipage}
\begin{minipage}{0.45\hsize}
  \centering
{\unitlength 0.1in%
\begin{picture}(18.0000,12.1700)(2.0000,-14.1700)%
%
\special{pn 13}%
\special{ar 1100 800 900 600 0.0000000 6.2831853}%
%
\special{pn 13}%
\special{ar 650 800 225 225 0.0000000 6.2831853}%
%
\special{pn 13}%
\special{ar 1550 800 225 225 0.0000000 6.2831853}%
%
\special{pn 8}%
\special{pa 1100 1400}%
\special{pa 650 1025}%
\special{fp}%
%
\special{pn 8}%
\special{pa 1100 1400}%
\special{pa 1550 1025}%
\special{fp}%
\special{pa 1550 1025}%
\special{pa 1550 1025}%
\special{fp}%
%
\special{pn 8}%
\special{pa 1362 1175}%
\special{pa 1362 1175}%
\special{fp}%
\special{pa 1288 1175}%
\special{pa 1362 1175}%
\special{fp}%
\special{pa 1362 1250}%
\special{pa 1362 1250}%
\special{fp}%
%
\special{pn 8}%
\special{pa 1362 1250}%
\special{pa 1362 1175}%
\special{fp}%
%
\special{pn 8}%
\special{pa 838 1175}%
\special{pa 838 1258}%
\special{fp}%
\special{pa 838 1182}%
\special{pa 912 1182}%
\special{fp}%
\special{pa 912 1182}%
\special{pa 912 1182}%
\special{fp}%
\put(6.3800,-12.5000){\makebox(0,0)[lb]{$r_1$}}%
\put(14.3900,-12.5000){\makebox(0,0)[lb]{$r_2$}}%
\put(8.6400,-5.0000){\makebox(0,0)[lb]{$r_3$}}%
\put(12.1400,-5.0000){\makebox(0,0)[lb]{$r_4$}}%
%
\special{pn 4}%
\special{sh 1}%
\special{ar 1100 1400 16 16 0 6.2831853}%
%
\special{pn 4}%
\special{sh 1}%
\special{ar 650 1025 16 16 0 6.2831853}%
\special{sh 1}%
\special{ar 1550 1025 16 16 0 6.2831853}%
%
\special{pn 8}%
\special{ar 650 800 338 338 3.1415927 4.7123890}%
%
\special{pn 8}%
\special{ar 1550 800 338 338 3.1415927 4.7123890}%
%
\special{pn 8}%
\special{ar 1550 800 338 338 4.7123890 6.2831853}%
%
\special{pn 8}%
\special{ar 650 800 338 338 4.7123890 6.2831853}%
%
\special{pn 8}%
\special{pa 312 792}%
\special{pa 311 824}%
\special{pa 311 856}%
\special{pa 315 889}%
\special{pa 323 923}%
\special{pa 335 956}%
\special{pa 350 987}%
\special{pa 369 1015}%
\special{pa 391 1039}%
\special{pa 415 1057}%
\special{pa 441 1068}%
\special{pa 470 1073}%
\special{pa 500 1072}%
\special{pa 532 1066}%
\special{pa 565 1057}%
\special{pa 598 1045}%
\special{pa 632 1032}%
\special{pa 650 1025}%
\special{fp}%
%
\special{pn 8}%
\special{pa 1212 792}%
\special{pa 1211 824}%
\special{pa 1211 856}%
\special{pa 1215 889}%
\special{pa 1223 923}%
\special{pa 1235 956}%
\special{pa 1250 987}%
\special{pa 1269 1015}%
\special{pa 1291 1039}%
\special{pa 1315 1057}%
\special{pa 1341 1068}%
\special{pa 1370 1073}%
\special{pa 1400 1072}%
\special{pa 1432 1066}%
\special{pa 1465 1057}%
\special{pa 1498 1045}%
\special{pa 1532 1032}%
\special{pa 1550 1025}%
\special{fp}%
%
\special{pn 8}%
\special{pa 1888 792}%
\special{pa 1889 824}%
\special{pa 1889 856}%
\special{pa 1885 889}%
\special{pa 1877 923}%
\special{pa 1865 956}%
\special{pa 1850 987}%
\special{pa 1831 1015}%
\special{pa 1809 1039}%
\special{pa 1785 1057}%
\special{pa 1759 1068}%
\special{pa 1730 1073}%
\special{pa 1700 1072}%
\special{pa 1668 1066}%
\special{pa 1635 1057}%
\special{pa 1602 1045}%
\special{pa 1568 1032}%
\special{pa 1550 1025}%
\special{fp}%
%
\special{pn 8}%
\special{pa 988 792}%
\special{pa 989 824}%
\special{pa 989 856}%
\special{pa 985 889}%
\special{pa 977 923}%
\special{pa 965 956}%
\special{pa 950 987}%
\special{pa 931 1015}%
\special{pa 909 1039}%
\special{pa 885 1057}%
\special{pa 859 1068}%
\special{pa 830 1073}%
\special{pa 800 1072}%
\special{pa 768 1066}%
\special{pa 735 1057}%
\special{pa 702 1045}%
\special{pa 668 1032}%
\special{pa 650 1025}%
\special{fp}%
%
\special{pn 8}%
\special{pa 312 792}%
\special{pa 350 868}%
\special{fp}%
\special{pa 312 792}%
\special{pa 275 868}%
\special{fp}%
%
\special{pn 8}%
\special{pa 1212 792}%
\special{pa 1250 868}%
\special{fp}%
\special{pa 1212 792}%
\special{pa 1175 868}%
\special{fp}%
\put(6.0200,-9.8800){\makebox(0,0)[lb]{$*_1$}}%
\put(15.0000,-9.8500){\makebox(0,0)[lb]{$*_2$}}%
\put(10.6200,-15.6200){\makebox(0,0)[lb]{$*_0$}}%
\end{picture}}%
\caption{$r_1,r_2,r_3,r_4$}
\label{fig:paths_r1234}
\end{minipage}
\end{figure}

In order to describe the generalized Dehn twist along $\gamma$ as an automorphism of $\hskein(\Sigma_{0,3},*_i,*_j)$ for any $i,j$, 
it is enough to determine how it acts on each of $\psi_{0,1}(1\otimes r_1)$, $\psi_{0,2}(1\otimes r_2)$,
$\psi_{1,1}(1\otimes r_3)$ and $\psi_{2,2}(1\otimes r_4)$.
The last two ones are fixed by the generalized Dehn twist since $r_3$ and $r_4$ are disjoint from $\gamma$.
Let us compute the image of $\psi_{0,1}(1\otimes r_1)$
which, by Proposition~\ref{prop_gen_Dehn_twist_GS}, 
is equal to $\psi(t_{\gamma}(r_1)):=\psi_{0,1}(1 \widehat{\otimes} t_{\gamma}(r_1))$. 
The computation of the image of $\psi_{0,2}(1\otimes r_2)$ being similar, it is omitted here.

Firstly, we compute
\begin{align}
\label{eq:huit}& 
{\unitlength 0.1in%
\begin{picture}(36.4000,8.0000)(3.2000,-10.0000)%
%
\special{pn 13}%
\special{ar 1800 600 600 400 0.0000000 6.2831853}%
%
\special{pn 13}%
\special{ar 1500 600 150 150 0.0000000 6.2831853}%
%
\special{pn 13}%
\special{ar 2100 600 150 150 0.0000000 6.2831853}%
%
\special{pn 4}%
\special{sh 1}%
\special{ar 1800 1000 16 16 0 6.2831853}%
%
\special{pn 4}%
\special{sh 1}%
\special{ar 1500 750 16 16 0 6.2831853}%
\special{sh 1}%
\special{ar 2100 750 16 16 0 6.2831853}%
%
\special{pn 8}%
\special{ar 2099 596 201 251 4.7123890 1.5707963}%
%
\special{pn 8}%
\special{pa 2099 847}%
\special{pa 2067 848}%
\special{pa 2036 845}%
\special{pa 2007 838}%
\special{pa 1980 825}%
\special{pa 1955 809}%
\special{pa 1931 789}%
\special{pa 1909 765}%
\special{pa 1888 739}%
\special{pa 1867 711}%
\special{pa 1848 681}%
\special{pa 1829 650}%
\special{pa 1810 617}%
\special{pa 1798 596}%
\special{fp}%
%
\special{pn 8}%
\special{pa 2099 345}%
\special{pa 2067 344}%
\special{pa 2036 347}%
\special{pa 2007 354}%
\special{pa 1980 367}%
\special{pa 1955 383}%
\special{pa 1931 403}%
\special{pa 1909 427}%
\special{pa 1888 453}%
\special{pa 1867 481}%
\special{pa 1848 511}%
\special{pa 1829 542}%
\special{pa 1810 575}%
\special{pa 1798 596}%
\special{fp}%
%
\special{pn 8}%
\special{pa 1496 345}%
\special{pa 1528 344}%
\special{pa 1559 347}%
\special{pa 1588 354}%
\special{pa 1615 366}%
\special{pa 1640 383}%
\special{pa 1664 403}%
\special{pa 1686 426}%
\special{pa 1708 452}%
\special{pa 1728 481}%
\special{pa 1748 511}%
\special{pa 1767 542}%
\special{pa 1785 574}%
\special{pa 1798 596}%
\special{fp}%
%
\special{pn 13}%
\special{ar 3360 600 600 400 0.0000000 6.2831853}%
%
\special{pn 13}%
\special{ar 3060 600 150 150 0.0000000 6.2831853}%
%
\special{pn 13}%
\special{ar 3660 600 150 150 0.0000000 6.2831853}%
%
\special{pn 4}%
\special{sh 1}%
\special{ar 3360 1000 16 16 0 6.2831853}%
%
\special{pn 4}%
\special{sh 1}%
\special{ar 3060 750 16 16 0 6.2831853}%
\special{sh 1}%
\special{ar 3660 750 16 16 0 6.2831853}%
%
\special{pn 8}%
\special{ar 3056 596 201 251 1.5707963 4.7123890}%
%
\special{pn 8}%
\special{ar 3659 596 201 251 4.7123890 1.5707963}%
%
\special{pn 8}%
\special{pa 3659 847}%
\special{pa 3627 848}%
\special{pa 3596 845}%
\special{pa 3567 838}%
\special{pa 3540 825}%
\special{pa 3515 809}%
\special{pa 3491 789}%
\special{pa 3469 765}%
\special{pa 3448 739}%
\special{pa 3427 711}%
\special{pa 3408 681}%
\special{pa 3389 650}%
\special{pa 3370 617}%
\special{pa 3358 596}%
\special{fp}%
%
\special{pn 8}%
\special{pa 3659 345}%
\special{pa 3627 344}%
\special{pa 3596 347}%
\special{pa 3567 354}%
\special{pa 3540 367}%
\special{pa 3515 383}%
\special{pa 3491 403}%
\special{pa 3469 427}%
\special{pa 3448 453}%
\special{pa 3427 481}%
\special{pa 3408 511}%
\special{pa 3389 542}%
\special{pa 3370 575}%
\special{pa 3358 596}%
\special{fp}%
%
\special{pn 8}%
\special{pa 3056 345}%
\special{pa 3088 344}%
\special{pa 3119 347}%
\special{pa 3148 354}%
\special{pa 3175 366}%
\special{pa 3200 383}%
\special{pa 3224 403}%
\special{pa 3246 426}%
\special{pa 3268 452}%
\special{pa 3288 481}%
\special{pa 3308 511}%
\special{pa 3327 542}%
\special{pa 3345 574}%
\special{pa 3358 596}%
\special{fp}%
%
\special{pn 8}%
\special{ar 1500 600 200 256 3.1415927 4.7123890}%
%
\special{pn 8}%
\special{pa 1300 600}%
\special{pa 1297 633}%
\special{pa 1297 665}%
\special{pa 1305 694}%
\special{pa 1320 720}%
\special{pa 1342 745}%
\special{pa 1372 769}%
\special{pa 1405 791}%
\special{pa 1439 805}%
\special{pa 1467 807}%
\special{pa 1487 791}%
\special{pa 1498 759}%
\special{pa 1500 752}%
\special{fp}%
%
\special{pn 8}%
\special{pa 1800 596}%
\special{pa 1786 625}%
\special{pa 1771 653}%
\special{pa 1757 682}%
\special{pa 1743 710}%
\special{pa 1729 739}%
\special{pa 1714 768}%
\special{pa 1698 799}%
\special{pa 1686 830}%
\special{pa 1680 861}%
\special{pa 1683 888}%
\special{pa 1695 914}%
\special{pa 1715 937}%
\special{pa 1742 959}%
\special{pa 1772 980}%
\special{pa 1800 998}%
\special{fp}%
%
\special{pn 8}%
\special{pa 3358 598}%
\special{pa 3343 626}%
\special{pa 3328 655}%
\special{pa 3316 684}%
\special{pa 3304 714}%
\special{pa 3289 743}%
\special{pa 3269 768}%
\special{pa 3241 788}%
\special{pa 3209 801}%
\special{pa 3175 808}%
\special{pa 3142 806}%
\special{pa 3111 796}%
\special{pa 3086 777}%
\special{pa 3062 754}%
\special{pa 3060 752}%
\special{fp}%
%
\special{pn 8}%
\special{pa 3052 846}%
\special{pa 3084 850}%
\special{pa 3115 855}%
\special{pa 3146 863}%
\special{pa 3175 874}%
\special{pa 3204 888}%
\special{pa 3231 903}%
\special{pa 3258 921}%
\special{pa 3285 940}%
\special{pa 3311 960}%
\special{pa 3337 981}%
\special{pa 3360 1000}%
\special{fp}%
%
\special{pn 8}%
\special{pa 1924 408}%
\special{pa 1918 452}%
\special{fp}%
\special{pa 1924 408}%
\special{pa 1885 429}%
\special{fp}%
%
\special{pn 8}%
\special{pa 1500 346}%
\special{pa 1540 326}%
\special{fp}%
\special{pa 1500 346}%
\special{pa 1540 366}%
\special{fp}%
%
\special{pn 8}%
\special{pa 3080 346}%
\special{pa 3040 326}%
\special{fp}%
\special{pa 3080 346}%
\special{pa 3040 366}%
\special{fp}%
%
\special{pn 8}%
\special{pa 3458 442}%
\special{pa 3464 398}%
\special{fp}%
\special{pa 3458 442}%
\special{pa 3497 421}%
\special{fp}%
\put(25.2000,-6.5000){\makebox(0,0)[lb]{$-$}}%
\put(3.2000,-6.5000){\makebox(0,0)[lb]{$\sigma( \lvert\!\lvert \gamma \rvert\!\rvert)(r_1)=$}}%
\end{picture}}
\notag & \hspace{4.4em} = r_2 r_4 r_2^{-1} r_1 r_3^{-1}
-r_1 r_3 r_1^{-1} r_2 r_4^{-1} r_2^{-1} r_1
\end{align}
and we observe that  
$\gamma$ is the free homotopy class of $r_1 r_3 r_1^{-1}r_2 r_4^{-1} r_2^{-1}$. 
Next, we claim that 
\begin{equation} \label{eq:psisigmagamma}
\psi \big( \sigma(\zettaisq{\gamma})(r_1) \big)
=\psi \big( (r_5-r_5^{-1})r_1 -
\zettaisq{r_4} \otimes r_1(r_3-r_3^{-1}) \big)
\end{equation}
where $r_5 := r_1r_3r_1^{-1}r_2r_4r_2^{-1}$ is the loop parallel to the boundary component based at $*_0$
and we use the following shorthand notations: $\psi:= \psi_{0,1}$ and 
$r:=1\otimes r \in S'(\mathbb{Q}\, \vert\!\vert \pi_1 \vert\!\vert) \otimes \Q \pi_{0,1}$
for any $r\in \Q \pi_{0,1}$. 
To justify \eqref{eq:psisigmagamma}, observe that \eqref{eq:huit} implies 
\begin{align*}
\sigma(\zettaisq{\gamma})(r_1) &=
r_2(r_4 + r_4^{-1})r_2^{-1}r_1r_3^{-1}
-r_2r_4^{-1}r_2^{-1}r_1r_3^{-1} \\
& \quad
-r_1r_3r_1^{-1}r_2(r_4 + r_4^{-1})r_2^{-1}r_1
+r_1r_3r_1^{-1}r_2r_4r_2^{-1}r_1 \\
&\equiv
\zettaisq{r_4} \otimes r_1r_3^{-1} - r_5^{-1} r_1
- \zettaisq{r_4} \otimes r_1r_3 + r_5 r_1 \\
&= (r_5 - r_5^{-1}) r_1 - \zettaisq{r_4} \otimes r_1(r_3 - r_3^{-1})
\end{align*} 
where the second identity is modulo the kernel of $\psi$ (as it is described by Proposition \ref{prop:psi}.(iv)). 
To continue, note that  
$$
\sigma(x^n)(\psi(y)) = n x^{n-1} \sigma(x)(\psi(y))
$$
for any  $n\ge 0$, $x\in \hskein(\Sigma_{0,3})$ and any path $y$ from $*_0$ to $*_1$;
besides Proposition \ref{prop_gen_Dehn_twist_GS} implies that
 $\psi(L(\gamma)) = \psi \big( (1/2) (\arccosh ( \zettaisq{\gamma}/2))^2 \big)$. 
Hence \eqref{eq:psisigmagamma} implies that the action $\psi(\sigma(L(\gamma))(r_1)) = \sigma(\psi(L(\gamma))(\psi(r_1))$ 
is described by using the derivative 
\[
\chi(x) := \frac{d}{dx} \left(
\frac{1}{2} \left(\arccosh \left( \frac{x}{2} \right) \right)^2 \right)
= \frac{x}{4} \sum_{i=0}^{\infty}
\frac{i!i!}{(2i+1)!} \left( 4-x^2 \right)^{i}.
\]
More explicitly, we obtain
\begin{align*}
& \psi \big( \sigma(L(\gamma))(r_1) \big) \\
=\  & \psi( \chi(\zettaisq{\gamma}))
\psi \big( (r_5-r_5^{-1}) r_1
-\zettaisq{r_4} \otimes r_1(r_3-r_3^{-1}) \big) \\
=\ & \psi \big(
(\chi(\zettaisq{\gamma}) \otimes (r_5 - r_5^{-1}) ) \cdot r_1 - r_1 \cdot (\chi(\zettaisq{\gamma}) \zettaisq{r_4} \otimes (r_3 - r_3^{-1}) ) \big).
\end{align*}
The elements $\psi(\zettaisq{\gamma})$,  $\psi(\zettaisq{r_4})$, 
$\psi(r_3)$ and $\psi(r_5)$ are annihilated by $\sigma( \psi(L(\gamma)))$.
Thus we can compute the action of the exponential of $\sigma(\psi(L(\gamma)))$ on $\psi(r_1)$ 
and conclude that 
\begin{align*}
&\psi( t_{\gamma} (r_1))
= \psi \big( \exp( \sigma( L(\gamma))) (r_1) \big)
= \exp  \big(  \sigma( \psi(L(\gamma)))  \big) (\psi(r_1)) \\
&  =
\psi \left(
\exp ( \chi(\zettaisq{\gamma}) \otimes (r_5 - r_5^{-1}) )\cdot r_1 \cdot
\exp( - \chi(\zettaisq{\gamma}) \zettaisq{r_4} \otimes (r_3 - r_3^{-1}) ) \right). 
\end{align*}

\subsection{Invariants of integral homology $3$-spheres}

Using the formulas for the action of Dehn twists on skein modules  
(Theorems \ref{thm_main_Dehn_skein} and \ref{thm_main_Dehn_tskein}),
we can construct invariants of integral homology $3$-spheres.

We first explain constructions of embeddings of the \emph{Torelli group} $\calI := \calI(\Sigma)$ of $\Sigma$ 
into the completions of the skein algebras $\skein(\Sigma)$ and $\tskein(\Sigma)$.
For this, we assume that the genus $g$ of $\Sigma$ is at least three. 
Recall that $\calI$ is the kernel of the action of the mapping class group $\calM$ on $H_1(\Sigma;\Z)$.
Based on earlier results by Birman \cite{Birman1971} and Powell \cite{Powell1978}, Johnson \cite{Johnson79} proved that the group $\calI$ is generated by elements of the form $t_{C_1} {t_{C_2}}^{-1}$, where $C_1$ and $C_2$ are disjoint non-separating simple closed curves in $\Sigma$ such that $C_1 \cup C_2$ bounds a subsurface of $\Sigma$.
Such a couple $(C_1,C_2)$ is called a \emph{bounding pair}.

The Lie brackets on $\skein(\Sigma)$ and $\tskein(\Sigma)$ 
extend to the completions $\cskein(\Sigma)$ and $\ctskein(\Sigma)$, respectively.
Since these Lie brackets are of degree $(-2)$,  
the third terms of the filtrations $\filt{3} \cskein (\Sigma)$ and $\filt{3} \ctskein (\Sigma)$ are pronilpotent Lie algebras. 
Hence, by using the Baker--Campbell--Hausdorff series, we can regard them as pronilpotent groups.

\begin{thm}[{\cite[Theorem 3.13 \&  Corollary 3.15]{TsujiTorelli}}] \label{thm:Torelli_S}
Assigning the skein element $\zeta_{\skein} (t_{C_1} {t_{C_2}}^{-1})
:=L_{\skein} (C_1)- L_{\skein} (C_2)$ to any bounding pair $(C_1,C_2)$
defines an injective group homomorphism
\[
\zeta_{\skein} \colon \calI \longrightarrow \filt{3} \cskein (\Sigma).
\]
Furthermore, for any $\xi \in \torelli$,
we have
\[
\xi =
\exp \big(\sigma( \zeta_{\skein}(\xi))\big) \colon
\cskein (\Sigma, \bullet,*) \longrightarrow
\cskein (\Sigma,\bullet,*).
\]
\end{thm}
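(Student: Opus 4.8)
The plan is to reduce everything to the bounding pair generators of $\torelli$ and then to assemble the general case by the Baker--Campbell--Hausdorff formula inside the pronilpotent Lie algebra $\filt{3}\cskein(\Sigma)$. By Johnson's theorem \cite{Johnson79}, $\torelli$ is generated by the maps $t_{C_1}t_{C_2}^{-1}$ attached to bounding pairs $(C_1,C_2)$, so it suffices to understand $\zeta_{\skein}$ and the exponential formula on such generators and then to control their products.

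First I would treat a single bounding pair $(C_1,C_2)$. The decisive point is that $C_1$ and $C_2$ are \emph{disjoint}: choosing disjoint tubular neighborhoods, the skein elements $L_{\skein}(C_1)$ and $L_{\skein}(C_2)$ are supported in disjoint thickened subsurfaces, hence commute in the skein algebra, so that $[L_{\skein}(C_1),L_{\skein}(C_2)]=0$ for the Lie bracket \eqref{eq:[-,-]}. Since $\sigma$ is a Lie algebra homomorphism, the derivations $\sigma(L_{\skein}(C_1))$ and $\sigma(L_{\skein}(C_2))$ of $\cskein(\Sigma,\bullet,*)$ commute as well. Combining this with Theorem \ref{thm_main_Dehn_skein} gives
\[
t_{C_1}t_{C_2}^{-1}= \exp\!\big(\sigma(L_{\skein}(C_1))\big)\,\exp\!\big(-\sigma(L_{\skein}(C_2))\big)= \exp\!\big(\sigma(L_{\skein}(C_1)-L_{\skein}(C_2))\big),
\]
which is exactly the asserted formula on generators. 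To see that $\zeta_{\skein}(t_{C_1}t_{C_2}^{-1})=L_{\skein}(C_1)-L_{\skein}(C_2)$ lands in $\filt{3}\cskein(\Sigma)$, I would note that the degree-two leading part of $L_{\skein}(C)$ depends only on the homology class $[C]\in H_1(\Sigma;\Z)$: this can be read off from the comparison map $\varpi$ together with Proposition \ref{prop_gen_Dehn_twist_GS}, since $\varpi(L_{\skein}(C))=\psi(L(\gamma))$ and the leading term of $L(\gamma)=\tfrac12(\log\gamma)^2$ is $\tfrac12(\gamma-1)^2$. As $C_1$ and $C_2$ are homologous, these leading parts cancel and the difference drops into $\filt{3}$.

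Next I would extend $\zeta_{\skein}$ to all of $\torelli$. Because the Lie bracket of $\cskein(\Sigma)$ is of degree $(-2)$, the subspace $\filt{3}\cskein(\Sigma)$ is a pronilpotent Lie algebra, which we regard as a group via $\bch$; and since $\sigma$ is a Lie homomorphism with filtration-raising image on $\filt{3}$, the assignment $Z\mapsto\exp(\sigma(Z))$ is a group homomorphism from $(\filt{3}\cskein(\Sigma),\bch)$ to $\Aut(\cskein(\Sigma,\bullet,*))$. Writing a given $\xi\in\torelli$ as a product of bounding pair generators and applying the generator formula, $\xi$ acts as $\exp(\sigma(Z))$ with $Z$ the $\bch$-product of the corresponding differences $L_{\skein}(C_1)-L_{\skein}(C_2)$, and I would set $\zeta_{\skein}(\xi):=Z$. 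The homomorphism property $\zeta_{\skein}(\xi\xi')=\bch(\zeta_{\skein}(\xi),\zeta_{\skein}(\xi'))$ is then formal. What genuinely needs to be checked is that $Z$ is independent of the chosen factorization; this I would do by induction on the filtration degree, identifying the leading graded piece of $\zeta_{\skein}$ at each level with a skein-theoretic incarnation of the Johnson homomorphism (via the comparison diagram \eqref{eq:comparison_abs} and the diagrammatic picture of Section \ref{sec:diagram}), which is itself a well-defined homomorphism.

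The main obstacle is injectivity of $\zeta_{\skein}$, and it is tied to the same well-definedness issue, because $\sigma$ need not be injective on $\filt{3}$ (its kernel contains peripheral and scalar contributions, cf. Remark \ref{rem:second_term}). The identity $\xi=\exp(\sigma(\zeta_{\skein}(\xi)))$ only yields $\zeta_{\skein}(\xi)\in\ker\sigma$ when $\xi$ acts trivially, so triviality of the action alone does not force $\zeta_{\skein}(\xi)=0$. I would instead argue that the successive graded pieces of $\zeta_{\skein}$ recover the Johnson homomorphisms $\jhom{k}$: if $\xi\neq 1$, let $k$ be the largest integer with $\xi$ in the $k$-th term of the Johnson filtration (such $k$ exists because that filtration has trivial intersection, exactly as for $\widehat{\calM}$ in Section \ref{sec:diagram}); then the leading graded part of $\zeta_{\skein}(\xi)$ corresponds, under the comparison isomorphisms, to the nonzero value $\jhom{k}(\xi)$, whence $\zeta_{\skein}(\xi)\neq 0$. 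Establishing this identification of graded pieces with the Johnson homomorphism --- and hence the underlying faithfulness of the skein action --- is the technical heart of the argument and the step I expect to be hardest.
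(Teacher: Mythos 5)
Your computation on a single bounding pair is right and is indeed the starting point of the real argument: disjointness of $C_1$ and $C_2$ makes $L_{\skein}(C_1)$ and $L_{\skein}(C_2)$ commute, so Theorem \ref{thm_main_Dehn_skein} gives $t_{C_1}t_{C_2}^{-1}=\exp\big(\sigma(L_{\skein}(C_1)-L_{\skein}(C_2))\big)$ on the generators. But note that this survey does not prove the theorem at all: it quotes it from \cite{TsujiTorelli} and records only that the proof rests on Putman's \emph{infinite presentation} of the Torelli group \cite{Pu2008}. That one remark points at exactly the step your proposal does not actually carry out, namely the well-definedness of $\zeta_{\skein}$: given two factorizations of the same $\xi\in\torelli$ into bounding-pair maps, one must show that the two $\bch$-products in $\filt{3}\cskein(\Sigma)$ agree. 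The standard (and here the actual) way to do this is to check that the relators of a presentation of $\torelli$ are sent to $0$; since $\torelli$ has no known useful finite presentation, one uses Putman's infinite one, and verifying its families of relations in the completed skein algebra is the technical core of \cite{TsujiTorelli}.

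Your substitute for this step does not close. You propose an induction identifying ``the leading graded piece of $\zeta_{\skein}$'' with the Johnson homomorphism $\jhom{k}$, but the graded quotients $\filt{k}\cskein(\Sigma)/\filt{k+1}\cskein(\Sigma)$ are much larger than the targets of the Johnson homomorphisms (the skein algebra sees the full $\operatorname{SL}_2$-character variety and the $(A+1)$-adic direction, not just the tree-level/nilpotent information); two candidate values of $\zeta_{\skein}(\xi)$ could therefore differ by an element invisible to every $\jhom{k}$, so agreement of Johnson homomorphisms at each level does not force equality in $\filt{3}\cskein(\Sigma)$. Moreover one cannot even speak of ``the'' graded piece of $\zeta_{\skein}$ before well-definedness is known, so the argument is circular as stated. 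The same unproven identification carries your injectivity claim. A smaller but real issue: your argument that $L_{\skein}(C_1)-L_{\skein}(C_2)\in\filt{3}\cskein(\Sigma)$ goes through $\varpi$, which only controls the reduction modulo $(A+1)$; the filtration $\{\filt{m}\cskein(\Sigma)\}$ also records powers of $(A+1)$, so cancellation in $\hskein(\Sigma)$ does not by itself give membership in $\filt{3}\cskein(\Sigma)$ --- this requires Tsuji's explicit description of $\filt{2}/\filt{3}$ (where the class of $K+2$ depends only on $[K]\in H_1(\Sigma;\Z)$).
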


\begin{thm}[{\cite[Theorem 7.13 \& Corollary 7.14]{TsujiHOMFLY-PTskein}}] \label{thm:Torelli_A}
Assigning the skein element $\zeta_{\tskein} (t_{C_1} {t_{C_2}}^{-1})
:=L_{\tskein} (C_1)- L_{\tskein} (C_2)$ to any bounding pair $(C_1,C_2)$
defines an injective group homomorphism
\[
\zeta_{\tskein} \colon \calI \longrightarrow \filt{3} \ctskein (\Sigma).
\]
Furthermore, for any $\xi \in \torelli$, we have
\[
\xi =
\exp \big(\sigma( \zeta_{\tskein}(\xi))\big) \colon
\ctskein (\Sigma, \bullet,*) \longrightarrow
\ctskein (\Sigma,\bullet,*).
\]
\end{thm}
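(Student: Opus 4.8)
\emph{About the proof.} The plan is to transport to the HOMFLY-PT setting the same architecture that proves the Kauffman bracket statement (Theorem \ref{thm:Torelli_S}), the essential input being Theorem \ref{thm_main_Dehn_tskein}, which writes each Dehn twist along a simple closed curve $C$ as $t_C=\exp(\sigma(L_{\tskein}(C)))$ on $\ctskein(\Sigma,\bullet,*)$. First I would treat a single bounding pair map $h=t_{C_1}\,t_{C_2}^{-1}$. Since $C_1$ and $C_2$ are disjoint, the skein elements $L_{\tskein}(C_1)$ and $L_{\tskein}(C_2)$ are represented by series of links supported in disjoint regular neighborhoods, so they commute for the stacking product and hence $[L_{\tskein}(C_1),L_{\tskein}(C_2)]=0$ in $\ctskein(\Sigma)$. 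Because $\sigma$ is an action by derivations, i.e$.$ a Lie algebra homomorphism $\ctskein(\Sigma)\to\Der(\ctskein(\Sigma,\bullet,*))$, the derivations $\sigma(L_{\tskein}(C_1))$ and $\sigma(L_{\tskein}(C_2))$ also commute, whence $h=\exp(\sigma(L_{\tskein}(C_1)))\exp(-\sigma(L_{\tskein}(C_2)))=\exp(\sigma(L_{\tskein}(C_1)-L_{\tskein}(C_2)))$. This already yields the ``Furthermore'' clause on generators, with $\zeta_{\tskein}(h)=L_{\tskein}(C_1)-L_{\tskein}(C_2)$.

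Next I would check that $\zeta_{\tskein}(h)$ lies in $\filt{3}\ctskein(\Sigma)$. Each $L_{\tskein}(C_i)$ lies in $\filt{2}$, and, as the skein analogue of the degree computation behind Proposition \ref{prop:one_two}, its leading symbol in $\filt{2}/\filt{3}$ depends only on the homology class $[C_i]\in H_1(\Sigma)$; this is visible after applying the comparison map $\varpi$, which sends $L_{\tskein}(C_i)$ to $L(\gamma_i)$ whose degree-$2$ part is determined by $[C_i]$. Since $(C_1,C_2)$ bounds a subsurface, $[C_1]=[C_2]$, so the degree-$2$ parts cancel and the difference sits in $\filt{3}$. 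As the Lie bracket of $\ctskein(\Sigma)$ has degree $(-2)$, the Lie algebra $\filt{3}\ctskein(\Sigma)$ is pronilpotent and, through the Baker--Campbell--Hausdorff series, becomes a pronilpotent group $(\filt{3}\ctskein(\Sigma),\bch)$; and since $\sigma$ is a Lie homomorphism, $\exp\circ\,\sigma$ restricts to a group homomorphism $(\filt{3}\ctskein(\Sigma),\bch)\to\Aut(\ctskein(\Sigma,\bullet,*))$.

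With these pieces I would assemble the global map. The mapping class group action provides a homomorphism $\Phi\colon\calI\to\Aut(\ctskein(\Sigma,\bullet,*))$, and by Johnson's theorem \cite{Johnson79} the group $\calI$ is generated by bounding pair maps; the first step then shows $\Phi(\calI)$ is contained in the image of $\exp\circ\,\sigma$ on $\filt{3}\ctskein(\Sigma)$. Setting $\zeta_{\tskein}:=(\exp\circ\,\sigma)^{-1}\circ\Phi$ produces a group homomorphism that agrees with $L_{\tskein}(C_1)-L_{\tskein}(C_2)$ on generators, and the identity $\xi=\exp(\sigma(\zeta_{\tskein}(\xi)))$ holds by construction. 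Injectivity of $\zeta_{\tskein}$ reduces to injectivity of $\Phi$ on $\calI$: using the comparison map $\varpi$ of \eqref{eq:comparison_rel} together with Remark \ref{rem:relation1}, the action $\Phi(\xi)$ is intertwined with the generalized Dehn twist action on $\widehat{\Q\pi}$, which for $\xi\in\calI\subset\calM$ is the genuine action on $\pi\subset\widehat{\Q\pi}$; the faithfulness of the Dehn--Nielsen representation $\calM\cong\Aut_\partial(\pi)$ then forces $\xi=1$.

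The main obstacle is making $(\exp\circ\,\sigma)^{-1}$ meaningful, i.e$.$ proving that $\sigma$ is injective on $\filt{3}\ctskein(\Sigma)$, so that $\zeta_{\tskein}$ is well defined as a skein element and not merely through its action. This requires controlling the kernel of $\sigma$ (the central part of the completed HOMFLY-PT skein algebra) and is precisely the step where the computation departs from the group-ring picture: it must be extracted from the explicit generators of the filtration $\{\filt{n}\ctskein(\Sigma)\}_n$ rather than formally. Everything else is formal once Theorem \ref{thm_main_Dehn_tskein}, the disjointness-commutativity, and Johnson's generation are in hand.
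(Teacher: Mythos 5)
Your reduction has a genuine gap at its central step, and it is worse than a missing lemma: the map $\exp\circ\,\sigma$ is \emph{not} injective on $\filt{3}\ctskein(\Sigma)$, so $\zeta_{\tskein}:=(\exp\circ\,\sigma)^{-1}\circ\Phi$ is not well defined. Nonzero scalar multiples of the empty link (e.g$.$ elements of $h^2\,\Q[\rho][[h]]$, such as the $-\tfrac{1}{3}\rho^3h^2$ term built into $L_{\tskein}(r)$ in \eqref{eq:LAr}) lie in the kernel of the Lie action $\sigma$ while still sitting deep in the filtration; Remark \ref{rem:second_term} makes exactly this point in the Kauffman bracket case. Consequently the skein element $\zeta_{\tskein}(\xi)$ is \emph{not} determined by the automorphism $\Phi(\xi)$ of $\ctskein(\Sigma,\bullet,*)$ --- two candidate logarithms can differ by a central element of $\filt{3}$ acting trivially --- and these central correction terms are precisely what make $\zeta_{\tskein}$ a group homomorphism rather than merely a map of sets. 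So the well-definedness of $\zeta_{\tskein}$ as a $\filt{3}\ctskein(\Sigma)$-valued homomorphism cannot be outsourced to the faithfulness of the action; it must be checked at the level of skein elements.

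This is where your route and the actual proof part ways. The proof of \cite[Theorem 7.13]{TsujiHOMFLY-PTskein} (like that of Theorem \ref{thm:Torelli_S}) defines $\zeta_{\tskein}$ on the bounding-pair generators by the stated formula and then verifies, using Putman's infinite presentation of $\torelli$ \cite{Pu2008}, that every relation among bounding-pair maps is sent to $0$ under the Baker--Campbell--Hausdorff group law on $\filt{3}\ctskein(\Sigma)$; the ``furthermore'' identity and injectivity then follow along the lines you sketch (your commuting-derivations computation for a single bounding pair, the degree count placing $L_{\tskein}(C_1)-L_{\tskein}(C_2)$ in $\filt{3}$, and the reduction of injectivity to the Dehn--Nielsen theorem via $\varpi$ are all sound and consistent with the paper). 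To repair your argument you would have to replace the appeal to $(\exp\circ\,\sigma)^{-1}$ by this explicit verification of Putman's relations, which is the genuinely non-formal content of the theorem.
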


The proofs of the above two theorems use the infinite presentation of the Torelli group by Putman \cite{Pu2008}.

\begin{rem} \label{rem:BSCC}
For any separating simple closed curve $C$ in $\Sigma$, we have
\begin{equation} \label{eq:zetaBSCC}
\zeta_{\skein}(t_C) = L_{\skein}(C)
\quad \text{and} \quad
\zeta_{\tskein}(t_C) = L_{\tskein}(C).
\end{equation}
If the genus $g$ is $1$ or $2$, twists along separating simple closed curves are also needed 
among the generators of $\torelli$ and one can still define the homomorphisms $\zeta_{\skein}$ and $\zeta_{\tskein}$ 
taking \eqref{eq:zetaBSCC} as definition on those generators.
\end{rem}

We now turn our attention to oriented integral homology $3$-spheres. 
We fix a Heegaard splitting of $S^3=H_g^+ \cup_\iota H_g^-$,
where $H_g^+$ and $H_g^-$ are handlebodies of genus $g$
and $\iota$ is an orientation-reversing diffeomorphism
from $\partial H_g^+$ to~$\partial H_g^-$.
We regard the surface $\Sigma$ as being obtained by deleting the interior of a closed disk from $\partial H_g^+$.
Given an element $\xi \in \calI$, the $3$-manifold
\[
M_{\xi} := H_g^+ \cup_{\iota \circ \xi} H_g^-
\]
is an oriented integral homology $3$-sphere.
Conversely, any oriented integral homology $3$-sphere arises in this way for some $g$, see \cite[\S 2]{Mo89}.
Let $\mathcal{H}$ be the set of diffeomorphism classes of oriented integral homology $3$-spheres, 
which forms a commutative monoid under the connected sum.

Let $e\colon \Sigma \times [-1,1] \to S^3$ be a tubular neighborhood of $\Sigma$.
Since the skein algebras $\skein(S^3)$ and $\tskein(S^3)$ are isomorphic to their respective ground rings, 
$e$ induces a $\Q [\rho][[h]]$-linear map 
$e_* \colon \ctskein (\Sigma) \to \Q [\rho][[h]]$
and a $\Q [[A+1]]$-linear map $e_*\colon \cskein (\Sigma)\to \Q [[A+1]]$.

\begin{thm}[{\cite[Theorem 1.1]{Tsujihom3}}]
There is an invariant of oriented  integral homology $3$-spheres $z_{\skein}\colon \mathcal{H} \to \Q [[A+1]]$
 which is defined by 
\[
z_{\skein}( M_{\xi}) :=
\sum_{i=0}^\infty \frac{1}{i! (-A+A^{-1})^i}
e_* \big((\zeta_{\skein} (\xi))^i\big) \ \in \Q[[A+1]]
\]
for any element $\xi \in \torelli$.
\end{thm}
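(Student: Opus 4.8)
The plan is to read the displayed series as the evaluation by $e_*$ of a single group-like skein element, and then to prove that this element descends to an invariant of $M_\xi$. First I would record the identity
\[
z_{\skein}(M_\xi) = e_*\big(E(\xi)\big), \qquad
E(\xi):= \sum_{i=0}^\infty \frac{1}{i!\,(-A+A^{-1})^i}\,\zeta_{\skein}(\xi)^i ,
\]
and observe that, since the renormalised bracket $[x,y]=(xy-yx)/(-A+A^{-1})$ on $\cskein(\Sigma)$ is exactly the degree $(-2)$ Lie bracket, the assignment $z\mapsto \exp\big(z/(-A+A^{-1})\big)$ carries the Baker--Campbell--Hausdorff product on $\filt{3}\cskein(\Sigma)$ to the ordinary algebra product. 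Combined with the fact that $\zeta_{\skein}\colon \torelli \to \filt{3}\cskein(\Sigma)$ is a group homomorphism (Theorem~\ref{thm:Torelli_S}), this makes $\xi\mapsto E(\xi)$ multiplicative, i.e$.$ $E(\xi\xi')=E(\xi)\,E(\xi')$. For convergence, I would use that $\zeta_{\skein}(\xi)\in\filt{3}\cskein(\Sigma)$ and that stacking is filtration-preserving, so $\zeta_{\skein}(\xi)^i\in\filt{3i}\cskein(\Sigma)$; the analytic input is a valuation estimate showing that $e_*$ raises the $(A+1)$-adic order at a rate proportional to the filtration degree, so that $\tfrac{1}{(-A+A^{-1})^i}\,e_*\big(\zeta_{\skein}(\xi)^i\big)$ has $(A+1)$-adic order tending to $\infty$ and the series converges in $\Q[[A+1]]$.

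The remaining content is that $e_*(E(\xi))$ depends only on the diffeomorphism type of $M_\xi$, not on the chosen presentation. Every $M\in\mathcal{H}$ arises as $M_\xi$ for some genus $g$ and some $\xi\in\torelli$ \cite[\S 2]{Mo89}, but this is far from unique. By a Reidemeister--Singer-type analysis of the Heegaard gluing, $M_\xi\cong M_{\xi'}$ precisely when $\xi,\xi'$ are related by stabilisation together with left/right multiplication by elements of the handlebody subgroups $\mathcal{A}_g,\mathcal{B}_g\subset\calM$ of classes extending over $H_g^+$, respectively $H_g^-$ (taken inside $\torelli$, which is legitimate since the relevant generators are bounding-pair maps whose two curves bound disks in a handlebody, and such maps lie in $\torelli$). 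Thus it suffices to verify that $z_{\skein}$ is unchanged under (a) $\xi\mapsto b\,\xi\,a$ with $a\in\mathcal{A}_g\cap\torelli$ and $b\in\mathcal{B}_g\cap\torelli$, and under (b) the stabilisation $\Sigma_g\hookrightarrow\Sigma_{g+1}$.

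For (a), multiplicativity of $E$ reduces the claim to the absorption property $e_*\big(E(b)\,X\,E(a)\big)=e_*(X)$. Here I would use that the closure $e_*$ factors through the skein modules of $H_g^+$ and $H_g^-$ (a collar of $\partial H_g^+$ can be pushed into $H_g^+$), together with the fact that a twist extending over a handlebody fixes the corresponding ``vacuum''. Concretely, for a curve $C$ bounding a disk in $H_g^+$ the element $L_{\skein}(C)$ dies in $\skein(H_g^+)$, so $\zeta_{\skein}(a)$ becomes trivial after pushing into $H_g^+$; via the identification $\xi=\exp(\sigma(\zeta_{\skein}(\xi)))$ of the second part of Theorem~\ref{thm:Torelli_S}, the factor $E(a)$ then acts as the identity on the $H_g^+$-side, and symmetrically for $E(b)$ on the $H_g^-$-side. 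For (b), I would invoke functoriality of the skein modules under the stabilisation embedding (Remark~\ref{rem:skein_functorial}) and the compatibility of $\zeta_{\skein}$ and $e_*$ with it, the extra handle being invisible to the closure in $S^3$.

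I expect the main obstacle to be precisely this geometric input underlying (a) and (b): establishing that handlebody twists and the stabilisation handle are annihilated by $e_*$. This is where the topology of the Heegaard splitting — rather than the purely formal algebra of Theorem~\ref{thm:Torelli_S} — must be brought to bear, and it is the heart of the argument in \cite{Tsujihom3}; the filtration/valuation estimate controlling convergence in the first paragraph is the other genuinely technical point, but it is more routine once the precise rate at which $e_*$ raises the $(A+1)$-adic order is pinned down.
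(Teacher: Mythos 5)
This survey does not itself prove the theorem --- it is quoted from \cite{Tsujihom3} --- but your architecture does coincide with the one used there: multiplicativity of $\xi\mapsto E(\xi)$ via Baker--Campbell--Hausdorff together with Theorem \ref{thm:Torelli_S}, convergence from the interplay between the skein filtration and the $(A+1)$-adic valuation of $e_*$, and well-definedness via the Reidemeister--Singer/Morita description of when two Torelli elements present the same homology sphere, reduced to an absorption property for the two handlebody sides of the Heegaard splitting. You also correctly locate the two genuinely hard points. However, your justifications at precisely those points are not correct as stated. First, the criterion ``$M_\xi\cong M_{\xi'}$ iff $\xi,\xi'$ are related by stabilisation and left/right multiplication by elements of $\mathcal{A}_g\cap\torelli$ and $\mathcal{B}_g\cap\torelli$'' is a theorem in its own right (see \cite[\S 2]{Mo89} and its refinement by Pitsch), not a consequence of Reidemeister--Singer plus your parenthetical remark about bounding-pair generators: Reidemeister--Singer only yields $\xi'=b\xi a$ with $a\in\mathcal{A}_g$ and $b\in\mathcal{B}_g$ arbitrary mapping classes after stabilisation, and showing that $a,b$ may be chosen inside $\torelli$ when $\xi,\xi'\in\torelli$ is a separate homological argument.

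Second, and more seriously, your verification of the absorption property only covers elements of $\mathcal{A}_g\cap\torelli$ built from bounding-pair maps whose curves bound disks (or cobound annuli) in $H_g^+$. What is actually needed is that $\zeta_{\skein}(a)$ is killed by the map $\cskein(\Sigma)\to\cskein(H_g^+)$ induced by pushing the collar into the handlebody --- equivalently that $E(a)$ becomes the class of the empty link --- for \emph{every} $a\in\mathcal{A}_g\cap\torelli$. The handlebody Torelli group is not known to be generated by the special elements you treat, so this does not follow from your computation; it is the main technical lemma of \cite{Tsujihom3} and requires its own proof. A smaller but real point: since $-A+A^{-1}=-2(A+1)+O\big((A+1)^2\big)$ is not a unit of $\Q[[A+1]]$, the element $E(\xi)=\exp\big(\zeta_{\skein}(\xi)/(-A+A^{-1})\big)$ does not live in $\cskein(\Sigma)$, so the BCH multiplicativity must be run coefficientwise, or after applying $e_*$ together with the divisibility estimate you allude to; that estimate (showing that $e_*\big(\zeta_{\skein}(\xi)^i\big)$ is divisible by $(A+1)^{i+d_i}$ with $d_i\to\infty$, using $\zeta_{\skein}(\xi)^i\in\filt{3i}\cskein(\Sigma)$) has to be proved, and it is the same estimate that makes the series converge. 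With these three items supplied, your outline matches the proof of \cite{Tsujihom3}.
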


\begin{thm}[{\cite[Theorem 9.1]{TsujiHOMFLY-PTskein}}]
There is an invariant of oriented  integral homology $3$-spheres $z_{\tskein}\colon \mathcal{H} \to \Q[\rho][[h]]$
 which is defined by 
\[
z_{\tskein}( M_{\xi}) :=
\sum_{i=0}^\infty \frac{1}{i! h^i}
e_* \big((\zeta_{\tskein} (\xi))^i\big) \ \in \Q[\rho][[h]]
\]
for any element $\xi \in \torelli$.
\end{thm}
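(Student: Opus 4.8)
The plan is to package the defining sum into a single group-like skein element and then to reduce the required invariance to a vanishing statement about handlebody elements. First I would introduce, working in the ring obtained from $\Q[\rho][[h]]$ by inverting $h$, the element
\[
Z(\xi) := \exp\big(\zeta_{\tskein}(\xi)/h\big) = \sum_{i\ge 0}\frac{\zeta_{\tskein}(\xi)^i}{i!\,h^i},
\]
so that by definition $z_{\tskein}(M_\xi) = e_*\big(Z(\xi)\big)$, where $e_*\colon \ctskein(\Sigma)\to\Q[\rho][[h]]$ is the algebra homomorphism induced by the tubular neighborhood $e$. Two points must be checked here. The sum lands in $\Q[\rho][[h]]$, with no genuine $h$-denominators surviving: since $\zeta_{\tskein}(\xi)\in F^3\ctskein(\Sigma)$ by Theorem~\ref{thm:Torelli_A} and stacking is filtration-preserving, $\zeta_{\tskein}(\xi)^i\in F^{3i}$, and the HOMFLY-PT analogue of the filtration estimate used in \cite{Tsujihom3} gives $e_*\big(F^{2j}\ctskein(\Sigma)\big)\subset h^{j}\Q[\rho][[h]]$; thus each term $h^{-i}e_*(\zeta_{\tskein}(\xi)^i)$ is a power series and the $h$-adic valuations tend to infinity, so the sum converges. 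More importantly, $Z$ is multiplicative: because the renormalized Lie bracket on $F^3$ is $[x,y]=(xy-yx)/h$, a direct computation gives $\exp(x/h)\exp(y/h)=\exp\big(\bch(x,y)/h\big)$, where $\bch$ is the Baker--Campbell--Hausdorff series of the pronilpotent Lie algebra $\big(F^3\ctskein(\Sigma),[-,-]\big)$; combined with the fact (Theorem~\ref{thm:Torelli_A}) that $\zeta_{\tskein}\colon \torelli\to F^3\ctskein(\Sigma)$ is a homomorphism for the $\bch$-group structure, this yields $Z(\xi\eta)=Z(\xi)Z(\eta)$. Hence $Z\colon\torelli\to\ctskein(\Sigma)^{\times}$ is a group homomorphism and $z_{\tskein}=e_*\circ Z$.

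Next I would invoke Morita's description of $\mathcal{H}$ as a quotient of the Torelli groups. Recall (see \cite[\S 2]{Mo89}, and the parallel discussion in \cite{Tsujihom3}) that every oriented integral homology $3$-sphere is some $M_\xi$, and that $M_\xi$ and $M_{\xi'}$ are orientation-preservingly diffeomorphic if and only if $\xi$ and $\xi'$ are related by finitely many moves of two types: (i) stabilization $\torelli(\Sigma_g)\hookrightarrow\torelli(\Sigma_{g+1})$ by a standardly glued handle, and (ii) replacement $\xi\rightsquigarrow j_-\,\xi\,j_+$, where $j_+$ (resp.\ $j_-$) is a Torelli element that extends to a diffeomorphism of $H_g^+$ (resp.\ $H_g^-$). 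Since $e_*$ is an algebra homomorphism into the commutative ring $\Q[\rho][[h]]$ and $Z$ is multiplicative, move (ii) gives
\[
z_{\tskein}(M_{j_-\xi j_+}) = e_*\big(Z(j_-)\big)\cdot e_*\big(Z(\xi)\big)\cdot e_*\big(Z(j_+)\big),
\]
so invariance under (ii) reduces to the single identity $e_*\big(Z(j_\pm)\big)=1$ for $j_\pm$ in the handlebody Torelli subgroups; taking $\xi=\mathrm{id}$ (so $\zeta_{\tskein}(\mathrm{id})=0$ and $Z(\mathrm{id})=1$) also fixes the normalization $z_{\tskein}(S^3)=1$. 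Invariance under stabilization (i) is comparatively routine: it follows from the functoriality of the HOMFLY-PT skein algebra under the inclusion $\Sigma_g\times[-1,+1]\hookrightarrow\Sigma_{g+1}\times[-1,+1]$ (Remark~\ref{rem:skein_functorial} and property (v) of the filtrations), together with the compatibility of $\zeta_{\tskein}$ and of $e_*$ with this inclusion, the extra handle being glued trivially inside $S^3$.

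The hard part will be the handlebody vanishing $e_*\big(Z(j_\pm)\big)=1$. It is tempting to argue that $M_{j_\pm}\cong S^3$ and conclude, but this is exactly the statement to be proved, so the identity must instead be established by a direct skein-theoretic computation. I would write $j_+$ as a product of bounding-pair maps $t_{C_1}t_{C_2}^{-1}$ (and, for low genus, of twists along separating curves), with $C_1,C_2$ chosen to bound in $H_g^+$, so that $\zeta_{\tskein}(j_+)$ is built from the elements $L_{\tskein}(C_i)$ by Theorem~\ref{thm:Torelli_A} and Remark~\ref{rem:BSCC}. The geometric input is then that, after pushing $\Sigma$ into a collar inside the handlebody $H_g^+\subset S^3$ through $e$, the framed knots underlying these $L_{\tskein}(C_i)$ can be made isotopic (through handle slides over the attaching disks of $H_g^+$), so that the contributions of $Z(j_+)$ telescope to $1$ under $e_*$; the $H_g^-$ case is symmetric using the collar on the other side. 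This is the step that genuinely uses three-dimensional topology and is the crux of the argument; I expect it to run in close analogy with the corresponding computation for the Kauffman bracket invariant $z_{\skein}$ in \cite{Tsujihom3}, with $L_{\skein}$ replaced by $L_{\tskein}$ and $-A+A^{-1}$ by $h$ throughout.
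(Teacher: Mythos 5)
This survey only cites \cite[Theorem 9.1]{TsujiHOMFLY-PTskein} and contains no proof, so I am judging your proposal on its own terms. Your overall architecture --- packaging the sum as $e_*\big(\exp(\zeta_{\tskein}(\xi)/h)\big)$, getting multiplicativity of $\xi\mapsto Z(\xi)$ from the $\bch$-group structure on $F^3\ctskein(\Sigma)$ together with Theorem \ref{thm:Torelli_A}, and reducing invariance via Morita's stabilization/double-coset criterion to the handlebody Torelli subgroups --- is the right one, and your convergence estimate is plausible. But there is a genuine gap at the pivot of the argument: \emph{$e_*\colon \ctskein(\Sigma)\to\Q[\rho][[h]]$ is not an algebra homomorphism}, so the factorization $e_*\big(Z(j_-)Z(\xi)Z(j_+)\big)=e_*(Z(j_-))\,e_*(Z(\xi))\,e_*(Z(j_+))$ fails. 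The product in $\tskein(\Sigma)$ is the stacking product, and the $S^3$-evaluation of a stacked link is not the product of the evaluations of its factors: if $\alpha,\beta$ are simple closed curves on the Heegaard surface meeting in one point, each maps to an unknot under $e$, but the stacked product $\alpha\beta$ maps to a Hopf link. Consequently the identity $e_*(Z(j_\pm))=1$, which you isolate as the crux, is too weak to yield invariance under $\xi\rightsquigarrow j_-\xi j_+$.

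The correct reduction uses that $e_*$ factors through the two handlebodies rather than through the ring structure: $\tskein(S^3)$ is obtained by capping $\tskein(\Sigma)$ with the empty-link ``vacuum'' vectors of $\tskein(H_g^+)$ and $\tskein(H_g^-)$, so that $e_*(a\,z\,b)$ is computed by pushing $a$ into $H_g^-$ and $b$ into $H_g^+$ and letting them act on those vectors. What must therefore be proved is the stronger statement that $Z(j_+)$ maps to $1$ in (the completion of) $\tskein(H_g^+)$ --- equivalently $e_*\big(x\,Z(j_+)\big)=e_*(x)$ for \emph{all} $x$ --- and symmetrically for $j_-$. Your proposed mechanism (the curves of a bounding pair extending over $H_g^+$ become framed-isotopic inside $H_g^+$, so $\zeta_{\tskein}(j_+)$ dies in $\ctskein(H_g^+)$) is in fact the right kind of input for this stronger statement, but as written you only aim it at $e_*(Z(j_\pm))=1$, and you would still need to verify that the handlebody Torelli groups are generated by such ``annulus'' bounding-pair maps (plus separating twists bounding in the handlebody in low genus). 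Repair the reduction along these lines and the skeleton of your argument should match the intended proof.
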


We refer to the papers \cite{Tsujihom3} and \cite{TsujiHOMFLY-PTskein}
for further explanations about these invariants, and we only mention here some of their properties:

\begin{itemize}
\item[(i)] the maps $z_{\skein}$ and $z_{\tskein}$ are monoid homomorphisms;
\item[(ii)] for any $n>0$,
the truncations
$z_{\skein}\colon \mathcal{H} \to \Q [[A+1]]/((A+1)^{n+1})$ and
$z_{\tskein}\colon \mathcal{H} \to \Q [\rho ] [[h]] / (h^{n+1})$
are finite-type invariants of degree~$2n$  
using the definition of \cite{GGP,Habiro00};
\item[(iii)] if $\xi \in \zeta_{\skein}^{-1} (F^{2n+1} \cskein (\Sigma))$, we have
$z_{\skein} (M_{\xi}) \in 1+ ((A+1)^n)$;
\item[(iv)]  if $\xi \in \zeta_{\tskein}^{-1} (F^{2n+1} \ctskein (\Sigma))$, we have
$z_{\tskein} (M_{\xi}) \in 1+ (h^n)$.
\end{itemize}
Note that the assumptions of (iii) and (iv) are satisfied if $\xi$ is in the $(2n-1)$st term of the lower central series of $\calI$.

There is a ring isomorphism  $\Q[[q-1]] \cong \Q[[A+1]]$ 
obtained by substituting $q$ with $A^4$. 
With this identification, we can regard the invariant $z_{\skein}$ as taking values in $\Q [[q-1]]$.
We expect that $z_{\skein}\colon \mathcal{H}\to \Q [[q-1]]$ equals the Ohtsuki series \cite{Ohtsuki},
which is the ``perturbative'' quantum invariant of integral homology $3$-spheres derived from the quantum group $U_q(\mathfrak{sl}_2)$.
We also expect that, for any integer $N$, 
the ``perturbative'' $\mathfrak{sl}_N$-quantum invariant \cite{Le00} of $M\in \mathcal{H}$ 
can be recovered from $z_{\tskein}(M)$
 by the substitution $\rho \mapsto (N/2) \log q /(-q^{1/2} + q^{-1/2})$ and $h \mapsto - q^{1/2} + q^{-1/2}$.


\begin{thebibliography}{00}


\bibitem{Birman1971}
J.\,  Birman,
\emph{On Siegel's modular groups}.
Math. Ann. {\bf 191} (1971), 59--68.


\bibitem{BH95}
G.\, Brumfiel and H.\, Hilden,
\emph{$\operatorname{SL}(2)$-representations of finitely presented groups}. 
Contemporary Mathematics, 187, Amer$.$ Math$.$ Soc$.$, Providence, RI, 1995. 

\bibitem{Bullock97}
D.\ Bullock,
\emph{Rings of $\mathrm{SL}_2 (\mathbb{C})$ and 
the Kauffman bracket skein module}.
Comment. Math. Helv. {\bf 72} (1997), no.~4, 521--542. 


\bibitem{Dehn38}
M.\, Dehn,
\emph{Die Gruppe der Abbildungsklassen}.
Acta Math.\, \textbf{69} (1938), no.~1, 135--206.

\bibitem{GGP}
S. Garoufalidis, M. Goussarov and M. Polyak,
\emph{Calculus of clovers and finite type invariants of 3-manifolds.} 
Geom$.$ Topol$.$ \textbf{5} (2001), 75--108.

\bibitem{GL05}
S.\, Garoufalidis and J.\, Levine,
\emph{Tree-level invariants of three-manifolds, Massey products and the Johnson homomorphism}.
In: \emph{Graphs and patterns in mathematics and theoretical physics}, volume 73 of Proc.\ Sympos.\ Pure Math., Amer.\ Math.\ Soc., Providence 2005, 173--203.

\bibitem{Gol86}
W.\, Goldman,
\emph{Invariant functions on Lie groups and Hamiltonian flows of surface group representations}.
Invent.\, Math.\, \textbf{85} (1986), no.~2, 263--302.


\bibitem{Goussarov98}
M. Goussarov,
\emph{Interdependent modifications of links and invariants of finite degree.} 
Topology \textbf{37} (1998), no.~3, 595--602. 



\bibitem{Habegger00}
N.\, Habegger,
\emph{Milnor, Johnson, and tree level perturbative invariants}. Preprint (2000).

\bibitem{HP}
N.\, Habegger and W.\, Pitsch,
\emph{Tree level Lie algebra structures of perturbative invariants}.
J. Knot Theory Ramifications \textbf{12} (2003), no.~3, 333--345. 

\bibitem{Habiro00}
K.\, Habiro,
\emph{Claspers and finite type invariants of links}.
Geom. Topol. \textbf{4} (2000), 1--83.

\bibitem{HM}
K.\, Habiro and G.\, Massuyeau,
\emph{From mapping class groups to monoids of homology cobordisms: a survey}. 
Handbook of Teichm\"uller theory, Vol$.$ III, 465--529. IRMA Lect. Math. Theor. Phys. \textbf{17}, Eur. Math. Soc., Z\"urich, 2012. 

\bibitem{Johnson79}
D.\, Johnson,
\emph{Homeomorphisms of a surface which act trivially on homology}.
Proc. Amer. Math. Soc. {\bf 75} (1979), no.~1, 119--125. 

\bibitem{Joh83}
D.\, Johnson, 
\emph{A survey of the Torelli group}.
In: Low-dimensional topology (San Francisco, Calif., 1981),  Contemp. Math. 20, Amer. Math. Soc., Providence 1983, 165--179.

\bibitem{KKgroupoid}
N.\, Kawazumi and Y.\, Kuno,
\emph{Groupoid-theoretical methods in the mapping class groups of surfaces}.
Preprint, arXiv:1109.6479v3

\bibitem{KK14}
N.\, Kawazumi and Y.\, Kuno,
\emph{The logarithms of Dehn twists}.
Quantum Topol.\, \textbf{5} (2014), no.~3, 347--423.

\bibitem{KK15}
N.\, Kawazumi and Y.\, Kuno,
\emph{Intersection of curves on surfaces and their applications to mapping class groups}.
Ann.\, Inst.\, Fourier (Grenoble) \textbf{65} (2015), no.~6, 2711-2762.

\bibitem{KK16}
N.\, Kawazumi and Y.\, Kuno,
\emph{The Goldman-Turaev Lie bialgebra and the Johnson homomorphisms}.
Handbook of Teichm\"{u}ller theory, Vol.~V, 97--165,
IRMA Lect.\, Math.\, Theor.\, Phys.\, \textbf{26},
Eur.\, Math.\, Soc., Z\"{u}rich, 2016.

\bibitem{Ki78}
R.\, Kirby, 
\emph{A calculus for framed links in $S^3$}. 
Invent. Math. \textbf{45} (1978), no.~1, 35--56. 

\bibitem{KS09}
M.\, Korkmaz and A.\, Stipsicz,
\emph{Lefschetz fibrations on $4$-manifolds}.
Handbook of Teichm\"{u}ller theory, Vol.~II, 271--296, IRMA Lect.\, Math.\, Theor.\, Phys.\, \textbf{13},
Eur.\, Math.\, Soc., Z\"{u}rich, 2009.

\bibitem{Ku12}
Y.\, Kuno,
\emph{A combinatorial construction of symplectic expansions}.
Proc.\ Amer.\ Math.\ Soc. \textbf{140} (2012), no.~3, 1075--1083.


\bibitem{Kuno13}
Y.\, Kuno,
\emph{The generalized Dehn twist along a figure eight}.
J.\, Topol.\, Anal.\, \textbf{5} (2013), no.~3, 271--295.

\bibitem{KM19}
Y.\, Kuno and G.\, Massuyeau,
\emph{Generalized Dehn twists on surfaces and homology cylinders}.
Preprint, arXiv:1902.02592v1.

\bibitem{Le00}
T.\, Le,
\emph{On perturbative $PSU(n)$ invariants of rational homology 3-spheres}.
Topology {\bf 39} (2000), no.~4, 813--849. 

\bibitem{Lic62}
W.\, Lickorish,
\emph{A representation of orientable combinatorial $3$-manifolds}.
Ann.\, of Math.\, (2) \textbf{76} (1962), 531--540.

\bibitem{Ma12}
G.\, Massuyeau,
\emph{Infinitesimal Morita homomorphisms and the tree-level of the LMO invariant}.
Bull. Soc. Math. France \textbf{140} (2012), no.~1, 101--161.

\bibitem{MT13}
G.\, Massuyeau and V.\, Turaev,
\emph{Fox pairings and generalized Dehn twists}.
Ann.\, Inst.\, Fourier (Grenoble) \textbf{63} (2013), no.~6, 2403--2456.


\bibitem{MT14}
G.\, Massuyeau and V.\, Turaev,
\emph{Quasi-Poisson structures on representation spaces of surfaces}. 
Int.\, Math.\, Res.\, Not.\, \textbf{2014:1} (2014) 1--64.

\bibitem{MT17}
G.\, Massuyeau and V.\, Turaev,
\emph{Brackets in the Pontryagin algebras of manifolds}.
M\'em. Soc. Math. France {\bf 154} (2017).
 
\bibitem{MT18}
G.\, Massuyeau and V.\, Turaev,
\emph{Brackets in representation algebras of Hopf algebras}.
J. Noncomm. Geom. {\bf 12} (2018), no.~2, 577--636.

\bibitem{Mo89}
S.\, Morita,
\emph{Casson's invariant for homology 3-spheres and characteristic classes of surface bundles. I}.
Topology {\bf 28} (1989), no.~3, 305--323.

\bibitem{Mo93}
S.\,  Morita, 
\emph{Abelian  quotients  of  subgroups  of  the  mapping  class  group  of  surfaces}.
Duke Math. J. {\bf 70} (1993), 699--726.

\bibitem{Muller}
G.\, Muller,
\emph{Skein algebra and cluster algebras of marked surfaces}.
Quantum Topol. {\bf 7} (2016), no.~3, 435--503.

\bibitem{Ohtsuki}
T.\, Ohtsuki,
\emph{A polynomial invariant of integral homology $3$-spheres}.
Math. Proc. Cambridge Philos. Soc. {\bf 117} (1995), no.~1, 83--112. 

\bibitem{Papa75}
C.\,  Papakyriakopoulos,
\emph{Planar regular coverings of orientable closed surface}.
Knots, groups, and $3$-manifolds, 
261--292. Ann.\, of Math.\ Studies, No.~84, Princeton Univ.\, Press, Princeton, N.J., 1975.

\bibitem{Per06}
B.\, Perron,
\emph{A homotopic intersection theory on surfaces:  applications to mapping class group and braids}.
Enseign. Math. (2) \textbf{52} (2006), no.~1--2, 159--186.

\bibitem{Powell1978}
J.\, Powell,
\emph{Two theorems on the mapping class group of a surface}.
Proc. Amer. Math. Soc. {\bf 68} (1978), no.~3, 347--350.

\bibitem{Przy91}
J.\, Przytycki,
\emph{Skein modules of $3$-manifolds}.
Bull. Polish Acad. Sci. Math. {\bf 39} (1991), no.~1-2, 91--100.

\bibitem{PS00}
J.\, Przytycki and A.\, Sikora,
\emph{On skein algebras and $\operatorname{SL}_2(\mathbb{C})$-character varieties}. 
Topology {\bf 39} (2000), no.~1, 115--148.

\bibitem{Pu2008}
A.\, Putman,
\emph{An infinite presentation of the Torelli group}.
Geom. Funct. Anal. {\bf 19} (2009), no.~2, 591--643.

\bibitem{Qui69}
D.\, Quillen, 
\emph{Rational homotopy theory}. 
Ann. of Math. (2) {\bf 90} (1969), 205--295.


\bibitem{Sta65}
J.\, Stallings,
\emph{Homology and central series of groups}.
J.\ Algebra \textbf{2} (1965), 170--181.


\bibitem{TsujiCSAI}
S.\, Tsuji,
\textit{Dehn twists on Kauffman bracket skein algebras}.
Kodai Math. J. \textbf{41} (2018), 16--41.


\bibitem{TsujiCSApure}
S.\, Tsuji,
\emph{The quotient of a Kauffman bracket skein algebra by the square of an augmentation ideal}.
J. Knot Theory Ramifications \textbf{26} (2017), no.~5, 1750030, 34 pp.

\bibitem{TsujiTorelli}
S.\, Tsuji,
\textit{An action of the Torelli group on the Kauffman bracket skein module}.
Math. Proc. Camb. Philos. Soc. \textbf{165} (2017), no$.$1, 163--178.


\bibitem{Tsujihom3}
S.\, Tsuji,
\textit{Construction of an invariant for integral homology 3-spheres via completed Kauffman bracket skein algebras}.
Preprint, arXiv:1607.01580v4.


\bibitem{TsujiHOMFLY-PTskein}
S.\, Tsuji,
\textit{A formula for the action of Dehn twists on HOMFLY-PT
 skein modules and its applications}.
Preprint, arXiv:1801.00580v1.


\bibitem{Tur78}
V.\,  Turaev,
\emph{Intersections of loops in two-dimensional manifolds} (Russian).
Mat.\, Sb.\, \textbf{106(148)} (1978), no.~4, 566--588.

\bibitem{Tur81}
V.\,  Turaev,
\emph{Multiplace generalizations of the Seifert form of a classical knot} (Russian).
Mat. Sb. {\bf 116(158)} (1981), 370--397. 

\bibitem{Tur91}
V.\,  Turaev,
\emph{Skein quantization of Poisson algebras of loops on surfaces}.
 Ann. Sci. \'Ecole Norm. Sup. (4) \textbf{24} (1991), no.~6, 635--704.

\bibitem{VdB08}
M.\, Van den Bergh,
\emph{Double Poisson algebras}.
Trans. Amer. Math. Soc. {\bf 360} (2008), no.~11, 5711--5769.


\end{thebibliography}
\end{document}